\newtheorem{theorem}{Theorem}
\newtheorem{corollary}[theorem]{Corollary}
\newtheorem{lemma}[theorem]{Lemma}
\newtheorem{proposition}[theorem]{Proposition}
\newtheorem{observation}[theorem]{Observation}
\newtheorem{claim}[theorem]{Claim}
\theoremstyle{definition}
\newtheorem{definition}[theorem]{Definition}
\crefname{claim}{claim}{claims}
\crefname{observation}{observation}{observations}
\newcommand{\erdos}{Erd\H{o}s\xspace}
\newcommand{\renyi}{Rényi\xspace}
\newcommand{\lovasz}{Lov\'{a}sz\xspace}
\newcommand{\turan}{Tur\'{a}n\xspace}
\newcommand{\iu}{{i\mkern1mu}}
\newcommand{\E}{\ensuremath{\mathbb{E}}}
\renewcommand{\epsilon}{\varepsilon}
\newcommand{\eps}{\epsilon}
\newcommand{\N}{\ensuremath{\mathbb{N}}}
\newcommand{\calD}{\ensuremath{\mathcal{D}}}
\newcommand{\calG}{\ensuremath{\mathcal{G}}}
\title{On Connectivity in Random Graph Models with Limited Dependencies}
\let\anonymous\undefined %
    \author{Anonymized}
    \affil{Affiliation\\ \texttt{email@adre.ss}}
\author[1]{Johannes~Lengler}
\author[1]{Anders~Martinsson}
\author[1]{Kalina~Petrova}
\author[1]{Patrick~Schnider}
\author[1]{Raphael~Steiner}
\author[1]{Simon~Weber}
\author[1]{Emo~Welzl}
\affil[1]{Department of Computer Science, ETH Z\"urich}
\date{}
\begin{document}

\maketitle

\begin{abstract}
    For any positive edge density $p$, a random graph in the \erdos-\renyi{} $G_{n,p}$ model is connected with non-zero probability, since all edges are mutually independent. We consider random graph models in which edges that do not share endpoints are independent while incident edges may be dependent and ask: what is the minimum probability $\rho(n)$, such that for any distribution $\mathcal{G}$ (in this model) on graphs with $n$ vertices in which each potential edge has a marginal probability of being present at least $\rho(n)$, a graph drawn from $\mathcal{G}$ is connected with non-zero probability?
    
     As it turns out, the condition ``edges that do not share endpoints are independent'' needs to be clarified and the answer to the question above is sensitive to the specification. In fact, we formalize this intuitive description into a strict hierarchy of five independence conditions, which we show to have at least three different behaviors for the threshold $\rho(n)$. For each condition, we provide upper and lower bounds for $\rho(n)$. In the strongest condition, the \emph{coloring model} (which includes, e.g., random geometric graphs), we show that $\rho(n)\rightarrow 2-\phi\approx 0.38$ for $n\rightarrow\infty$, proving a conjecture by Badakhshian, Falgas-Ravry, and Sharifzadeh. This separates the coloring models from the weaker independence conditions we consider, as there we prove that $\rho(n)>0.5-o(n)$.
     In stark contrast to the coloring model, for our weakest independence condition --- pairwise independence of non-adjacent edges --- we show that $\rho(n)$ lies within $O(1/n^2)$ of the threshold $1-2/n$ for completely arbitrary distributions.
\end{abstract}

\vfill
{
\small
\paragraph{Acknowledgments.}
\ifdefined\anonymous

\else
This research started at the joint workshop of the \emph{Combinatorial Structures and Algorithms} and \emph{Theory of Combinatorial Algorithms} groups of ETH Z\"urich held in Stels, Switzerland, January 2023.
We thank the organizers for providing a very pleasant and inspiring working atmosphere. K.P. was supported by grant no. CRSII5 173721 of the Swiss National Science Foundation. R.S. was supported by an ETH Z\"urich Postdoctoral Fellowship. S.W. was supported by the Swiss National Science Foundation under project no. 204320. Last but not least, we would like to thank Victor Falgas-Ravry for pointing out an abundance of related work, thus helping tremendously to put our results in perspective.
\fi
}

\newpage
\section{Introduction}

The probabilistic method is an important tool in theoretical computer science, graph theory and combinatorics~\cite{alon2016probabilistic}. With this method, one proves that a random construction has some desirable property with positive probability, and concludes that some objects with this property must exist. Early examples of the probabilistic method include simple constructions of expander graphs~\cite{BOLLOBAS1988241}, or graphs with high girth and large chromatic number~\cite{erdos_1959}. Often, it is possible to express the property as the intersection of very simple events $A_1,\ldots,A_k$. For example, we can express the property that a set $S$ is a clique as the intersection of the events ``$u$ is adjacent to $v$'' for all pairs $u,v\in S$. In many cases it is clear that each of the $A_i$ has a large probability to occur, but this is generally not enough to conclude that the intersection of all $A_i$ has positive probability to occur. Of course, if the probability space were a \emph{product space} in which all components are \emph{independent} of each other, then this would be trivial.

However, for many applications it is too limiting to restrict oneself to product spaces. The probabilistic method was significantly extended by the realization that it could still be applied to settings without perfect independence, provided there is some bound on the amount of dependence in the system. The seminal result was the Lov\'asz Local Lemma (LLL), which we give here in the slightly stronger version due to Shearer~\cite{shearer1985problem}: for events $A_1,\ldots,A_k$ that all occur with probability at least $p$, if each of the $A_i$ depends on at most $d$ other $A_j$, where $p \ge 1-1/(ed)$, then there is a positive probability that all of the $A_i$ occur simultaneously.\footnote{\label{footnote:Shearer}The precise condition is $p > 1-\frac{(d-1)^{(d-1)}}{d^d}$ for $d>1$ and $p> 1/2$ for $d=1$. The first expression can be simplified by the estimate $\frac{(d-1)^{(d-1)}}{d^{d-1}} \ge e^{-1}$, which becomes tight in the limit $d \to \infty$.} In this context, the dependencies are captured by a \emph{dependency graph}, a graph with vertex set $\{A_1,\ldots,A_k\}$ such that each vertex is independent from all but its neighbors. There are many situations in which the dependency graph can be restricted, and we give some examples in Section~\ref{sec:bounded-dependency-graphs} below.

The LLL allows to re-introduce a product space through the backdoor, because the LLL condition $p \ge 1-1/(ed)$ allows to couple the process to a product space. This was already implicit in the inductive proof of the LLL~\cite{spencer1977asymptotic}, and was made explicit by Liggett, Schonmann and Spacey~\cite{liggett1997domination}. They also generalized this coupling to a countably infinite number of variables and showed tightness of Shearer's condition (of the precise version in footnote~\ref{footnote:Shearer}). For a finite number of variables, coupling the probability space to a product space implies trivially that the all-one event has positive probability.

Unfortunately, the LLL scales badly in $d$, i.e., $p$ needs to be very close to one if $d$ is large. However, in some cases the degree $d$ may grow. In particular, in this paper we will study the situation that the variables are associated with the edges of a complete graph on $n$ vertices, and dependencies only run between adjacent edges (edges which share a common endpoint). This is a common situation, and examples are given in Section~\ref{sec:bounded-dependency-graphs}. In this case, the number of dependencies per edge is $2(n-2)$, and thus grows with $n$. Thus, if every edge is present with constant probability $p<1$, then it is not possible to couple the probability space with a product space for large $n$, and it is not true that the complete graph appears with positive probability~\cite{liggett1997domination}. However, in this paper we will show that some weaker global properties \emph{can} be guaranteed. Specifically, we will focus on connectivity because this is arguably the most fundamental global graph property. We will study the question:\medskip

\parbox{0.92\textwidth}{
Consider a random graph in which every potential edge is inserted with probability at least $p$. Assume that non-adjacent edges are independent. For which values of $p$ can we guarantee that the graph is connected with positive probability? 
}\medskip

\noindent It turns out that the answer depends heavily on what exactly we mean by independence. Surprisingly, there are at least five different ways to interpret the innocent-looking condition that non-adjacent edges are independent, which we define next in order of increasing strength. For an edge $e=\{u,v\}$, let $X_e$ be the event that $e$ is present in the random graph.

\begin{definition}[Pairwise independence]\label{def:pairwise}
For every pair of non-adjacent edges $e,f$, the random variables $X_e$ and $X_f$ are independent.
\end{definition}

\begin{definition}[Matching independence]\label{def:matching}
    For any set $M$ of pairwise non-adjacent edges (also called a \emph{matching}), $\{X_e\mid e\in M\}$ are mutually independent.
\end{definition}

\begin{definition}[Edge-subgraph independence]\label{def:edge-subgraph} For any edge $e=\{u,v\}$, any vertex set $W$ with $u,v \notin W$, and any choice of graph $H_W$ on $W$, we have for the random graph $G$ sampled from the distribution that \[Pr[G[W] = H_W \textrm{ and } X_e] = Pr[G[W]=H_W]\cdot Pr[X_e].\]
\end{definition}

\begin{definition}[Subgraph independence]\label{def:subgraph}
    For any two disjoint subsets $V,W$ of vertices, and any choice of graphs $H_V$ and $H_W$ on $V$ and $W$, respectively, we have for the random graph $G$ sampled from the dsitribution that
    \[Pr[G[V]=H_V\textrm{ and } G[W]=H_W] = Pr[G[V]=H_V]\cdot Pr[G[W]=H_W].\]
\end{definition}

\begin{definition}[Coloring model]\label{def:coloring}
The graph distribution is given by a set of probability spaces $(\Omega_v,Pr_v)$, one for each vertex $v\in V$, and a set of deterministic functions $f_{u,v}:\Omega_u\times \Omega_v\rightarrow \{0,1\}$ computing $X_{\{u,v\}}$, one for each edge $\{u,v\}\in \binom{V}{2}$. If every probability space $(\Omega_v,Pr_v)$ is finite, we call $\max_v|\Omega_v|$ the \emph{number of colors} of the coloring model.
\end{definition}

We remark that the standard proof of LLL requires edge-subgraph independence. In the paper~\cite{liggett1997domination} that makes the coupling to product spaces explicit, the authors describe subgraph independence as the required property, but inspecting the proof shows that they only use the weaker condition of edge-subgraph independence.

\begin{table}[bp]
    \centering
    \begin{tabular}{l|c|c}
        Independence condition & Lower bound & Upper bound \\
        \hline 
        Coloring model (2 colors) & $1/4$ & $1/4$ \\
        Coloring model (general) & $0.381966\ldots$ & $0.381966\ldots$ \\
        Subgraph independence & $1/2$~\cite[Thm. 16]{day2020oneindependent} & $1/2$~\cite[Thm. 16]{day2020oneindependent} \\
        Edge-subgraph independence & $1/2$ & $3/4$ \\
        Matching independence & $1/2$ & $1$ \\
        Pairwise independence & $1$ & $1$
    \end{tabular}
    \caption{Lower and upper bounds for the threshold $\rho$ s.t. every constant edge probability $>\rho$ guarantees connectivity with positive probability for all sufficiently large $n$, while a constant edge probability $<\rho$ does not guarantee connectivity with positive probability for infinitely many $n$.}
    \label{tab:bounds-simple}
\end{table}

Note that some of our models have been previously studied under different names. We summarize the bounds on the connectivity thresholds obtained before or simultaneously to this paper in~\Cref{fig:rho_intervals_before}, and discuss this related work in more detail in \Cref{sec:bounded-dependency-graphs}. Our main results for large~$n$ are summarized in Table~\ref{tab:bounds-simple} and \Cref{fig:rho_intervals}. We give a refined exposition of our results for \emph{all} $n$ in \Cref{thm:main}. We do not have matching upper and lower bounds in all cases, but as can be seen in \Cref{fig:rho_intervals}, our bounds imply that there are at least three different thresholds among the five independence conditions discussed above, and four different thresholds if we also include the coloring models with only $2$ colors. Notably, our proofs give different intervals for the thresholds of all six independence conditions, although this does not imply that the thresholds are all different. 

\begin{figure}[ht]
    \centering
    \includegraphics{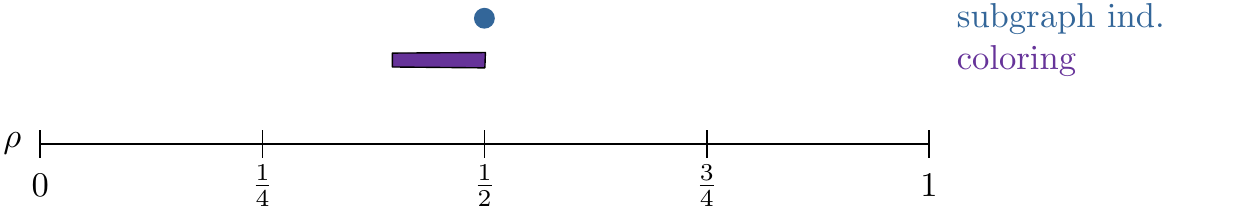}
    \caption{An illustration of the upper and lower bounds on the thresholds known from related work. The bounds for subgraph independence can be found in \cite[Thm. 16]{day2020oneindependent}. The bounds on the coloring model have been obtained simultaneously and independently to ours~\cite[Thm. 1.7]{badakhshian2023density}.}
    \label{fig:rho_intervals_before}
\end{figure}

\begin{figure}[ht]
    \centering
    \includegraphics{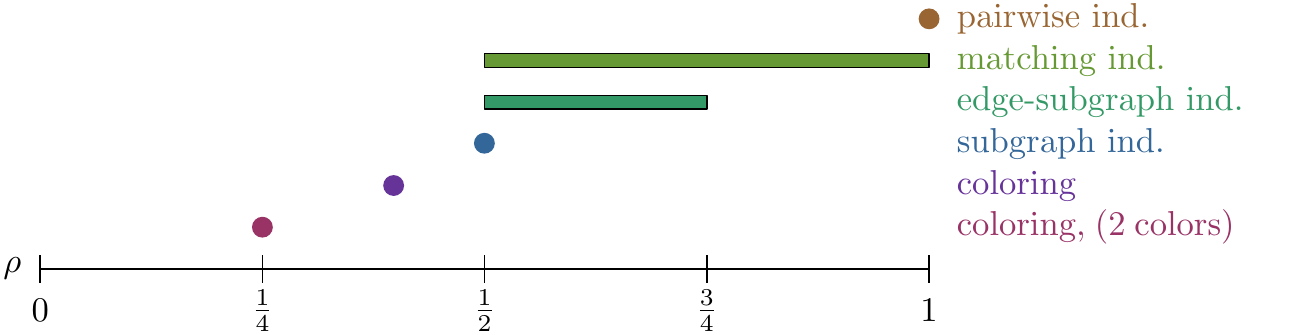}
    \caption{An illustration of the upper and lower bounds mentioned in \Cref{tab:bounds-simple} and \Cref{thm:main}. As can be seen, there must be at least four thresholds $\rho$ among the six independence conditions.}
    \label{fig:rho_intervals}
\end{figure}

Moreover, let us write $\calG_{\text{pw}}(n,p)$ for the class of all random graph distributions on $n$ vertices, with marginal edge probabilities at least $p$ and with the property ``pairwise independence''. We write $\calG_{\text{pw}} := \bigcup_{n\in \N} \calG_{\text{pw}}(n,0)$ for the same class without restrictions on the number of vertices and the marginal probabilities. Likewise, we write $\calG_{\text{mat}}$, $\calG_{\text{esub}}$, $\calG_{\text{sub}}$ and $\calG_{\text{col}}$ for the random graph distributions satisfying matching independence, edge-subgraph independence, subgraph independence and the coloring models respectively. We show that those five models form a strict hierarchy.

\begin{restatable}{theorem}{hierarchy}\label{thm:hierarchy}
$\calG_{\text{col}} \subsetneq \calG_{\text{sub}} \subsetneq \calG_{\text{esub}} \subsetneq \calG_{\text{mat}} \subsetneq \calG_{\text{pw}}$.
\end{restatable}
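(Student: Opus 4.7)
The plan is to establish the four inclusions in the chain from the definitions, and then for each step to exhibit a small distribution that lies in the larger class but fails the stronger condition. Three of the four inclusions are essentially immediate: for coloring $\subseteq$ subgraph, the colors on disjoint vertex sets $V$ and $W$ are independent random variables, hence so are their deterministic images $G[V]$ and $G[W]$; for subgraph $\subseteq$ edge-subgraph, specialize to $V=\{u,v\}$, so that $G[V]$ encodes exactly $X_e$; for matching $\subseteq$ pairwise, take the matching of size two. The one inclusion requiring a short argument is edge-subgraph $\subseteq$ matching, which I prove by induction on $|M|$: writing $W$ for the set of vertices covered by $e_2,\dots,e_k$, edge-subgraph independence makes $X_{e_1}$ independent of $G[W]$ and hence of the tuple $(X_{e_2},\dots,X_{e_k})$; combining with the inductive hypothesis applied to the sub-matching $\{e_2,\dots,e_k\}$ yields mutual independence of all $k$ variables.

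For the four strict separations I will use XOR-type gadgets, with all edges outside the gadget set deterministically to zero so that conditions involving them are vacuous. For pairwise $\not\Rightarrow$ matching, on $n=6$ take a perfect matching $e_1,e_2,e_3$ with $X_{e_1},X_{e_2}$ independent fair coins and $X_{e_3}:=X_{e_1}\oplus X_{e_2}$; the three are pairwise independent but their parity is identically zero. For matching $\not\Rightarrow$ edge-subgraph, on $n=5$ put a triangle on $\{3,4,5\}$ with independent fair-coin edges $A,B,C$ and set $X_{\{1,2\}}:=A\oplus B\oplus C$. Since $K_5$ has no matching of size three, matching independence reduces to pairwise independence, which is easily verified; but $X_{\{1,2\}}$ is determined by $G[\{3,4,5\}]$, so edge-subgraph independence fails. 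For edge-subgraph $\not\Rightarrow$ subgraph, on $n=6$ put disjoint triangles on $V=\{1,2,3\}$ and $W=\{4,5,6\}$; let $s$ be a hidden uniform bit and sample independent fair coins $X_{12},X_{13},Y_{45},Y_{46}$, then set $X_{23}:=X_{12}\oplus X_{13}\oplus s$ and $Y_{56}:=Y_{45}\oplus Y_{46}\oplus s$. The parities of both triangles equal $s$, so $G[V]$ and $G[W]$ are dependent; a case-by-case check (conditioning on $s$ decouples the two sides into independent factors) confirms that every single edge variable is still independent of the induced subgraph on any disjoint vertex set.

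For the final separation, subgraph $\not\Rightarrow$ coloring, observe that subgraph independence is vacuous on $n=3$ vertices (any two disjoint vertex subsets have at most one vertex each, and thus carry no edges). So I take the distribution on the triangle in which all three edges are simultaneously present or simultaneously absent, each with probability $1/2$. Suppose for contradiction it arose from a coloring model with independent colors $c_1,c_2,c_3$ and edge functions $f_{ij}$. Then conditioning on $c_1$ would force $f_{12}(c_1,c_2)=f_{13}(c_1,c_3)$ almost surely; since one side is a function only of $c_2$ and the other only of $c_3$, and $c_2,c_3$ remain independent under this conditioning, each must be almost surely constant in its free variable. Hence $X_{12}$ is a function only of $c_1$. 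By the symmetric argument conditioning on $c_2$, it is also a function only of $c_2$; independence of $c_1$ and $c_2$ then forces $X_{12}$ to be almost surely constant, contradicting $\Pr[X_{12}=1]=1/2$.

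The main obstacle will be this last separation, since coloring models admit arbitrary (possibly continuous) color spaces and one must rule out \emph{every} such realization rather than just a parametric family. The conditional-independence argument sketched above handles this uniformly and is the only step that goes beyond elementary manipulations of XOR gadgets.
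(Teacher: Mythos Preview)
Your proof is correct. The four inclusions are argued just as in the paper (and your inductive argument for edge-subgraph $\Rightarrow$ matching is essentially the paper's law-of-total-probability computation), but your separating examples are all different from the ones the paper uses.

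The paper separates the classes with ``structural'' random-graph constructions: a uniformly random one-edge graph on three vertices for $\calG_{\text{col}}\subsetneq\calG_{\text{sub}}$; a $1/2$--$1/2$ mixture of $G_{n,1/2}$ and a random two-clique partition for $\calG_{\text{sub}}\subsetneq\calG_{\text{esub}}$; a ``clique on the even-parity color class'' model for $\calG_{\text{esub}}\subsetneq\calG_{\text{mat}}$; and the clique/matching mixture $CM(n,q)$ for $\calG_{\text{mat}}\subsetneq\calG_{\text{pw}}$. Some of these do double duty later in the paper (e.g.\ $CM(n,q)$ gives the lower bound on $\rho_{\text{pw}}$), which is why they are more elaborate. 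Your XOR gadgets are leaner and more transparent for the pure separation task: the parity trick that forces pairwise-but-not-mutual independence, the triangle-parity trick on $K_5$ exploiting that the maximum matching has size two, and the shared hidden bit $s$ linking the parities of two disjoint triangles are each close to minimal witnesses. For the last step, your ``all-or-nothing triangle'' with the conditional-independence argument (if $f_{12}(c_1,c_2)=f_{13}(c_1,c_3)$ a.s.\ for independent $c_2,c_3$ then both sides are a.s.\ constant given $c_1$, whence $X_{12}$ is a function of $c_1$ alone, and symmetrically of $c_2$ alone, forcing it to be constant) is a clean alternative to the paper's combinatorial case analysis of the one-random-edge triangle; it has the nice feature of working uniformly over arbitrary color spaces without appealing to any finiteness reduction.
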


\subsection{Related Work and Examples}\label{sec:bounded-dependency-graphs}
There are numerous applications in graph theory, theoretical computer science, and combinatorics in which dependency graphs are bounded or otherwise restricted. For example, if we want to find a vertex coloring in a hypergraph without monochromatic hyperedges, and we color the vertices independently, then any two disjoint hyperedges are independent. Therefore, often times LLL type arguments apply to coloring (hyper)graphs of bounded maximum degree. For instance, the seminal results by Johansson~\cite{johansson} and Molloy~\cite{molloy} stating that every graph of maxmimum degree $\Delta$ has (list)
chromatic number $O(\frac{\Delta}{\log \Delta})$ were proved using this method. Similarly, if we want to find a satisfying assignment to a SAT-formula in conjunctive normal form, and assign the variables independently, then any two clauses are independently satisfied if they do not share a common literal, see for instance~\cite{Gebauer2009} for more detail and background. These are classical applications of LLL and fall into the class of coloring models.

Another important source of applications comes from percolation theory, which was also the main motivation for~\cite{liggett1997domination}.
There, an important technique to study locally generated geometric graphs is \emph{rescaling}: the geometric space is covered by boxes that may partially overlap, and boxes are called \emph{good} under some conditions that depend only on the subgraph induced by the box. It is shown recursively that the probability for a box to be good increases with the box size. The goodness of two boxes is independent if they don't overlap, and typically every box only overlaps with constantly many other boxes. 

So far, we gave general examples of bounded dependence. We now turn to examples where the random variables are naturally tied to the edge set of a complete graph. An important class of examples come from random graph models that are more complex than Erd\H os-R\'enyi graphs, in particular models that capture graph properties of real-world networks like clustering, communities, or heavy-tailed degree distributions. Many such models are generated by drawing some information for each vertex, and connecting two vertices based on this information. There is a large number of such random graph models: in Chung-Lu or Norros-Reittu random graphs, the vertices draw weights which determine their expected degrees~\cite{chung2002connected,reittu2012random}; in Random Geometric Graphs (RGG) or Hyperbolic Random Graphs (HRG) the vertices draw some location in a geometric space~\cite{krioukov2010hyperbolic,penrose2003random}; in scale-free percolation (SFP) the vertices lie on a grid and also draw random weights~\cite{deijfen2013scale}; in Geometric Inhomogeneous Random Graphs (GIRG), they draw both a position and a weight~\cite{bringmann2019geometric}. In the Stochastic Block Model (SBM) they draw the community to which they should belong~\cite{karrer2011stochastic}. Many more applications of similar flavour that cannot all be listed here can be found in the literature. All these models fall in the class of coloring models.\footnote{The formulation of many models involves a coin flip for each edge, where the probability of inclusion is a function of the information of the two endpoints. In this formulation, the event $X_e$ is not a \emph{deterministic} function of the information. But one can simply define the coin flip as part of the random experiments of one of the endpoints.} 

The class of coloring models is the most important one.\footnote{But note that it is not the model that arises naturally for the LLL. This raises the question whether stronger LLL-type results can be obtained for the coloring model.} Models that do not fall into this class can arise when the graphs drawn from the distribution must fulfill some global property. For example, consider the following distribution for an \emph{odd} number of vertices $n\geq 5$. Every vertex chooses a color from red and blue independently and uniformly at random. The resulting graph consists only of a clique on the color chosen an even number of times. We show later (in the proof of \Cref{lem:esub_mat_inclusion}) that this distribution is matching independent, but not edge-subgraph independent (and thus also not a coloring model). As a second example, consider the \erdos-\renyi model $G_{n,1/2}$ with the additional side constraint that the total number of edges $|E|$ is divisible by three.\footnote{More formally, we consider the conditional probability distribution obtained by conditioning the $G_{n,1/2}$ distribution on the event that $|E|\equiv_3 0$.} One can show quite easily that this distribution is not even pairwise independent. On the other hand, if we instead use the side constraint that $|E|$ is divisible by \emph{two}, the distribution is actually a coloring model, indicating that independence conditions are surprisingly fickle. A proof of both of these facts can be found in \Cref{app:GnpSideConstraints}.

Two of our independence conditions have been previously considered under different names. Firstly, subgraph independent distributions have been studied under the name of \emph{$1$-independent} random graphs. More generally, a $k$-independent graph distribution is a distribution where any two sets of edges $E,F$ are independent if the minimum distance between any vertex incident to an edge in $E$ and any vertex incident to an edge in $F$ is at least~$k$. These $k$-independent graph distributions have been studied extensively in the context of percolation theory, for example on the infinite integer grid~\cite{balisterBollobas}. In~\cite{day2020oneindependent}, Day, Falgas-Ravry, and Hancock consider $1$-independent random \emph{finite} graphs. Among other results, they pin down the precise value of the connectivity threshold for $1$-independent (i.e., subgraph independent) random graphs, as mentioned above in \Cref{tab:bounds-simple} and \Cref{fig:rho_intervals_before}.

Coloring models have been studied under two different names. First off, they have been studied as \emph{vertex-based measures} in the context of percolation in \cite{day2020oneindependent}. Second off, a deterministic perspective on coloring models lies at the core of the well-studied \emph{density \turan problem for multipartite graphs}~\cite{balister2022improved,bondy2006density,csikvari,falgas2021,nagy2011multipartite}: instead of a random graph distribution, the density \turan problem for multipartite graphs considers $|V(G)|$-partite blow-ups $G^*$ of a graph $G$, and the minimum density needed between parts of $G^*$ to guarantee the existence of a graph $H$ as a \emph{transversal}, i.e., as a subgraph of~$G^*$ that picks precisely one vertex per part of $G^*$. This problem has also been considered for the more general case where we are not considering the occurrence of a single graph $H$ as a transversal, but of any graph from some collection $\mathcal{H}$. Furthermore, one can consider the \emph{weighted} case, where the vertices in each part of $G^*$ each get a positive weight such that all the weights per part add up to $1$. The density between two parts is then also computed in a weighted fashion. Now, considering $G$ as the complete graph $K_n$, and $\mathcal{H}$ as the family of spanning trees on $n$ vertices, one can see that the weighted density \turan problem for multipartite graphs is equivalent to the problem of determining the minimum edge probability needed to guarantee connectivity with non-zero probability in coloring models (except that the weighted density \turan setting only allows for modeling finitely many colors, which turns out to be without loss of generality, as we discuss in the following subsection). In~\cite{badakhshian2023density}, which appeared simultaneously and independently from this paper, Badakhshian, Falgas-Ravry, and Sharifzadeh consider exactly this question; they prove the same lower bound as us~\cite[Thm.~1.7]{badakhshian2023density}, and propose our upper bound as a conjecture~\cite[Conj.~1.9]{badakhshian2023density}. 

Last but not least, let us mention that in a conceptually similar (but concretely rather different) direction, Alon and Nussboim~\cite{alonKwise} studied thresholds for the connectivity of random graph models in which only edge-sets of size at most $k$ for a fixed parameter $k$ are required to be independent, but dependencies between edge-sets may occur from size $k+1$ and up. 

\subsection{Detailed Results}\label{sec:detailed-results}
In this section we give the main theorem with more detailed results.
We denote by
\[\rho_{\text{pw}}(n) := \inf\{p\in[0,1] \mid  \forall \calD \in \calG_{\text{pw}}(n,p), \Pr[G_n \sim \calD \text{ is connected}] > 0\}
\] the smallest (infimum) marginal edge probability that guarantees a positive probability for $G_n$ being connected. The quantities $\rho_{\text{mat}}(n)$, $\rho_{\text{esub}}(n)$, $\rho_{\text{sub}}(n)$, and $\rho_{\text{col}}(n)$ are defined similarly. Furthermore, let $\calG_{\text{col},k}$ denote the coloring models with at most $k$ colors, and let $\rho_{\text{col},k}(n)$ be the corresponding threshold. %

\begin{theorem}\label{thm:main}
We have the following bounds on $\rho$:
\begin{align*}
    \intertext{\textbf{Pairwise independence:} For all $n\in \N$,}
    1-2/n -\Theta(1/n^2)\le\; &\rho_{\text{pw}}(n) \le 1-2/n. \\
    \intertext{\textbf{Matching independence:} For all $n\in \N$, }
    \frac{1}{2}(1-\tan^2 \frac{\pi}{2n})\le\; &\rho_{\text{mat}}(n) \le 1-2/n. \\
    \intertext{
    Moreover, for any $k \in \mathbb{N}$,
    }
    \frac{1}{2}\le\; &\rho_{mat}(8k).\\
    \intertext{\textbf{Edge-subgraph independence:} For all $n\in \N$, }
    \frac{1}{2}(1-\tan^2 \frac{\pi}{2n})\le\; &\rho_{\text{esub}}(n) \le 3/4. \\
    \intertext{\textbf{Subgraph independence:} \cite[Thm 16]{day2020oneindependent} For all $n \ge 2$, }
    &\rho_{\text{sub}}(n)=\frac{1}{2}(1-\tan^2 \frac{\pi}{2n}).
    \intertext{\textbf{Coloring model:} For all $n\geq 3$, we have $1/4\le \rho_{\text{col}}(n) \le 1/2$. Moreover, let $\phi = \tfrac{1}{2}(1+\sqrt{5})$ be the golden ratio. Then }
    \lim_{n\to\infty}&\rho_{\text{col}}(n) = 2-\phi \approx 0.381966. \\
    \intertext{\textbf{Coloring model, two colors:} For all $n\geq 3$, $\rho_{\text{col},2}(n) = 1/4$.}
\end{align*}
\end{theorem}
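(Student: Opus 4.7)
The plan is to exploit the hierarchy $\calG_{\text{col}}\subsetneq\calG_{\text{sub}}\subsetneq\calG_{\text{esub}}\subsetneq\calG_{\text{mat}}\subsetneq\calG_{\text{pw}}$ from \Cref{thm:hierarchy}, which yields the monotonicity $\rho_{\text{col}}\le\rho_{\text{sub}}\le\rho_{\text{esub}}\le\rho_{\text{mat}}\le\rho_{\text{pw}}$, so that many bounds come for free. In particular, the tight subgraph value $\frac{1}{2}(1-\tan^2(\pi/(2n)))$ of \cite[Thm.~16]{day2020oneindependent} is automatically a lower bound for $\rho_{\text{esub}}$ and $\rho_{\text{mat}}$; once we prove $\rho_{\text{pw}}(n)\le 1-2/n$, it becomes an upper bound for $\rho_{\text{mat}}$; and $\rho_{\text{col}}(n)\le 1/2$ is inherited from the subgraph case. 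Only a handful of genuinely new bounds then remain.

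The pairwise upper bound $\rho_{\text{pw}}(n)\le 1-2/n$ is a first-moment estimate: a disconnected graph has at least $n-1$ non-edges, but with marginal strictly above $1-2/n$ the expected number of non-edges is $<\binom{n}{2}\cdot 2/n=n-1$, so by Markov some outcome has fewer and is therefore connected. The matching lower bound $\rho_{\text{pw}}(n)\ge 1-2/n-\Theta(1/n^2)$ requires an almost-surely disconnected, pairwise-independent construction with marginal close to $1-2/n$; the natural template is the ``uniformly random isolated vertex'' law, which has marginal exactly $1-2/n$ but is only $3$-wise dependent. I would restore pairwise independence by choosing the isolated vertex through a pairwise-but-not-$3$-wise independent family of indicators (e.g.\ a Hadamard or small-bias construction), where the $\Theta(1/n^2)$ slack is precisely the cost of this weakening.

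For the edge-subgraph upper bound $\rho_{\text{esub}}(n)\le 3/4$, I would fix any Hamiltonian path $v_1v_2\cdots v_n$ in $K_n$ and let $B_i$ be the event that $\{v_i,v_{i+1}\}$ is missing. Whenever $|i-j|\ge 2$, the edges $\{v_i,v_{i+1}\}$ and $\{v_j,v_{j+1}\}$ are non-adjacent, and instantiating \Cref{def:edge-subgraph} with $W=\{v_j,v_{j+1}\}$ shows that $B_i$ and $B_j$ are independent. Hence the dependency graph of $\{B_i\}$ has maximum degree $2$, and Shearer's tight form of LLL (Footnote~\ref{footnote:Shearer} with $d=2$) gives $\Pr[\bigcap_i \overline{B_i}]>0$ as soon as $\Pr[B_i]<1/4$, i.e.\ $p>3/4$; the event $\bigcap_i \overline{B_i}$ places the Hamiltonian path into the random graph and thereby forces connectivity.

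The matching-level lower bound $\rho_{\text{mat}}(8k)\ge 1/2$ is the main new counterexample: on $n=8k$ vertices I would generalize the ``even-color clique'' example discussed before \Cref{sec:detailed-results} by assigning each vertex a random label and determining the edge set via a global algebraic (parity) rule chosen so that (i) any matching of non-adjacent edges decouples into disjoint label pairs and thus evaluates a jointly independent family of $\mathbb{F}_2$-linear forms, yielding matching independence, while (ii) the global rule forces the graph to split into at least two components; the divisibility $8\mid n$ supplies the algebraic room needed to push the marginal to $1/2$. The deepest remaining statement is the coloring asymptotic $\lim_n\rho_{\text{col}}(n)=2-\phi$: via the reduction sketched in \Cref{sec:bounded-dependency-graphs}, this becomes the weighted density-Tur\'an problem for spanning trees on $K_n$. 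The lower bound is the Fibonacci path-blowup of \cite[Thm.~1.7]{badakhshian2023density}, and for the matching upper bound -- the conjecture \cite[Conj.~1.9]{badakhshian2023density} -- I would run a greedy tree-building strategy whose progress is measured by a potential calibrated on the Fibonacci recurrence $F_{k+1}=F_k+F_{k-1}$, with ratio $F_k/F_{k+1}\to\phi-1$, so that $2-\phi$ emerges as the fixed-point critical density. I expect this matching step to be the main obstacle. Finally, the bounds $\rho_{\text{col}}(n)\ge 1/4$ and $\rho_{\text{col},2}(n)=1/4$ come from a simple two-color example (each vertex picks red or blue uniformly and the graph is the clique on the reds, with marginal $1/4$ and always disconnected), together with a short case analysis over the deterministic edge-rules $f:\{R,B\}^2\to\{0,1\}$ to close the two-color upper bound.
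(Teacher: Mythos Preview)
Your overall organization via the hierarchy is correct and matches the paper: the monotonicity $\rho_{\text{col}}\le\rho_{\text{sub}}\le\rho_{\text{esub}}\le\rho_{\text{mat}}\le\rho_{\text{pw}}$ does transfer the subgraph lower bound upward and the pairwise upper bound downward, and your first-moment argument for $\rho_{\text{pw}}(n)\le 1-2/n$ and your LLL-on-a-Hamiltonian-path argument for $\rho_{\text{esub}}(n)\le 3/4$ are exactly the paper's proofs. However, several of the constructions you sketch are either different from the paper's or outright broken.

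The clearest error is your two-color lower bound: ``each vertex picks red or blue uniformly and the graph is the clique on the reds'' is \emph{not} almost surely disconnected, since with probability $2^{-n}$ every vertex is red and the graph is $K_n$. The paper's construction is more subtle: it partitions $[n]$ into three nonempty parts $A,B,C$, gives each vertex a uniform red/blue bit, and uses an \emph{asymmetric} rule (red--red within a part, red--blue cyclically between parts) so that every connected component is trapped inside the union of two of the three parts. Your pairwise lower bound is also different: rather than a Hadamard/small-bias derandomization of the isolated-vertex law, the paper takes a convex combination of two simple regimes---``$K_{n-1}$ plus one random isolated vertex'' with weight $q$, and ``uniformly random perfect matching'' with weight $1-q$---and tunes $q=1-\Theta(1/n^2)$ so that the second-order term in $\Pr[X_e\text{ and }X_f]$ matches $\Pr[X_e]^2$ exactly; it is not clear your scheme produces a genuine probability distribution with all marginals equal.

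The most serious gap is the coloring upper bound $\limsup_n\rho_{\text{col}}(n)\le 2-\phi$. Your ``greedy tree-building with a Fibonacci-calibrated potential'' is not how the paper proceeds, and I do not see how to make such a potential argument work here: the difficulty is that a single vertex may have \emph{no} color connecting it to a given large set, so greedy attachment can stall. The paper's proof is a lengthy case analysis: start from a random coloring (which by Azuma makes $\E_c[|N(v,c)|]>(p+\eps/2)n$ for all $v$ w.h.p.), pick a vertex $r$ with a largest neighborhood $R$ and, if necessary, a second vertex $b$ whose largest neighborhood $B$ is essentially disjoint from $R$; then classify the remaining vertices as \emph{obligate red}, \emph{obligate blue}, or \emph{non-obligate} according to whether they can be robustly recolored into $B$ or $R$, and dispatch the cases $|OR|=0$ or $|OB|=0$, $|OR|$ or $|OB|$ larger than $C\log n$, and $0<|OR|,|OB|\le C\log n$ separately, the last via a matching between $OR$ and $OB$ and a further split on edge orientations. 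The golden ratio enters only through the identities $(1-p)^2=p$ and $1/p-2=1-p$ at $p=2-\phi$, which are exactly what make all the neighborhood-size inequalities close up. Similarly, the lower-bound construction is not a ``Fibonacci path blow-up'' but the model $CS(n)$ in which one distinguished vertex picks a random non-neighbor and the others independently pick red/blue; optimizing the red probability $q$ gives the stated bound tending to $2-\phi$.
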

To the best of our knowledge, this question had not been considered so far for the pairwise independence, matching independence, or edge-subgraph independence models. For the subgraph independence model, the threshold was known precisely~\cite{day2020oneindependent}, but we include it in our theorem for completeness of the hierarchy of our models. Our results for the coloring model confirm a conjecture of Badakhshian, Falgas-Ravry, and Sharifzadeh~\cite{badakhshian2023density}.

We remark that, while $\calG_{\text{col}}(n,p)$ includes models with infinitely many outcomes of the random experiment at each vertex, it has been shown\footnote{In fact, these results are stated only for finitely many colors as they are in the density \turan setting, but the proofs go through in our more general set-up as well.} (in~\cite{bondy2006density} for $n=3$ and in~\cite[Lemma 2.1]{nagy2011multipartite} for all $n$) that $\rho_{\text{col},k}(n) \geq \rho_{\text{col}}(n)$ for $k=n-1$, which combined with the trivial $\rho_{\text{col},k}(n) \leq \rho_{\text{col}}(n)$ for any $k$ gives $\rho_{\text{col},n-1}(n) = \rho_{\text{col}}(n)$. The proofs of $\rho_{\text{col},n-1}(n) \geq \rho_{\text{col}}(n)$ take a model which has probability $0$ of being connected and modify it to use at most $n-1$ many colors while preserving the fact that it is never connected and increasing or keeping the same its marginal edge probabilities. However, the probability distribution over all graphs on~$n$ vertices that the modified model gives is usually different from the original one. Here, we prove the even stronger statement that $\calG_{\text{col}}(n,p) = \calG_{\text{col},k}(n,p)$ for some $k = k(n)$, showing that finitely many colors suffice to model any probability distribution over all graphs that can be represented as a coloring model. We defer the proof of \Cref{lem:finitely_many_colors} to~\Cref{app:finitely_many_colors}.
\begin{lemma}
\label{lem:finitely_many_colors}
Consider a coloring model on $n$ vertices along with its corresponding probability distribution $\calD$ over the graphs on $n$ vertices. Then there is a coloring model on $n$ vertices with at most $2^{\binom{n}{2}} + 1$ colors per vertex which results in the same distribution $\calD$.
\end{lemma}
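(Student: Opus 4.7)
The plan is to shrink each vertex's color space one vertex at a time using Carath\'eodory's theorem. Regard the graph distribution $\calD$ as a vector in $\mathbb{R}^{2^{\binom{n}{2}}}$ whose coordinates are the probabilities of the $2^{\binom{n}{2}}$ graphs on the vertex set.

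First I would fix a single vertex $v$. For every color $\omega_v \in \Omega_v$, define $\calD_{\omega_v}$ to be the distribution of the graph obtained by \emph{setting} $c_v := \omega_v$ and sampling the remaining $c_u$'s independently from their $Pr_u$. Because the colors are independent across vertices, $\calD_{\omega_v}$ is determined by $\omega_v$, by the functions $f_{u,v}(\cdot,\omega_v)$, and by the data $(Pr_u, f_{u,w})_{u,w\neq v}$; crucially, it does \emph{not} depend on the measure $Pr_v$. The product structure of the coloring model yields the disintegration
\[
\calD \;=\; \int_{\Omega_v} \calD_{\omega_v}\, dPr_v(\omega_v),
\]
so $\calD$ lies in the convex hull of $\{\calD_{\omega_v} : \omega_v\in\Omega_v\} \subseteq \mathbb{R}^{2^{\binom{n}{2}}}$.

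Carath\'eodory's theorem in $\mathbb{R}^{2^{\binom{n}{2}}}$ then produces a finite subset $S_v = \{\omega_v^{(1)},\ldots,\omega_v^{(m)}\} \subseteq \Omega_v$ with $m \leq 2^{\binom{n}{2}}+1$, together with nonnegative weights $\alpha_1,\ldots,\alpha_m$ summing to $1$, such that $\calD = \sum_{i=1}^m \alpha_i\, \calD_{\omega_v^{(i)}}$. I would replace $(\Omega_v, Pr_v)$ by $(S_v, Pr'_v)$ with $Pr'_v(\omega_v^{(i)}) := \alpha_i$, and restrict every $f_{u,v}$ to $\Omega_u \times S_v$. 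Since each $\calD_{\omega_v^{(i)}}$ depends only on data that we have not changed, the new coloring model still induces exactly the graph distribution $\calD$.

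Iterating this reduction vertex-by-vertex finishes the proof: processing a later vertex $u$ only modifies the pair $(\Omega_u, Pr_u)$, so the reductions carried out at earlier vertices are preserved, and the graph distribution remains equal to $\calD$ at every step. After one pass over the $n$ vertices, every color space has at most $2^{\binom{n}{2}}+1$ elements. The main point that deserves care in the write-up, and really the only one, is justifying the passage from the integral representation to a \emph{finite} convex combination supported on points of $\Omega_v$: this follows because $\{\calD_{\omega_v}\}$ is contained in the compact simplex of graph distributions, so the mean lies in the convex hull of the image, and Carath\'eodory then delivers the bound $m \leq 2^{\binom{n}{2}}+1$.
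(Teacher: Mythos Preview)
Your proposal is correct and follows essentially the same route as the paper: condition on the color of a single vertex to write $\calD$ as a mixture of the conditional distributions $\calD_{\omega_v}$, view this as a point in the convex hull of a subset of $\mathbb{R}^{2^{\binom{n}{2}}}$, invoke Carath\'eodory to cut down to at most $2^{\binom{n}{2}}+1$ colors for that vertex, and iterate over all vertices. If anything, you are slightly more careful than the paper in flagging the step from the integral mixture to membership in the (finite-combination) convex hull of the image.
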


\subsection{Open Questions}

There are many interesting questions that remain open. Most obviously, for large $n$ there are gaps between the upper and lower bounds for matching independence and edge-subgraph independence. For the coloring model we do have matching upper and lower bounds for an unbounded number of colors and for $k=2$, but not for other constant values of $k$. Moreover, for fixed $n$ it is unclear how much richer the coloring model gets by adding more colors. As mentioned above, we know that $\calG_{\text{col}}(n,p) = \calG_{\text{col},k}(n,p)$ with $k=2^{\binom{n}{2}}+1$ and  $\rho_{\text{col}}(n) = \rho_{\text{col},{n-1}}(n)$. But what is still an open question is the behavior of the functions $k^{\calG}_{min}(n) = \min\{k\;|\;\calG_{\text{col}}(n,p) = \calG_{\text{col},k}(n,p)\}$ and $k^{\rho}_{min}(n) = \min\{k\;|\;\rho_{\text{col}}(n,p) = \rho_{\text{col},k}(n,p)\}$, that is, how many colors are enough to be able to express all coloring models on $n$ vertices, respectively to capture the behaviour of the connectivity threshold of the coloring models on~$n$ vertices?

Finally, in this paper we focus on connectivity because that is arguably the most fundamental global property of a graph. We only see this as a starting point and we would find it interesting to explore analogous questions for other global properties. A natural extension might be to study the size of the largest connected component that can be achieved with non-zero probability, but the same questions arise for any other global graph properties such as Hamiltonicity, the chromatic number, and many more.

\subsection{Proof Techniques}
We prove the strict hierarchical structure of our considered independence conditions (\Cref{thm:hierarchy}) by giving concrete examples of distributions that fulfill the weaker independence condition, but not the stronger one. All of these examples are simple to describe and quite illustrative.

The lower bounds on the thresholds $\rho$ are shown by concrete series of graph distributions on disconnected graphs. Here, the lower bound on $\rho_{\text{pw}}$ (pairwise ind.) uses the same distribution as the distribution used to show $\calG_{\text{pw}}\neq \calG_{\text{mat}}$. The lower bounds on $\rho_{\text{mat}},\rho_{\text{esub}}$, and $\rho_{\text{sub}}$ all use the same construction, since we were not able to make use of the additional freedom available in the edge-subgraph or even the matching independence condition (except for the case when $n$ is divisible by $8$ in the matching independence model). This lower bound on $\rho_{sub}$ has been previously proven in~\cite[Thm. 16]{day2020oneindependent}. We provide the construction for completeness. It uses a distribution that can be seen conceptually as a version of a coloring model with \emph{complex probabilities}. The proof that the distribution fulfills subgraph independence follows from the fact that the coloring models are subgraph independent, and from the fundamental theorem of algebra. The lower bounds on $\rho_{\text{col}}$ and $\rho_{\text{col},2}$ once again are simple constructions.

For the upper bounds on the thresholds $\rho$ we use very different proof techniques depending on the independence condition. The upper bound for pairwise and matching independence does not make use of these independence conditions, and just combines a linearity of expectation argument with the maximum number of edges in a disconnected graph. As mentioned above, edge-subgraph independence is the first independence condition which allows us to apply \lovasz Local Lemma to achieve a constant bound on $\rho$.
For the coloring models, we give concrete strategies on how to pick a color for each vertex such that the graph is connected. For the two color case, this strategy is rather simple, while the strategy for the general case is based on adjusting a random coloring, which fulfills useful properties with high probability for large $n$. The proof of this bound (\Cref{thm:coloring_upper_n_infinity}) is by far the most technically involved proof in this paper.

\subsection{Paper Overview}
In \Cref{sec:preliminaries}, we introduce some classical tools from probability theory used in our proofs. Then, in \Cref{sec:hierarchy}, we prove \Cref{thm:hierarchy}, i.e., that our five models of independence form a strict hierarchy. \Cref{sec:bounds} contains the proofs for all bounds summarized in \Cref{thm:main}.

\section{Preliminaries}\label{sec:preliminaries}
We make use of the following version of the \lovasz Local Lemma.
\begin{lemma}[\cite{shearer1985problem}]\label{lem:LLL}
    Let $A_1,\ldots A_k$ be a sequence of events, such that each event occurs with probability at least $p$ and is independent of all the other events, except at most $d$ of them. Then, if
    \[ p > \begin{cases}
    1-\frac{(d-1)^{d-1}}{d^d} & \text{for $d\geq2$,}\\
    $1/2$ & \text{for $d=1$,}
    \end{cases}\]
    there is a non-zero probability that all of the events occur.
\end{lemma}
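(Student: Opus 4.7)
The plan is to prove this form of the Lov\'asz Local Lemma (with Shearer's sharp constant) via the standard inductive coupling argument. First I would reformulate in terms of the bad events $B_i := \overline{A_i}$, which satisfy $\Pr[B_i] \leq q := 1-p$ and are mutually independent of all but at most $d$ of the other $B_j$. The goal then becomes $\Pr[\bigcap_i \overline{B_i}] > 0$.

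The workhorse is the following inductive claim: there exists $x \in [0,1)$ such that for every $i$ and every $S \subseteq \{1,\ldots,k\} \setminus \{i\}$,
\[\Pr[B_i \mid \textstyle\bigcap_{j \in S} \overline{B_j}] \leq x.\]
I would prove this by induction on $|S|$, partitioning $S = S_1 \sqcup S_2$ with $S_1 := S \cap N(i)$ and $S_2 := S \setminus N(i)$, where $N(i)$ is the set of at most $d$ indices on which $B_i$ is dependent. Conditional probability expands the left-hand side as the ratio
\[\frac{\Pr[B_i \cap \bigcap_{j \in S_1} \overline{B_j} \mid \bigcap_{j \in S_2} \overline{B_j}]}{\Pr[\bigcap_{j \in S_1} \overline{B_j} \mid \bigcap_{j \in S_2} \overline{B_j}]}.\]
The numerator is bounded by $\Pr[B_i \mid \bigcap_{j \in S_2} \overline{B_j}] = \Pr[B_i] \leq q$ using the independence of $B_i$ from $\{B_j : j \in S_2\}$. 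The denominator expands via the chain rule, and bounding each factor by the induction hypothesis gives a lower bound of $(1-x)^{|S_1|} \geq (1-x)^d$. Hence the inductive step closes whenever $q \leq x(1-x)^d$, and chaining $k$ applications of the claim then yields $\Pr[\bigcap_i \overline{B_i}] \geq (1-x)^k > 0$, which is exactly what we want.

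It then remains to choose $x$ optimally. Naively maximizing $x(1-x)^d$ at $x = 1/(d+1)$ yields only the Erd\H{o}s-Lov\'asz threshold $d^d/(d+1)^{d+1} \sim 1/(ed)$, which is strictly weaker than Shearer's $(d-1)^{d-1}/d^d$. To close this gap I would follow Shearer's refinement, which allows vertex-dependent $x_i$ and reduces the worst case to the infinite $d$-regular tree; on this tree the recursion for $\Pr[B_i \mid \bigcap \overline{B_j}]$ can be solved exactly by a fixed-point computation. Equivalently, one can invoke Shearer's criterion that $\Pr[\bigcap_i \overline{B_i}] > 0$ holds iff the independence polynomial of the dependency graph is strictly positive on the relevant simplex. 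The $d=1$ case is handled separately and immediately: the dependency graph is a disjoint union of edges and isolated vertices, so the probability factorizes across components, and on each edge $\{B_1,B_2\}$ a union bound gives $\Pr[\overline{B_1} \cap \overline{B_2}] \geq 1 - 2(1-p) > 0$ whenever $p > 1/2$.

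The main obstacle is not the inductive skeleton, which is routine, but obtaining Shearer's sharp constant rather than the cruder $1/(ed)$ bound. Matching $(d-1)^{d-1}/d^d$ requires tracking vertex-dependent parameters and exploiting that the extremal dependency graph is the $d$-regular tree; since this lemma is used in the paper only as a black box, the cleanest option would be to simply cite~\cite{shearer1985problem}.
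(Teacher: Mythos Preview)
The paper does not prove this lemma at all: it is stated in the Preliminaries as a black-box tool with a citation to~\cite{shearer1985problem}, and no argument is given. Your closing recommendation---to simply cite Shearer rather than reproduce the sharp-constant argument---is therefore exactly what the paper does; the inductive sketch you outline is extra material that the paper omits entirely.
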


The following lemma is a direct consequence of Markov's inequality.

\begin{lemma}\label{lem:markovderivate}
    Let $X$ be a random variable such that $Pr[X\leq u]=1$. Then for $\ell < u$, $$Pr[X\geq \ell]\geq \frac{\E[X]-\ell}{u-\ell}.$$
\end{lemma}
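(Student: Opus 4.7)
The plan is to reduce the statement to the classical Markov inequality via a linear change of variable that reflects $X$ about its upper bound $u$. Concretely, I would introduce the auxiliary random variable $Y := u - X$. Since the hypothesis $Pr[X \le u] = 1$ forces $Y \ge 0$ almost surely, Markov's inequality applies to $Y$, and linearity of expectation gives $\E[Y] = u - \E[X]$.

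Next I would translate the event of interest into an event about $Y$: the complement of $\{X \ge \ell\}$ is $\{X < \ell\} = \{Y > u - \ell\}$. Because $\ell < u$, the threshold $u - \ell$ is strictly positive, so Markov yields
\[ Pr[Y \ge u - \ell] \;\le\; \frac{\E[Y]}{u - \ell} \;=\; \frac{u - \E[X]}{u - \ell}. \]
Using the obvious inclusion $\{Y > u - \ell\} \subseteq \{Y \ge u - \ell\}$ and passing to complements gives
\[ Pr[X \ge \ell] \;\ge\; 1 - \frac{u - \E[X]}{u - \ell} \;=\; \frac{\E[X] - \ell}{u - \ell}, \]
which is the claimed inequality.

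I do not anticipate any genuine obstacle; this is essentially a one-line reduction once the correct reflection is identified. The only minor bookkeeping concerns strict-versus-weak inequality at the boundary, handled by the trivial inclusion above, and the edge case $\E[X] = -\infty$, in which the conclusion becomes vacuous.
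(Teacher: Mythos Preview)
Your proposal is correct and follows essentially the same route as the paper: define $Y=u-X$, apply Markov's inequality to the nonnegative variable $Y$ at threshold $u-\ell$, and pass to the complement to obtain the desired bound. The only difference is cosmetic---you are slightly more explicit about the strict-versus-weak inequality via the inclusion $\{Y>u-\ell\}\subseteq\{Y\ge u-\ell\}$.
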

\begin{proof}
    Note that the random variable $u-X$ is non-negative and has expectation $u-\E[X]$. We can thus apply Markov's inequality.
    \[Pr[X\geq\ell] = Pr[u-X\leq u-\ell] = 1-Pr[u-X>u-\ell]\geq 1-\frac{u-\E[X]}{u-\ell}=\frac{\E[X]-\ell}{u-\ell}.\qedhere\]
\end{proof}

We also use Azuma's inequality, as for example stated in \cite{janson2011azuma} and originally proven by Azuma~\cite{azuma1967ineq}:
\begin{theorem}\label{thm:azuma}
    Let $(\Omega,Pr)$ be a probability space given as the product of $N$ discrete probability spaces $\{(\Omega_i,Pr_i)\}_{1\leq i\leq N}$, and let $X:\Omega\rightarrow \mathbb{R}$ be a random variable such that the effect of the $i$-th coordinate is at most $c_i$ (i.e., $|X(\omega_1,\ldots,\omega_i,\ldots \omega_N)-X(\omega_1,\ldots,\omega'_i,\ldots,\omega_N)|\leq c_i$ for all $\omega_j\in\Omega_j$ for each $j\neq i$ and all $\omega_i,\omega'_i\in\Omega_i$). Then, for all $t\geq 0$, we have
    \[Pr[X\geq \E[X]+t]\leq e^{-\frac{t^2}{2\sum_{i=1}^N c_i^2}} \text{ and } Pr[X\leq \E[X]-t]\leq e^{-{\frac{t^2}{2\sum_{i=1}^N c_i^2}}}.\]
\end{theorem}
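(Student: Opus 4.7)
The plan is to prove Azuma's inequality (in the bounded-differences form stated) via the standard Doob martingale together with Hoeffding's lemma; by symmetry it suffices to prove the upper tail bound. First I would build the Doob martingale adapted to the product structure: set $M_0 := \E[X]$ and $M_i := \E[X \mid \omega_1,\ldots,\omega_i]$ for $i=1,\ldots,N$, so that $M_N = X$ almost surely and $X - \E[X] = \sum_{i=1}^N D_i$ with $D_i := M_i - M_{i-1}$. By the tower property, $(M_i)$ is a martingale with respect to the filtration generated by the first $i$ coordinates, so $\E[D_i \mid \omega_1,\ldots,\omega_{i-1}] = 0$.

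The key structural step, and the only place the bounded-effect hypothesis enters, is to show $|D_i| \le c_i$ pointwise. Writing both conditional expectations as integrals over the remaining coordinates against the product measure $Pr_{i+1}\otimes\cdots\otimes Pr_N$, I would express
\[
D_i(\omega_1,\ldots,\omega_i) = \int \bigl[X(\omega_1,\ldots,\omega_i,\omega_{i+1},\ldots,\omega_N) - X(\omega_1,\ldots,\omega_{i-1},\omega_i',\omega_{i+1},\ldots,\omega_N)\bigr]\, dPr_i(\omega_i')\, dPr_{i+1}\cdots dPr_N,
\]
so that by the coordinate-wise Lipschitz hypothesis the integrand has absolute value at most $c_i$, and hence $|D_i|\le c_i$. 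This argument is the conceptual core; the rest is the classical Chernoff scheme.

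Next I would apply the exponential-moment / Markov inequality trick. For any $s>0$,
\[
Pr[X - \E[X] \ge t] \le e^{-st}\, \E\!\left[\exp\!\Bigl(s\sum_{i=1}^N D_i\Bigr)\right].
\]
Conditioning on $\omega_1,\ldots,\omega_{N-1}$ and peeling off $D_N$, I would use Hoeffding's lemma: for any random variable $Y$ with $\E[Y]=0$ and $|Y|\le c$, one has $\E[e^{sY}] \le e^{s^2 c^2/2}$. Applying this to $D_N$ (conditionally on the past, it has mean zero and range in $[-c_N,c_N]$) yields
\[
\E\!\left[\exp\!\Bigl(s\sum_{i=1}^N D_i\Bigr)\right] \le e^{s^2 c_N^2/2}\cdot \E\!\left[\exp\!\Bigl(s\sum_{i=1}^{N-1} D_i\Bigr)\right],
\]
and iterating gives the product bound $\prod_{i=1}^N e^{s^2 c_i^2/2} = \exp\bigl(\tfrac{s^2}{2}\sum c_i^2\bigr)$. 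Optimizing in $s$ by setting $s = t/\sum c_i^2$ produces the desired bound $\exp\bigl(-t^2/(2\sum c_i^2)\bigr)$, and the lower-tail estimate follows by applying the same argument to $-X$ (which has the same bounded-effect constants).

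The main obstacle I anticipate is the martingale-difference bound $|D_i|\le c_i$: one has to be careful that changing the $i$-th coordinate only changes the conditional expectation by at most $c_i$, which requires using the product structure to write the two conditional expectations as integrals against the \emph{same} measure on the future coordinates before invoking the pointwise Lipschitz bound. Hoeffding's lemma itself is a short convexity argument that I would either cite or prove separately by bounding $e^{sy}$ on $[-c,c]$ by the chord through $(-c, e^{-sc})$ and $(c, e^{sc})$ and comparing the resulting linear function's expectation against the exponential of $s^2c^2/2$.
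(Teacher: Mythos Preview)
Your proof is correct and is the standard Doob-martingale plus Hoeffding's-lemma argument for the bounded-differences inequality. Note, however, that the paper does not prove this theorem at all: it is stated in the Preliminaries as a classical result and attributed to Azuma~\cite{azuma1967ineq} (with the formulation taken from~\cite{janson2011azuma}), so there is no ``paper's own proof'' to compare against. Your write-up would serve as a self-contained verification of the cited result.
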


In some of our proofs we make use of the following construction of a new graph distribution from two graph distributions.

\begin{definition}\label{def:convexcombination}
    Let $C,D$ be two distributions on graphs on $n$ vertices. For any $0\leq\alpha\leq 1$, the \emph{convex combination} $\alpha\cdot C+(1-\alpha)\cdot D$ is the distribution given by first flipping a biased coin with probability of coming up heads being $\alpha$, and then sampling from $C$ if the coin shows heads, and sampling from $D$ otherwise.
\end{definition}
\begin{lemma}\label{lem:convexcombinations}
    Let $C,D$ be two distributions of graphs on $n$ vertices, such that for each edge $e$, the marginal probability of $e$ being present is the same in both distributions, i.e.,  $Pr_C[X_e]=Pr_D[X_e]$. Then, for any $\alpha$, if both $C$ and $D$ are pairwise, matching, or edge-subgraph independent, then respectively so is $\alpha\cdot C + (1-\alpha)\cdot D$.
\end{lemma}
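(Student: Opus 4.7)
The plan is to expand joint probabilities under the mixture by linearity and then exploit the assumption that single-edge marginals agree between $C$ and $D$. Writing $\mu := \alpha\, C + (1-\alpha)\, D$, for any event $E$ we have $Pr_\mu[E] = \alpha\, Pr_C[E] + (1-\alpha)\, Pr_D[E]$, and in particular $Pr_\mu[X_e] = Pr_C[X_e] = Pr_D[X_e]$ for every edge $e$ by hypothesis. This identification of marginals on single edges is the only nontrivial ingredient; everything else is bookkeeping.

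For pairwise independence, I would apply the above to the event $X_e \wedge X_f$ with $e,f$ non-adjacent, invoke pairwise factorization inside $C$ and inside $D$, and observe that $Pr_C[X_e]\, Pr_C[X_f]$ and $Pr_D[X_e]\, Pr_D[X_f]$ are equal since each single-edge marginal is shared; the convex combination therefore collapses to this common value, which is exactly $Pr_\mu[X_e]\cdot Pr_\mu[X_f]$. The matching case is essentially identical, applied to $\{X_e = b_e : e \in M\}$ for an arbitrary $b \in \{0,1\}^M$: each per-edge factor $Pr[X_e = b_e]$ depends only on the marginal of $X_e$, so the full product $\prod_{e\in M} Pr[X_e = b_e]$ is the same under $C$ and under $D$, and linearity then finishes.

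For edge-subgraph independence, I would run the same computation on the event $\{G[W] = H_W\} \cap \{X_e\}$. By the assumption on $C$ and $D$ individually,
\[ Pr_\mu[G[W]=H_W \wedge X_e] = \alpha\, Pr_C[G[W]=H_W]\, Pr_C[X_e] + (1-\alpha)\, Pr_D[G[W]=H_W]\, Pr_D[X_e]. \]
The key step is to pull the common factor $Pr_C[X_e] = Pr_D[X_e] = Pr_\mu[X_e]$ out of the convex combination; what remains is precisely $\alpha\, Pr_C[G[W]=H_W] + (1-\alpha)\, Pr_D[G[W]=H_W] = Pr_\mu[G[W]=H_W]$, yielding $Pr_\mu[G[W]=H_W]\cdot Pr_\mu[X_e]$ as desired.

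There is no real obstacle; the one thing worth flagging is why the lemma stops at edge-subgraph independence and omits subgraph independence. For subgraph independence one would need to factor $Pr_\mu[G[V]=H_V \wedge G[W]=H_W]$, and pulling a common factor out of the mixture would require $Pr_C[G[V]=H_V] = Pr_D[G[V]=H_V]$ for \emph{arbitrary} induced subgraphs $H_V$, not merely for single edges -- a hypothesis strictly stronger than the one assumed. Thus the shared single-edge marginal is just enough to handle one side of the product in each of the three independence conditions listed, which is exactly why the lemma is stated for these three and no further.
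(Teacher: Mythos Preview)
Your proof is correct and follows essentially the same approach as the paper's: expand the mixture by linearity, factor inside $C$ and $D$ using the assumed independence, and pull out the common single-edge marginal. Your additional remark explaining why the argument breaks down for subgraph independence is a nice touch not present in the paper's proof.
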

The proof can be found in \Cref{app:convexcombproof}. Note that the condition $Pr_C[X_e]=Pr_D[X_e]$ cannot be omitted. Without this condition, the convex combination is again a probability distribution, but independence is lost. To see this, simply note that every one-point distribution on a graph on $n$ vertices (i.e., $\Pr[G = G_0] =1$ for a fixed graph $G_0$) is trivially a coloring model and thus also satisfies all other independence conditions. Moreover, \emph{every} distribution on graphs of $n$ vertices can be written as a convex combination of one-point distributions, which would lead to a contradiction if the lemma  were true without the condition $Pr_C[X_e]=Pr_D[X_e]$.
Also note that the lemma does not hold for subgraph independence nor for coloring models; a counterexample will come up in the proof of \Cref{lem:sub_esub_inclusion}.

\section{A Strict Hierarchy}\label{sec:hierarchy}
This section is dedicated to proving \Cref{thm:hierarchy}:
\hierarchy*

We will prove each strict inclusion as its own lemma, from left to right. The first among these lemmas shows that each coloring model is subgraph independent but some subgraph independent distributions are not coloring models, which was known from prior work~\cite{aaronson1989algebraic, burton19931, day2020oneindependent, holroyd2016finitely}, but we include the proof here for completeness.
\begin{lemma}
    \label{lem:col_sub_inclusion}
    $\calG_{\text{col}}\subsetneq \calG_{\text{sub}}$, i.e., every coloring model is subgraph independent, but there are some subgraph independent distributions that are not coloring models.
\end{lemma}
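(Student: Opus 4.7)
The plan is to split the argument into the easy inclusion and the strict part. For the inclusion $\calG_{\text{col}} \subseteq \calG_{\text{sub}}$, I would just unfold \Cref{def:coloring} and \Cref{def:subgraph}: in a coloring model on vertex set $V$, the induced subgraph $G[V_0]$ on any $V_0 \subseteq V$ is a deterministic function of the independent color variables $(Y_v)_{v\in V_0}$ via the edge functions $f_{uv}$ for $u,v\in V_0$; for disjoint $V_0, W_0\subseteq V$ the two tuples $(Y_v)_{v\in V_0}$ and $(Y_w)_{w\in W_0}$ are independent, and hence so are $G[V_0]$ and $G[W_0]$. This is essentially a one-line check.

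For the strict part I would produce a concrete example in $\calG_{\text{sub}} \setminus \calG_{\text{col}}$. My preferred candidate is the minimal one on $n=3$: the distribution $\calD$ on graphs with vertex set $\{1,2,3\}$ supported on the complete graph $K_3$ and the edgeless graph, each with probability $1/2$. Subgraph independence is automatic for this $\calD$, since for any two disjoint nonempty subsets $V_0, W_0 \subseteq \{1,2,3\}$ we have $|V_0|+|W_0|\le 3$, so at least one of the two sides has size at most one; its induced subgraph is deterministically the edgeless graph on one vertex, and the factorization in \Cref{def:subgraph} is trivially satisfied.

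The substantive part is ruling out $\calD \in \calG_{\text{col}}$. I would assume such a representation exists, with independent color variables $Y_1, Y_2, Y_3$ and edge functions $f_{12}, f_{13}, f_{23}$. Since $\calD$ is supported on $\{K_3, \text{edgeless}\}$, almost surely $f_{12}(Y_1,Y_2) = f_{13}(Y_1,Y_3) = f_{23}(Y_2,Y_3)$. Conditioning on a positive-probability value $y_1$ of $Y_1$, the identity $f_{12}(y_1, Y_2) = f_{13}(y_1, Y_3)$ has its left side depending only on $Y_2$ and its right side only on $Y_3$; independence of $Y_2$ and $Y_3$ forces both sides to be almost surely equal to a constant $h(y_1)$. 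Applying the same argument to $f_{12}(Y_1,Y_2) = f_{23}(Y_2,Y_3)$ after fixing a positive-probability value of $Y_2$ shows that the common value is a function of $Y_2$ alone; combining the two conclusions and using independence of $Y_1$ and $Y_2$, the function $h$ must be constant. Thus each $f_{ij}$ is constant a.s., contradicting the marginal edge probability $1/2$.

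The step I expect to be the main obstacle is phrasing the final functional-equation argument cleanly when the color spaces $\Omega_v$ are infinite. I would sidestep this by conditioning on positive-probability atoms of each $Y_i$ and invoking independence; alternatively, \Cref{lem:finitely_many_colors} lets me assume the color spaces are finite from the outset, in which case the argument reduces to a routine check on finite functions.
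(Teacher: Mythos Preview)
Your proof is correct. The inclusion $\calG_{\text{col}}\subseteq\calG_{\text{sub}}$ matches the paper verbatim. For the strictness, you and the paper take different but equally valid routes.

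The paper's witnessing distribution on three vertices is the \emph{uniform measure on the three one-edge graphs}. Its argument is combinatorial: for each vertex $v$ the color space must split into ``colors that allow the edge to the clockwise neighbor'' and ``colors that allow the edge to the anticlockwise neighbor'' (nothing else is possible since exactly one edge appears), and letting every vertex pick a clockwise color produces the empty graph with positive probability, a contradiction. Your distribution is instead the \emph{$\tfrac12$--$\tfrac12$ mixture of $K_3$ and the empty graph}, and your argument is analytic: the almost-sure identity $f_{12}(Y_1,Y_2)=f_{13}(Y_1,Y_3)=f_{23}(Y_2,Y_3)$ together with independence of the $Y_i$ forces the common value to be $\sigma(Y_1)$-measurable and $\sigma(Y_2)$-measurable simultaneously, hence a.s.\ constant, contradicting the marginal $\tfrac12$. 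Both examples are minimal ($n=3$) and both verifications of subgraph independence are trivial for the same reason (one side of any disjoint split is a singleton). The paper's argument is slightly more self-contained in that it never needs to worry about non-atomic color spaces, whereas your approach either appeals to \Cref{lem:finitely_many_colors} or has to be rephrased via conditional expectations to cover the general case; on the other hand, your functional-equation reduction is a clean template that would adapt more readily to other ``all-or-nothing'' support constraints.
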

\begin{proof}
    We first prove $\calG_{\text{col}}\subseteq \calG_{\text{sub}}$. Consider some graph distribution $D\in \calG_{\text{col}}$ and consider any two disjoint subsets of vertices $V,W$. The resulting graph within $V$ depends only on elementary experiments on $V$, and the resulting graph within $W$ depends only on elementary experiments on $W$. Since the elementary experiments are mutually independent, we have independence of the resulting graphs within $V$ and $W$. Thus, $D\in \calG_{\text{sub}}$.

    Next, we prove $\calG_{\text{col}}\not=\calG_{\text{sub}}$.\footnote{Note that this inequality actually follows from the bounds in \Cref{thm:main}. We nonetheless provide an example of a subgraph independent distribution that is not a coloring model for additional intuition.} Consider the graph distribution $\mathcal{T}$ which uniformly picks one of the three graphs on $3$ vertices with $1$ edge. Since there are only three vertices, $\mathcal{T}$ is clearly subgraph independent. Towards a contradiction, we assume that it is also a coloring model. Consider any vertex $v$ and its anti-clockwise neighbor $u$ and clockwise neighbor $w$. The outcomes of the elementary experiment at $v$ can be decomposed into the outcomes which allow the edge $\{u,v\}$ (but not $\{w,v\}$) to be present, and those which allow the edge $\{w,v\}$ (but not $\{u,v\}$) to be present. There can be no other outcomes, since otherwise the resulting graph would have zero or more than one edges with non-zero probability, which does not happen in $\mathcal{T}$. We now consider the case when the outcome of the experiment at each vertex falls into the part which allows the edge to the clockwise neighbor to be present (and does not allow the edge to the anti-clockwise neighbor to be present). This case must happen with non-zero probability, and leads to a graph with no edges. This contradicts the definition of $\mathcal{T}$, and we conclude that $\mathcal{T}$ is not a coloring model. Thus, $\calG_{\text{col}}\not=\calG_{\text{sub}}$.

    Note that $\mathcal{T}$ can also occur as an induced subgraph in larger subgraph independent graph distributions, and thus the inclusion is also strict when considering only graph distributions on some fixed number $n>2$ of vertices. 
\end{proof}

\begin{lemma}
    \label{lem:sub_esub_inclusion}
    $\calG_{\text{sub}}\subsetneq \calG_{\text{esub}}$, i.e., every subgraph independent distribution is edge-subgraph independent, but there are some edge-subgraph independent distributions that are not subgraph independent.
\end{lemma}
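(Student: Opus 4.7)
The plan is to handle the two directions separately. The easy direction $\calG_{\text{sub}} \subseteq \calG_{\text{esub}}$ follows immediately by applying the subgraph independence condition with $V = \{u, v\}$: since $G[\{u,v\}]$ is determined solely by the indicator $X_{\{u,v\}}$, the resulting identity is exactly the edge-subgraph independence condition.

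For the strict inclusion, I exploit the fact that \Cref{lem:convexcombinations} preserves edge-subgraph independence when marginals coincide but, as flagged in the remark following that lemma, can destroy subgraph independence. On $n = 6$ vertices partitioned into $V = \{a,b,c\}$ and $W = \{d,e,f\}$, let $C$ be the Erd\H{o}s-R\'enyi distribution $G_{6,1/2}$, which is trivially subgraph independent since all $15$ edges are mutually independent. For $D$, I use $11$ mutually independent fair coins: two ``macro'' coins $C_V, C_W$ and one coin per cross edge; I set all three edges inside $V$ equal to $C_V$, all three inside $W$ equal to $C_W$, and each cross edge equal to its own coin. Both $C$ and $D$ then assign marginal probability $1/2$ to every edge.

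To verify that $D$ is subgraph independent, I show that for any disjoint $V', W' \subseteq V \cup W$ the induced subgraphs $G[V']$ and $G[W']$ depend on disjoint subsets of the $11$ coins. Indeed, since $|V| = 3$, the sizes $|V' \cap V|$ and $|W' \cap V|$ cannot both exceed $1$, so $C_V$ affects at most one of $G[V'], G[W']$; symmetrically for $C_W$; and each cross-edge coin trivially affects at most one. Hence $D \in \calG_{\text{sub}} \subseteq \calG_{\text{esub}}$, and combined with $C \in \calG_{\text{esub}}$ and matching marginals, \Cref{lem:convexcombinations} yields that $M := \tfrac{1}{2}C + \tfrac{1}{2}D$ is edge-subgraph independent.

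It remains to show $M \notin \calG_{\text{sub}}$, witnessed by the partition $V, W$. Since $\Pr_C[G[V] = \emptyset] = (1/2)^3 = 1/8$ and $\Pr_D[G[V] = \emptyset] = \Pr[C_V = 0] = 1/2$, the mixture gives $\Pr_M[G[V] = \emptyset] = 5/16$, and analogously for $W$. The joint probability is $\Pr_M[G[V] = \emptyset,\, G[W] = \emptyset] = \tfrac{1}{2}(1/8)^2 + \tfrac{1}{2}(1/2)^2 = 17/128$, while the product of marginals is $(5/16)^2 = 25/256 \neq 17/128$. The main obstacle is engineering $D$ so that edge marginals match $G_{6,1/2}$ yet the joint over three edges of $G[V]$ deviates enough to survive averaging with $C$; the shared-coin construction inside each half resolves this by making the three inside-edges perfectly correlated while keeping every single edge individually fair.
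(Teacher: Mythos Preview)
Your proof is correct. Both you and the paper establish strictness via a convex combination of $G_{n,1/2}$ with a second distribution having the same edge marginals, exploiting that \Cref{lem:convexcombinations} preserves edge-subgraph independence but not subgraph independence. The constructions differ in the second component: the paper mixes in the ``two random cliques'' distribution (each vertex independently joins one of two cliques), then argues edge-subgraph independence directly and breaks subgraph independence by observing that the event ``exactly two edges in a triple $P$'' forces the Erd\H{o}s--R\'enyi regime, thereby altering the conditional probability that the complementary set is a clique. You instead mix in a hand-crafted subgraph-independent distribution with perfectly correlated triangles on each half, invoke \Cref{lem:convexcombinations} explicitly, and finish with a direct numerical computation. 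Your route is slightly more modular (it cleanly reuses \Cref{lem:convexcombinations} rather than re-proving its conclusion for the specific mixture) and your $D$ makes the failure of subgraph independence very transparent; the paper's construction has the minor advantage of working uniformly for all $n \ge 6$ without modification.
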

\begin{proof}
     Clearly $\calG_{\text{sub}}\subseteq \calG_{\text{esub}}$ as subgraph independence implies edge-subgraph independence by definition, since a single edge is also a subgraph.

     To prove $\calG_{\text{sub}}\not= \calG_{\text{esub}}$, we consider the following graph distribution $CC(n)$ on $n\geq 6$ vertices: with probability $1/2$, the distribution returns a graph drawn from the \erdos-\renyi distribution $G_{n,1/2}$. Otherwise, the distribution picks a uniformly random decomposition of the vertex set $[n]$ into two sets $A$ and $B$, and returns the graph consisting of the union of cliques on $A$ and $B$. We first show that $CC(n)$ is edge-subgraph independent. In both the \erdos-\renyi regime as well as the two-cliques regime, for any edge $e$ we have $Pr[X_e]=1/2$. This holds even when we condition on the outcome within any subgraph disjoint from $e$, thus $CC(n)$ is edge-subgraph independent. On the other hand, we show that $CC(n)$ is not subgraph independent. To this end, we decompose the vertex set $[n]$ into the sets $P=\{u,v,w\}$ and $Q=[n]\setminus P$. We consider the two events $\mathcal{P}:=\text{``There are exactly two edges within P''}$ and $\mathcal{Q}:=\text{``Q is a clique''}$. Clearly, we have
    \[Pr[\mathcal{Q}]=\frac{1}{2}\cdot 2^{-(n-4)} + \frac{1}{2}\cdot 2^{-\binom{n-3}{2}}>2^{-\binom{n-3}{2}}.\]
    On the other hand, $\mathcal{P}$ implies that we are in the \erdos-\renyi regime, and thus
    \[Pr[\mathcal{Q}\vert\mathcal{P}]=2^{-\binom{n-3}{2}}.\]
    We conclude that $Pr[\mathcal{Q}]>Pr[\mathcal{Q}\vert\mathcal{P}]$ and thus $\mathcal{P}$ and $\mathcal{Q}$ are not independent, showing that $CC(n)$ is not subgraph independent.
\end{proof}

The graph distribution $CC(n)$ that we built in the proof above is a \emph{convex combination} of its two regimes, as defined in \Cref{def:convexcombination}.

\begin{lemma}
    \label{lem:esub_mat_inclusion}
    $\calG_{\text{esub}}\subsetneq \calG_{\text{mat}}$, i.e., every edge-subgraph independent distribution is matching independent, but there are some matching independent distributions that are not edge-subgraph independent.
\end{lemma}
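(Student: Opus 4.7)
The plan is to split the claim into two parts: the inclusion $\calG_{\text{esub}} \subseteq \calG_{\text{mat}}$, which I would prove by induction on matching size, and the strictness, for which the paper's preceding discussion supplies exactly the right witness---for odd $n\geq 5$, let each vertex independently pick red or blue uniformly, and return the clique on whichever color appears an even number of times (exactly one does, since $n$ is odd).

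For the inclusion, suppose $\calD\in \calG_{\text{esub}}$ and let $M = \{e_1,\ldots,e_{k+1}\}$ be a matching, assuming inductively that every smaller matching gives mutually independent edge-indicators. Write $e_{k+1}=\{u,v\}$ and $W = V\setminus\{u,v\}$. The event $\bigcap_{i\leq k}\{X_{e_i}=x_i\}$ is determined by $G[W]$, hence decomposes as a disjoint union $\bigsqcup_{H_W\in\mathcal{H}} \{G[W]=H_W\}$ for an appropriate family $\mathcal{H}$ of graphs on $W$. Edge-subgraph independence applied to each $H_W$ (and extended to the case $x_{k+1}=0$ by subtraction from $\Pr[G[W]=H_W]$) yields
\[
\Pr\bigl[X_{e_1}=x_1,\ldots,X_{e_{k+1}}=x_{k+1}\bigr] \;=\; \Pr\bigl[X_{e_1}=x_1,\ldots,X_{e_k}=x_k\bigr]\cdot \Pr[X_{e_{k+1}}=x_{k+1}],
\]
and the induction hypothesis factors the first factor on the right-hand side into $\prod_{i=1}^k \Pr[X_{e_i}=x_i]$.

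For the strictness, I would verify that the odd-coloring distribution lies in $\calG_{\text{mat}}\setminus \calG_{\text{esub}}$. An edge $\{u,v\}$ is present iff both endpoints share a color $c$ and $c$ appears an even total number of times; conditioning on $u,v$ both red (probability $1/4$), the remaining $n-2$ (odd) vertices produce an even additional red count with probability exactly $1/2$, so red/blue symmetry gives $\Pr[X_{\{u,v\}}]=1/4$. The same calculation applied to a matching $M$ with $|M|=k$ (using that $n-2k$ is odd and positive since $n$ is odd) shows $\Pr[\bigcap_{e\in M}X_e]=(1/4)^k$; because this also holds for every sub-matching $S\subseteq M$, inclusion-exclusion upgrades it to full mutual independence of $\{X_e\mid e\in M\}$. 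To falsify edge-subgraph independence, take $n=5$, an arbitrary edge $e=\{u,v\}$, set $W=V\setminus\{u,v\}$, and let $H_W=K_3$: jointly, $G[W]=K_3$ and $X_e$ force all five vertices into a single color, making that color's count $5$---odd, impossible. Thus $\Pr[G[W]=K_3,\, X_e]=0$ while $\Pr[G[W]=K_3]$ and $\Pr[X_e]$ are both strictly positive.

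The main subtlety I anticipate is on the strictness side, namely bootstrapping mutual independence of $\{X_e\mid e\in M\}$ from the ``all-ones'' identity $\Pr[\bigcap_{e\in S}X_e]=(1/4)^{|S|}$ on all sub-matchings. This is a standard Möbius/inclusion-exclusion step for $\{0,1\}$-valued variables, but it is worth stating explicitly to avoid conflating it with the strictly weaker pairwise-independence condition.
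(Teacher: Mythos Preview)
Your proposal is correct and follows essentially the same approach as the paper: the inclusion via decomposing the matching event over induced subgraphs on $W$ and applying edge-subgraph independence, and the strictness via the same odd-$n$ two-color clique distribution $SC(n)$ with the same ``$W$ a clique forces $e$ absent'' obstruction. Your treatment is in fact slightly more careful than the paper's---you explicitly handle all value tuples $(x_1,\ldots,x_{k+1})$ in the inclusion step and spell out the M\"obius/inclusion-exclusion upgrade from the all-ones identity to full mutual independence, whereas the paper leaves both implicit (the latter being harmless only because every sub-matching is itself a matching).
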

\begin{proof}
    To prove $\calG_{\text{esub}}\subseteq \calG_{\text{mat}}$, consider some edge-subgraph independent distribution. To prove that it is also matching independent, let $M=\{e_1,\ldots,e_k\}$ be some matching. We show for all $i\leq k$ that $Pr[X_{e_i}|X_{e_1},\ldots,X_{e_{i-1}}]=Pr[X_{e_i}]$. This implies \[Pr[X_{e_1} \textrm{ and } X_{e_2} \textrm{ and } \ldots \textrm{ and } X_{e_k}]=\prod_{i=1}^k Pr[X_{e_i}],\] and thus implies matching independence. Let $V'$ be the set of endpoints of the edges $e_1,\ldots,e_{i-1}$. Furthermore, let $\mathcal{G}(V')$ be the set of all possible graphs on $V'$. Then, by the law of total probability, we have
    \[Pr[X_{e_i}|X_{e_1},\ldots,X_{e_{i-1}}]=\sum_{G\in\mathcal{G}(V')}Pr[X_{e_i}|G]Pr[G|X_{e_1},\ldots,X_{e_{i-1}}].\]
    Due to edge-subgraph independence, $Pr[X_{e_i}|G]=Pr[X_{e_i}]$, and we thus have
    \[Pr[X_{e_i}|X_{e_1},\ldots,X_{e_{i-1}}]=Pr[X_{e_i}]\sum_{G\in\mathcal{G}(V')}Pr[G|X_{e_1},\ldots,X_{e_{i-1}}]=Pr[X_{e_i}],\]
    proving the desired claim.

    For the second part of the statement, $\calG_{\text{esub}}\not= \calG_{\text{mat}}$, we consider the following graph distribution $SC(n)$ on $n$ vertices for odd $n\geq 5$: every vertex independently and uniformly picks from the two colors red and blue. Since $n$ is odd, one color was picked by an even number of vertices. The vertices with that color form a clique, and no other edges are present.

    We first show that $SC(n)$ is matching independent: every edge individually occurs with probability $1/4$, since first both endpoints need to have the same color, and second this color must be the color picked by an even number of vertices. We prove that a matching of $k$ edges occurs with probability $(1/4)^k$. For the matching to occur, all $2k$ vertices must pick the same color, which happens with probability $(1/2)^{2k-1}$. Second, that color must end up to be the color picked by an even number of vertices, which has probability $1/2$. Altogether, we have probability $(1/2)^{2k-1+1}=(1/4)^k$.
    
    Finally, we show that $SC(n)$ is not edge-subgraph independent. Let $W$ be a set of $n-2$ vertices, and $e$ be the edge between the remaining $2$ vertices. If $W$ is a clique, $e$ cannot be present. Since both $W$ being a clique and $e$ being present have non-zero marginal probabilities, edge-subgraph independence cannot hold. 
\end{proof}

\begin{lemma}\label{lem:mat_pw_inclusion}
    $\calG_{\text{mat}}\subsetneq \calG_{\text{pw}}$, i.e., every matching independent distribution is pairwise independent, but there are some pairwise independent distributions that are not matching independent.
\end{lemma}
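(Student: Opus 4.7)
The inclusion $\calG_{\text{mat}} \subseteq \calG_{\text{pw}}$ is immediate from the definitions: any two non-adjacent edges $e,f$ form a matching $\{e,f\}$ of size two, and matching independence applied to this matching is exactly the statement that $X_e$ and $X_f$ are independent. So the only real work is to exhibit a pairwise independent distribution that is not matching independent.

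The plan is to adapt the classical pairwise-but-not-mutually-independent XOR construction of three random bits to edges of a graph. Fix $n \geq 6$ and pick three pairwise disjoint edges $e_1 = \{1,2\}, e_2 = \{3,4\}, e_3 = \{5,6\}$ in $K_n$; these form a matching of size $3$. Draw $Y_1, Y_2 \in \{0,1\}$ independently and uniformly, and set $Y_3 := Y_1 \oplus Y_2$. The random graph $G$ is then defined to consist of exactly those edges $e_i$ for which $Y_i = 1$, with no other edges present; all remaining potential edges of $K_n$ are deterministically absent.

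To verify pairwise independence, I would consider any two non-adjacent edges $f, g$ and do a simple case distinction. If $\{f,g\} \subseteq \{e_1,e_2,e_3\}$, say $\{f,g\} = \{e_i, e_j\}$ with $i \ne j$, then $(Y_i, Y_j)$ is uniform on $\{0,1\}^2$ by construction, so $X_f$ and $X_g$ are independent Bernoulli$(1/2)$ random variables. If at least one of $f, g$ is outside $\{e_1,e_2,e_3\}$, then that variable is deterministically $0$, so independence with the other holds trivially. Thus every pair of non-adjacent edges is independent.

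To show that matching independence fails, I apply the definition to the matching $M = \{e_1, e_2, e_3\}$. By construction, $Y_1 \oplus Y_2 \oplus Y_3 = 0$ almost surely, so the event $X_{e_1} = X_{e_2} = X_{e_3} = 1$ has probability zero, whereas matching independence together with the marginals $\Pr[X_{e_i}] = 1/2$ would force this joint probability to equal $1/8$. Hence the distribution lies in $\calG_{\text{pw}} \setminus \calG_{\text{mat}}$, completing the strict inclusion. There is no real obstacle here; the main thing to be careful about is handling the ``trivial'' non-matching edges in the pairwise check so that the example is actually a distribution on all of $K_n$, not just on the three-edge matching.
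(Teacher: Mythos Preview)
Your argument is correct. The inclusion is indeed immediate, and the XOR construction on three disjoint edges is a clean and valid witness that $\calG_{\text{pw}} \setminus \calG_{\text{mat}} \neq \varnothing$: every pair $(Y_i,Y_j)$ is uniform on $\{0,1\}^2$, constant random variables are independent of everything, and the deterministic parity constraint kills mutual independence on the three-edge matching.

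This is, however, a genuinely different route from the paper. The paper establishes the strict inclusion via the model $CM(n,q)$ of \Cref{def:CMmodel} and \Cref{claim:CMmodel}, which mixes a ``clique minus an isolated vertex'' regime with a ``uniform perfect matching'' regime and tunes $q$ so that pairwise independence holds while matching independence (already for a three-edge matching) fails. Your construction is considerably more elementary---no calculation is needed beyond the classical $Y_3 = Y_1 \oplus Y_2$ trick---and it settles \Cref{lem:mat_pw_inclusion} cleanly. What the paper's construction buys, and yours does not, is a second use: $CM(n,q)$ has every edge present with probability $1 - 2/n - \Theta(1/n^2)$ and is never connected, so the very same example also furnishes the lower bound on $\rho_{\text{pw}}(n)$ in \Cref{lem:pairwise_lower}. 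Your distribution has almost all marginals equal to $0$, so it carries no information about the connectivity threshold.
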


To prove this final strict inclusion lemma, we define the following distribution. This distribution will also be useful later to prove a lower bound on $\rho_{pw}$. 
\begin{definition}
    \label{def:CMmodel}
    $CM(n,q)$ for $n$ even is the following distribution over the graphs on $n$ vertices: to sample $G \sim CM(n,q)$, with probability $q$ we sample from the \emph{clique regime}, and otherwise from the \emph{matching regime}. In the clique regime, we pick a vertex $x\in [n]$ uniformly at random and add all edges $e$ with $x \notin e$ to $G$. In the matching regime, we pick a perfect matching on $[n]$ uniformly at random and add its edges to $G$.
\end{definition}
\begin{claim}
    \label{claim:CMmodel}
    There exists $0 \leq q(n) \leq 1$ with
    $$ q(n) = 1 - \Theta \Big( \frac{1}{n^2} \Big) $$
    such that $CM(n,q(n))$ for $n$ even is pairwise independent but not matching independent, and the probability of each edge is 
    $$ p(n) = 1 - \frac{2}{n} - \Theta \Big( \frac{1}{n^2} \Big).$$
\end{claim}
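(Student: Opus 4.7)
The plan is to choose $q = q(n)$ so that $CM(n,q)$ is pairwise independent, then show that this same $q$ forces matching independence to fail. By the symmetry of both regimes, every edge has the same marginal probability and every non-adjacent pair of edges has the same joint probability, so the pairwise independence condition reduces to a single scalar equation in $q$.

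First, I compute the four relevant probabilities. In the clique regime, a fixed edge $e$ is present iff the deleted vertex avoids both endpoints of $e$, giving probability $(n-2)/n$; a fixed non-adjacent pair $e,f$ is simultaneously present iff the deleted vertex avoids all four endpoints, giving probability $(n-4)/n$. In the matching regime, using $(n-1)!!$ for the number of perfect matchings on $n$ vertices, a fixed edge appears with probability $1/(n-1)$ and a fixed non-adjacent pair with probability $1/((n-1)(n-3))$. Setting $t = 1-q$ and combining via the law of total probability, the condition $\Pr[X_e]^2 = \Pr[X_e \cap X_f]$ becomes a quadratic
\[ At^2 + Bt + C = 0 \]
whose coefficients, after expansion with $a = 1-2/n$, $b = 1/(n-1)$, $a' = 1-4/n$, $b' = 1/((n-1)(n-3))$, are $A = (a-b)^2 = 1 - 6/n + O(1/n^2)$, $B = (a'-b') - 2a(a-b) = -1 + 6/n + O(1/n^2)$ (so that the crucial cancellation $A + B = O(1/n^2)$ holds), and $C = a^2 - a' = 4/n^2$.

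The smaller real root of this quadratic expands as
\[ t(n) \;=\; \tfrac{1}{2}\bigl(1 - \sqrt{1 - 16/n^2 + O(1/n^3)}\bigr) \;=\; 4/n^2 + O(1/n^3), \]
which lies in $(0,1)$ for all sufficiently large even $n$, and yields $q(n) = 1 - \Theta(1/n^2)$. Substituting back into $p(n) = a - t(a-b)$ and using $a - b = 1 - O(1/n)$ immediately gives $p(n) = 1 - 2/n - \Theta(1/n^2)$, as required. Pairwise independence then holds for all non-adjacent pairs by the symmetry already noted.

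Finally, I verify that matching independence fails. In the clique regime the random graph is $K_{n-1}$ together with one isolated vertex and hence contains no perfect matching at all. Thus for any fixed perfect matching $M_0$ of $[n]$,
\[ \Pr[M_0 \subseteq G] \;=\; (1 - q(n)) \cdot \frac{1}{(n-1)!!} \;=\; \Theta(n^{-2})\cdot \Theta(n^{-n/2}), \]
which is super-exponentially small in $n$. Matching independence would instead force $\Pr[M_0 \subseteq G] = p(n)^{n/2} \to e^{-1} > 0$, a clear contradiction for all large $n$. The main obstacle is keeping the bookkeeping in the quadratic under control — in particular, showing that the cancellation $A + B = O(1/n^2)$ is robust enough to produce a positive real root of order exactly $\Theta(1/n^2)$. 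For the finitely many small even $n$ where the asymptotic expansion may not yet be tight, existence of $q(n) \in [0,1]$ can be confirmed by solving the quadratic explicitly, and the failure of matching independence can be witnessed either by a specific perfect matching as above or, more simply, by a single matching of size $3$.
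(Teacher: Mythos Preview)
Your proposal is correct and follows essentially the same route as the paper: compute the single-edge and two-edge probabilities in each regime, solve the resulting quadratic for pairwise independence to obtain $q(n)=1-\Theta(1/n^2)$, and then exhibit a matching on which the product rule fails. The only substantive difference is that you witness the failure of matching independence via a full perfect matching (where the true probability is super-exponentially small while the product $p(n)^{n/2}\to e^{-1}$), whereas the paper uses a matching of size~$3$ and compares the $1/n^2$ coefficients; both arguments are valid, and you in fact mention the size-$3$ alternative yourself.
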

The proof of this claim is straightforward and just requires some calculations, so we defer it to \Cref{app:CModelProof}.

\begin{proof}[Proof of \Cref{lem:mat_pw_inclusion}]
    Clearly $\calG_{\text{mat}}\subseteq \calG_{\text{pw}}$ as matching independence implies pairwise independence by definition, since two vertex-disjoint edges $e,f$ are a matching.

    To see that $\calG_{\text{mat}}\not= \calG_{\text{pw}}$, recall that by \Cref{claim:CMmodel}, there exists some $q(n)$ such that $CM(n,q)$ is pairwise independent but not matching independent.
\end{proof}

\begin{proof}[Proof of \Cref{thm:hierarchy}]
    \Cref{thm:hierarchy} now follows directly from \Cref{lem:col_sub_inclusion,lem:sub_esub_inclusion,lem:esub_mat_inclusion,lem:mat_pw_inclusion}.
\end{proof}

\section{\texorpdfstring{Bounds on $\rho$}{Bounds on Rho}}\label{sec:bounds}
In this section we prove \Cref{thm:main}, showing lower and upper bounds on $\rho$ for our various models of independence.
Note that by the definition of $\rho$ as in \Cref{sec:detailed-results}, any lower bound on $\rho_X$ for some independence condition $X$ also holds for $\rho_Y$ for some weaker independence condition $Y$, i.e., one such that $\calG_{\text{X}}\subseteq \calG_{\text{Y}}$. Conversely, any upper bound on $\rho_Y$ also holds for $\rho_X$.
\subsection{Pairwise Independence}
\begin{lemma}
    \label{lem:pairwise_lower}
    For even $n\geq 4$, we have $\rho_{\text{pw}}(n)\geq 1-\frac{2}{n}-\Theta(\frac{1}{n^2})$.
\end{lemma}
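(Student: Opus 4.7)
The plan is to take the distribution $CM(n,q(n))$ from \Cref{def:CMmodel} and \Cref{claim:CMmodel} and show directly that it witnesses the desired lower bound on $\rho_{\text{pw}}(n)$. By \Cref{claim:CMmodel}, there exists $q(n) = 1 - \Theta(1/n^2)$ such that $CM(n,q(n))$ is pairwise independent (but not matching independent) and each edge is present with marginal probability $p(n) = 1 - 2/n - \Theta(1/n^2)$. So $CM(n,q(n)) \in \calG_{\text{pw}}(n, p(n))$, and it suffices to argue that $CM(n,q(n))$ produces a disconnected graph with probability $1$.

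For this, I would inspect the two regimes separately. In the clique regime, a uniformly random vertex $x$ is chosen and every edge avoiding $x$ is added; this yields a clique on $[n]\setminus\{x\}$ together with the isolated vertex $x$, which is disconnected for $n \geq 2$. In the matching regime, the sampled graph is a perfect matching, which for $n \geq 4$ has $n/2 \geq 2$ components and is therefore disconnected. Hence the sampled graph is disconnected with probability $1$ regardless of which regime is chosen.

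To conclude the lower bound, I would appeal to the trivial monotonicity of the defining set $S_n = \{p \in [0,1] : \forall \calD \in \calG_{\text{pw}}(n,p),\ \Pr[G \sim \calD \text{ connected}] > 0\}$: if $p' \leq p$ then $\calG_{\text{pw}}(n,p) \subseteq \calG_{\text{pw}}(n,p')$, so $S_n$ is upward-closed. Since $CM(n,q(n)) \in \calG_{\text{pw}}(n,p(n))$ and is never connected, we have $p \notin S_n$ for every $p \leq p(n)$, and therefore $\rho_{\text{pw}}(n) = \inf S_n \geq p(n) = 1 - 2/n - \Theta(1/n^2)$.

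The substantive work is already absorbed into \Cref{claim:CMmodel}, whose proof (deferred to the appendix) requires computing marginal and pairwise probabilities under the two regimes and choosing $q(n)$ so that pairwise independence holds. The only mildly delicate point in the present proof is verifying that both regimes individually yield disconnected graphs for all even $n \geq 4$, which the paragraph above handles. No further machinery, no local-lemma-style argument, and no calculation beyond invoking \Cref{claim:CMmodel} is needed.
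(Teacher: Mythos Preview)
Your proposal is correct and follows essentially the same approach as the paper's proof: invoke \Cref{claim:CMmodel} to obtain a pairwise independent $CM(n,q(n))$ with edge probability $1-2/n-\Theta(1/n^2)$, observe that both the clique regime (isolated vertex) and the matching regime (perfect matching on $n\ge 4$ vertices) are always disconnected, and conclude the lower bound. Your added remark on the upward-closedness of $S_n$ just makes explicit what the paper leaves implicit.
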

\begin{proof}
Consider the distribution $CM(n,q)$ as defined in \Cref{def:CMmodel} with $q= 1 - \Theta \Big( \frac{1}{n^2}\Big)$ chosen such that $CM(n,q)$ is pairwise independent, which can be done by \Cref{claim:CMmodel}. The marginal edge probability $p$ in $CM(n,q)$ is then $p=1 - \frac{2}{n} - \Theta \Big( \frac{1}{n^2} \Big)$ . Note that the probability of $G \sim CM(n,q)$ being connected is $0$, since in the clique regime there is always an isolated vertex, and in the matching regime $G$ only consists of a perfect matching. Thus, the threshold $\rho_{\text{pw}}$ for non-zero probability of connectivity in the pairwise independence model is at least $p = 1 - \frac{2}{n} - \Theta \Big( \frac{1}{n^2} \Big)$. 
\end{proof}

\begin{lemma}\label{lem:pairwise_upper}
    For any $n$, we have $\rho_{\text{pw}}(n)\leq 1-\frac{2}{n}$.
\end{lemma}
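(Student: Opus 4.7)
The plan is to bypass the pairwise independence assumption entirely and rely only on linearity of expectation together with the classical extremal fact that a disconnected graph on $n$ vertices has at most $\binom{n-1}{2}$ edges (achieved by $K_{n-1}$ together with an isolated vertex).

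Concretely, fix any $p > 1 - 2/n$ and let $\calD \in \calG_{\text{pw}}(n,p)$. Writing $|E(G)|$ for the number of edges of $G \sim \calD$, linearity of expectation gives
\[
\E[|E(G)|] \;=\; \sum_{e \in \binom{[n]}{2}} \Pr[X_e] \;\geq\; \binom{n}{2}\,p \;>\; \binom{n}{2}\!\left(1 - \tfrac{2}{n}\right) \;=\; \tfrac{n(n-1)}{2}\cdot\tfrac{n-2}{n} \;=\; \binom{n-1}{2}.
\]
Since $|E(G)|$ is an integer-valued random variable with expectation strictly greater than $\binom{n-1}{2}$, it must take a value strictly greater than $\binom{n-1}{2}$ with positive probability. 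But any graph on $n$ vertices with more than $\binom{n-1}{2}$ edges is connected, so $\Pr[G \text{ is connected}] > 0$. As this holds for every $p > 1 - 2/n$, the infimum definition of $\rho_{\text{pw}}(n)$ yields $\rho_{\text{pw}}(n) \leq 1 - 2/n$.

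There is essentially no obstacle; the only thing to observe is that the bound is purely combinatorial and does not invoke pairwise independence at all. This is consistent with the fact noted in the proof techniques section: the upper bound for pairwise (and matching) independence just combines linearity of expectation with the extremal count $\binom{n-1}{2}$, and, as a byproduct, gives the same upper bound for every weaker-or-equal family in the hierarchy (so it in particular also bounds $\rho_{\text{mat}}(n)$ from above by $1 - 2/n$).
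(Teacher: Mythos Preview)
The proposal is correct and takes essentially the same approach as the paper: both use linearity of expectation to get $\E[|E(G)|]>\binom{n-1}{2}$ and then invoke the extremal fact that disconnected $n$-vertex graphs have at most $\binom{n-1}{2}$ edges, without ever using pairwise independence.
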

\begin{proof}
If the minimum marginal edge probability $p$ in any graph distribution (not even necessarily pairwise independent) is larger than $1-\frac{2}{n}$, the expected number of edges $\E[\sum_e X_e]$ is larger than $(1-\frac{2}{n})\binom{n}{2}=\binom{n-1}{2}$. Since any disconnected graph contains at most $\binom{n-1}{2}$ edges, the graph must be connected with non-zero probability.
\end{proof}

\subsection{Matching Independence}
We have no general bounds specifically holding for matching independence. Matching independence is a condition that is surprisingly difficult to exploit. We thus state the following corollary, following directly from \Cref{lem:pairwise_upper} and \Cref{lem:subgraph_lower}.

\begin{corollary}
    \label{cor:matching_bounds}
    For any $n\geq 2$, we have $\frac{1}{2}(1-\tan^2\frac{\pi}{2n})\leq \rho_{\text{mat}}(n)\leq 1-\frac{2}{n}$.
\end{corollary}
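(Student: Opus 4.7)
The plan is to obtain both bounds for free from the hierarchy established in \Cref{thm:hierarchy} together with the two already-proven bounds cited in the excerpt. The key observation, already recorded at the start of \Cref{sec:bounds}, is the monotonicity principle: if $\calG_X \subseteq \calG_Y$, then the defining condition for $\rho_Y(n)$ (namely, positive connectivity probability for \emph{all} distributions at marginal $p$) quantifies over a larger family and is therefore strictly harder to satisfy, which gives $\rho_X(n)\le \rho_Y(n)$. From \Cref{thm:hierarchy} we have in particular $\calG_{\text{sub}}\subseteq \calG_{\text{mat}}\subseteq \calG_{\text{pw}}$, so $\rho_{\text{sub}}(n)\le \rho_{\text{mat}}(n)\le \rho_{\text{pw}}(n)$.

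For the upper bound, \Cref{lem:pairwise_upper} gives $\rho_{\text{pw}}(n)\le 1-2/n$, and monotonicity immediately yields $\rho_{\text{mat}}(n)\le 1-2/n$. For the lower bound, \Cref{lem:subgraph_lower} (stated as $\rho_{\text{sub}}(n)=\frac{1}{2}(1-\tan^2\frac{\pi}{2n})$ in \Cref{thm:main}) furnishes a subgraph-independent distribution with marginal edge probability $\frac{1}{2}(1-\tan^2\frac{\pi}{2n})$ that is disconnected almost surely; because subgraph independence implies matching independence, this same distribution witnesses $\rho_{\text{mat}}(n)\ge \frac{1}{2}(1-\tan^2\frac{\pi}{2n})$.

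There is essentially no obstacle here: the proof is a one-line invocation of the two referenced lemmas together with the containments from \Cref{thm:hierarchy}. The only thing worth emphasizing in the write-up is \emph{why} we are content with a corollary rather than an independent argument, namely the remark already made in the text that matching independence is a condition which is surprisingly hard to exploit beyond what pairwise or subgraph independence already give.
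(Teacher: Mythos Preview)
Your proposal is correct and matches the paper's own treatment: the corollary is stated as following directly from \Cref{lem:pairwise_upper} and \Cref{lem:subgraph_lower} via the monotonicity principle recorded at the start of \Cref{sec:bounds}, which is exactly what you do.
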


The lower bound in \Cref{cor:matching_bounds} can be improved slightly when $n$ is divisible by $8$, which in particular shows that $\rho_{mat}(n) \neq \rho_{sub}(n)$ for infinitely many $n$, illustrating again the different behavior of the connectivity thresholds for different independence conditions.
\begin{lemma}
\label{lem:matching_upper_bound_div_8}
For any $n \in \mathbb{N}$ that is divisible by $8$, we have $\rho_{mat}(n) \geq \frac{1}{2}$.
\end{lemma}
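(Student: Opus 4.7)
The plan is to establish the bound by exhibiting, for any $p<1/2$, a matching-independent distribution on graphs on $n$ vertices (with $n$ divisible by $8$) whose marginal edge probability is at least $p$ and which is disconnected with probability $1$. As a preliminary reduction, I would observe that matching independence is preserved under independently thinning each edge of the sampled graph (removing each edge with some fixed probability $\eta\in[0,1]$): the moment along any matching $M$ simply gets multiplied by $(1-\eta)^{|M|}$ and hence still factors as a product of the new marginals. Thinning only decreases marginals and cannot create connectivity, so it suffices to construct a \emph{single} matching-independent distribution, disconnected with probability $1$, whose marginal edge probability is arbitrarily close to $1/2$ from below.

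To motivate the construction, consider first the natural candidate: the two-color coloring model with independent fair coin flips, whose graph is a union of cliques on the two color classes. This has marginal exactly $1/2$ but puts $2^{1-n}$ total probability on the monochromatic colorings, each of which produces the connected graph $K_n$. A short Fourier-analytic computation on $\{\pm1\}^V$ (using $Z_v\in\{\pm1\}$ for the color of $v$ and $\hat P(S)$ for the Fourier coefficient of $P$ at the character $\chi_S=\prod_{v\in S}Z_v$) rules out \emph{every} coloring model as a candidate: matching independence together with marginal $1/2$ forces $\hat P(S)=0$ for every nonempty even-sized $S$, and then the constraints $P(\mathbf+)=P(\mathbf-)=0$ imply the two incompatible identities $\sum_{|S|\text{ odd}}\hat P(S)=-1$ and $\sum_{|S|\text{ odd}}\hat P(S)=+1$. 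Consequently, to combine matching independence with always-disconnectedness near marginal $1/2$ one must leave the coloring-model class, in the same spirit as the $SC(n)$ construction from the proof of \Cref{lem:esub_mat_inclusion}.

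For $n$ divisible by $8$ the construction I would use is block-wise. Partition $[n]$ into $n/8$ blocks of size $8$, sample an $SC$-style matching-independent, always-disconnected distribution within each block whose marginal is as close to $1/2$ as possible, and glue the blocks with additional independent randomness designed to preserve always-disconnectedness at the global level. The block size $8=2^3$ is exploited through the $\mathbb F_2^3$-structure on the block, which provides three linearly independent parity bits---precisely what is needed to simultaneously cover all even-sized $S$'s within a block up to $|S|=8$ (the perfect-matching size of the block). Matching independence of the composite distribution is then verified by decomposing any matching $M$ into its within-block parts $M_1,\ldots,M_{n/8}$ and its between-block part $M_\times$, applying matching independence inside each block together with the independence of the random bits associated to distinct blocks.

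The main obstacle is the within-block construction on $8$ vertices: by the Fourier argument above it cannot be a coloring model, so it must make genuine use of the matching-independence-specific freedom that $SC(n)$ exploits for odd~$n$. Verifying the moment identity $\mathbb E[\prod_{e\in M}(2X_e-1)]=0$ for matchings of every size $k\in\{1,2,3,4\}$ in the block is a finite set of equations, but these equations must all be satisfied simultaneously while keeping the support of the block distribution inside the disconnected graphs, and this is exactly what divisibility by $8$ buys us (any smaller block size underdetermines the relevant Fourier coefficients or forces a connected realization). Once the block distribution is in hand, extending matching independence to the full graph via the block decomposition is a routine case analysis, and the marginal and disconnectedness guarantees transfer immediately from the block level.
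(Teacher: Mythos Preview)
Your proposal outlines an approach but never actually carries it out, and the two places where the real work would have to happen are precisely the places you leave blank. First, the ``within-block construction on 8 vertices'': you acknowledge this is the main obstacle, invoke an $\mathbb{F}_2^3$ structure, and assert that the relevant moment equations can be satisfied, but you never write down a distribution. The $SC$ construction you cite has marginal $1/4$, not $1/2$, so it is not a template here, and your Fourier argument (which, incidentally, only addresses two-colour product measures, not all coloring models) gives no hint of what the replacement should be. Second, the ``gluing'': you need every cross-block edge to have marginal close to $1/2$, you need matching independence for matchings that mix within-block and cross-block edges, and you need the global graph to be disconnected with probability $1$. Saying this is ``a routine case analysis'' once the block distribution is in hand is not credible without specifying the cross-block rule; indeed, most natural choices (independent Bernoulli cross-edges, or cross-edges determined by per-block parity bits) either destroy always-disconnectedness or fail matching independence for mixed matchings.

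The paper's proof is not block-wise at all. It is a single global construction: each vertex is coloured red/blue, but the colouring is drawn from a \emph{non-product} signed-type measure with weight $w(c)=a_{r(c)}2^{-n}$ where $a_k=1-\tfrac12 i^k-\tfrac12(-i)^k$, and the leftover mass goes to the empty graph. The divisibility-by-$8$ condition enters through the binomial identity $\sum_c w(c)=1-2^{-n}\bigl(\tfrac12(1+i)^n+\tfrac12(1-i)^n\bigr)$, which equals $1-2^{-n/2}<1$ precisely because $(1+i)^4=-4$ forces $(1+i)^n=2^{n/2}$ when $8\mid n$; it has nothing to do with an $\mathbb{F}_2^3$ structure on a block. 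Matching independence with marginal $1/2$ is then checked directly by a short parity argument on the number of red vertices outside a fixed matching edge. Your plan, as written, does not supply either of the two missing constructions and so does not constitute a proof.
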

\begin{proof}
We show this by giving a matching independent distribution based on a coloring of the vertices. Consider all possible red-blue colorings $c:[n]\rightarrow \{red,blue\}$ of the vertices, and assign to them the weight function $w(c) = a_{red(c)} 2^{-n}$, where $red(c)$ is the number of red vertices in coloring $c$ and
\[a_k = \begin{cases}
    1,         & \text{if } k \equiv 1 \mod 2,\\
    2,              & \text{if } k \equiv 2 \mod 4,\\
    0,                        & \text{if } k \equiv 0 \mod 4.\\
\end{cases}\]
Noting that
$$ a_k = 1^k - \frac{1}{2} i^k - \frac{1}{2} (-i)^k,$$
where $i$ is the imaginary unit, we get that
$$ \sum_{\text{all colorings }c} w(c) = 2^{-n} \sum_{k \in \{0\} \cup [n]} a_k \binom{n}{k} = 2^{-n} \Big( (1 + 1)^n - \frac{1}{2} (1+i)^n - \frac{1}{2} (1-i)^n \Big)$$
by binomial expansion. Note that $ (1+i)^4 = (2i)^2 = -4$ and $ (1-i)^4 = (-2i)^2 = -4$, so when $n$ is divisible by $8$, we have
$$ \sum_{\text{all colorings }c} w(c) = 2^{-n} \Big( 2^n - (-4)^{n/4} \Big) = 1 - 2^{-n/2} < 1.$$
Thus, we can define a probability distribution over the colorings such that we color the vertices according to coloring $c$ with probability $w(c)$, and we leave all vertices uncolored with probability $2^{-n/2}$. We then obtain our graph by adding a clique on all red vertices and a clique on all blue vertices (in the case when the vertices are uncolored, we leave the graph empty). Note that the resulting graph is never connected since $a_n = a_0 = 0$, which means it cannot be the case that all vertices are blue or that all of them are red.

We now show that this distribution is matching independent. That is, for any non-empty matching $M$ on $K_n$, we show that $Pr[M \in G] = 2^{-|M|}$ if $G$ is sampled from the distribution described above. Let $e \in M$ and consider the colorings of the vertices that are not in $e$ which yield the edges in $M \setminus\{e\}$ to be present in $G$. Note that for $f \in M \setminus e$ to be present, we need both vertices in $f$ to have the same color. Thus, there are $2^{n-1-|M|}$ different colorings of the vertices not in $e$ such that all edges in $M\setminus e$ are present. Let $c$ be one such (partial) coloring. If $red(c)$ is odd, then there are two ways to complete the coloring such that $e$ is present (either both of its endpoints are red, or they are both blue) and each of the resulting colorings occurs with probability $2^{-n}$. If $red(c) \equiv 0 \mod 4$, then the only way to complete this coloring so that $e$ is present and the coloring occurs with positive ($2\cdot 2^{-n}$) probability is to color both endpoints of $e$ red. Similarly, if $red(c) \equiv 2 \mod 4$, then the only way to complete this coloring so that $e$ is present and the coloring occurs with positive ($2\cdot 2^{-n}$) probability is to color both endpoints of $e$ blue. Thus,
$$ Pr[M \in G] = 2^{n-1-|M|} \cdot 2 \cdot 2^{-n} = 2^{-|M|},$$
as desired.
\end{proof}

\subsection{Edge-Subgraph Independence}
We have no lower bound specifically for edge-subgraph independence. We inherit this bound from the stricter subgraph independence, i.e., \Cref{lem:subgraph_lower}.
\begin{corollary}
    \label{cor:edge-subgraph_lower}
    For any $n\geq 2$, we have $\rho_{\text{esub}}(n)\geq \frac{1}{2}(1-\tan^2\frac{\pi}{2n})$.
\end{corollary}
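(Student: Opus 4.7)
The plan is to derive the corollary by a one-line soft argument from the hierarchy. Recall the observation made at the start of Section~\ref{sec:bounds}: whenever $\calG_{\text{X}} \subseteq \calG_{\text{Y}}$, the infimum defining $\rho_Y(n)$ is taken over a strictly richer family of candidate distributions than the one defining $\rho_X(n)$, so $\rho_X(n) \le \rho_Y(n)$. By Lemma~\ref{lem:sub_esub_inclusion} (the subgraph-vs-edge-subgraph part of Theorem~\ref{thm:hierarchy}) we have $\calG_{\text{sub}} \subseteq \calG_{\text{esub}}$, which yields $\rho_{\text{esub}}(n) \ge \rho_{\text{sub}}(n)$.

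Concretely, I would invoke the lower bound construction of Day, Falgas-Ravry, and Hancock (cited here as Lemma~\ref{lem:subgraph_lower}, i.e.\ \cite[Thm.~16]{day2020oneindependent}), which exhibits for every $n\ge 2$ a subgraph independent distribution on $K_n$ with marginal edge probability exactly $\tfrac{1}{2}(1-\tan^2\tfrac{\pi}{2n})$ that is disconnected almost surely. Because every subgraph independent distribution is \emph{a fortiori} edge-subgraph independent, this same distribution lies in $\calG_{\text{esub}}(n, \tfrac{1}{2}(1-\tan^2\tfrac{\pi}{2n}))$ and witnesses the claimed lower bound on $\rho_{\text{esub}}(n)$.

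There is essentially no obstacle in this proof; the only thing to verify is the direction of the inclusion, which is easy to get backwards: a tighter independence condition ($\text{sub}$) shrinks $\calG$, hence lowers $\rho$, so an inherited lower bound travels from the stronger to the weaker condition, exactly as we need here. Thus the corollary follows immediately from Theorem~\ref{thm:hierarchy} together with Lemma~\ref{lem:subgraph_lower}, and no further construction is necessary.
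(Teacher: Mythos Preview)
Your proposal is correct and follows exactly the paper's approach: the corollary is stated as an immediate inheritance from Lemma~\ref{lem:subgraph_lower} via the inclusion $\calG_{\text{sub}}\subseteq\calG_{\text{esub}}$ of Theorem~\ref{thm:hierarchy}. No additional argument is needed.
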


\begin{lemma}
    \label{lem:edge-subgraph_upper}
    For any $n$, we have $\rho_{\text{esub}}(n)\leq \frac{3}{4}$.
\end{lemma}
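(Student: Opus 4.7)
The plan is to apply the Lovász Local Lemma (\Cref{lem:LLL}) to the edges of a fixed Hamiltonian cycle in $K_n$. The cases $n \le 2$ are trivial, so assume $n \ge 3$ and fix a cyclic ordering $v_1, v_2, \ldots, v_n$ of the vertices. Let $C = \{e_1, \ldots, e_n\}$ with $e_i = \{v_i, v_{i+1 \bmod n}\}$ denote the corresponding Hamiltonian cycle. Since a graph containing a spanning cycle is connected, it suffices to show that for every $p > 3/4$ and every $\calD \in \calG_{\text{esub}}(n,p)$, the event $\bigwedge_{i=1}^n X_{e_i}$ has positive probability.

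To set up the LLL, I would take the good events to be $X_{e_1}, \ldots, X_{e_n}$. By assumption each satisfies $\Pr[X_{e_i}] \ge p > 3/4$. Two cycle edges $e_i$ and $e_j$ are vertex-disjoint unless $j \in \{i-1, i, i+1\} \pmod n$, so each $X_{e_i}$ has at most $d = 2$ potentially dependent partners in the family. The key point is that edge-subgraph independence (\Cref{def:edge-subgraph}) in fact gives the required independence of $X_{e_i}$ from the \emph{joint} distribution of all cycle-edge events at disjoint edges: if $S \subseteq C$ consists of edges vertex-disjoint from $e_i$ and $W = \bigcup_{f \in S} V(f) \subseteq V \setminus V(e_i)$, then any Boolean combination of $\{X_f : f \in S\}$ is determined by $G[W]$, and the product formula in \Cref{def:edge-subgraph} applied to each fixed graph $H_W$ on $W$ extends, by summing over all such $H_W$, to independence of $X_{e_i}$ from the $\sigma$-algebra generated by $G[W]$.

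With these hypotheses verified, \Cref{lem:LLL} with $d = 2$ requires $p > 1 - (d-1)^{d-1}/d^d = 3/4$, which is exactly our assumption. The LLL therefore yields $\Pr\!\left[\bigwedge_{i=1}^n X_{e_i}\right] > 0$, so $G$ contains a Hamiltonian cycle and is connected with positive probability. By the infimum definition of $\rho_{\text{esub}}(n)$, this proves $\rho_{\text{esub}}(n) \le 3/4$.

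I do not expect a substantial obstacle. The only point that requires a moment of care is the passage from the pairwise product formula in \Cref{def:edge-subgraph} (which literally speaks about a single edge-event and a single induced-subgraph event) to the honest mutual-independence statement needed by LLL, but this is immediate once one observes that the statement for every fixed $H_W$ is equivalent to independence of $X_{e_i}$ from the entire induced subgraph on $W$.
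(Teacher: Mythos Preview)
Your proof is correct and follows essentially the same approach as the paper: apply the Lov\'asz Local Lemma with $d=2$ to the edges of a spanning subgraph of $K_n$ in which each edge is adjacent to at most two others. The paper uses a Hamiltonian path rather than a Hamiltonian cycle, but this is immaterial; your extra paragraph justifying why edge-subgraph independence yields the mutual independence required by the LLL is a welcome clarification that the paper leaves implicit.
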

\begin{proof}
    We show that any edge-subgraph independent distribution with minimum marginal edge probability $p>\frac{3}{4}$ is connected with non-zero probability. To achieve this, we use the \lovasz Local Lemma, as stated in \Cref{lem:LLL}.

    We pick the edges of any Hamiltonian path $e_1,\ldots, e_{n-1}$ of $K_n$ and consider their corresponding events $X_{e_1},\ldots,X_{e_{n-1}}$. Since we have edge-subgraph independence, each of these events depends on only at most $d=2$ of the other events (the ones corresponding to the neighboring edges). Furthermore, the probability of each of these events is at least $p>\frac{3}{4}$. By \Cref{lem:LLL}, since $p> \frac{3}{4}=1-\frac{1^1}{2^2}$, with non-zero probability all of the events $X_{e_1},\ldots,X_{e_{n-1}}$ happen, all edges of the Hamiltonian path are present, and thus the graph is connected.
\end{proof}

\subsection{Subgraph Independence}
The lower bound on $\rho_{sub}$ was given in~\cite[Thm. 16]{day2020oneindependent}, but we provide the proof here for completeness.

\begin{lemma}[{\cite[Thm. 16]{day2020oneindependent}}]\label{lem:subgraph_lower}
    For any $n\geq 2$, we have $\rho_{\text{sub}}(n)\geq \frac{1}{2}(1-\tan^2\frac{\pi}{2n})$.
\end{lemma}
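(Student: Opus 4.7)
The plan is to construct a ``complex-weighted $2$-color coloring model'' that is never connected but whose marginal edge probability is exactly $\tfrac{1}{2}(1 - \tan^2(\pi/(2n)))$. Specifically, set $t := \tfrac{1}{2}\tan(\pi/(2n))$ and $w_0 := \tfrac{1}{2} + it$, $w_1 := \tfrac{1}{2} - it$, so $w_0 + w_1 = 1$. Writing $w_0 = re^{i\theta}$ with $\theta = \pi/(2n)$ and $r^2 = \tfrac{1}{4} + t^2$, one sees by design that $w_0^n + w_1^n = 2r^n\cos(n\theta) = 0$. Each coloring $c : [n] \to \{0,1\}$ is assigned the formal weight $\prod_v w_{c(v)}$, and its associated graph is the disjoint union of cliques on the two color classes; aggregating weights over colorings producing the same graph yields the candidate distribution.

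The first step is to verify that this is a bona fide probability distribution. For the graph corresponding to a partition $[n] = S \sqcup ([n]\setminus S)$ with $|S| = s$, the two compatible colorings contribute $w_0^s w_1^{n-s} + w_0^{n-s} w_1^s = 2r^n \cos((2s-n)\theta)$; since $|2s-n|\theta \leq \pi/2$, this is non-negative, and vanishes only at $s \in \{0, n\}$. The total mass is $(w_0 + w_1)^n = 1$. Hence the distribution is well-defined, and the sampled graph is almost surely a disjoint union of two non-empty cliques, and in particular disconnected. The marginal probability that a fixed edge is present (both endpoints receive the same color) is $w_0^2 + w_1^2 = \tfrac{1}{2} - 2t^2 = \tfrac{1}{2}(1 - \tan^2(\pi/(2n)))$, as required.

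The final step is to verify subgraph independence. For disjoint $V', W' \subseteq [n]$ and graphs $H_{V'}, H_{W'}$, each of $\Pr[G[V'] = H_{V'}]$, $\Pr[G[W'] = H_{W'}]$, and $\Pr[G[V'] = H_{V'} \wedge G[W'] = H_{W'}]$ is a fixed polynomial in $w_0$ after eliminating $w_1 = 1 - w_0$. For real $w_0 \in [0,1]$ the construction is a standard $2$-color coloring model, so the factorization $\Pr[G[V'] = H_{V'} \wedge G[W'] = H_{W'}] = \Pr[G[V'] = H_{V'}] \cdot \Pr[G[W'] = H_{W'}]$ holds by \Cref{lem:col_sub_inclusion}. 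Two polynomials in one variable that agree on $[0,1] \subset \mathbb{C}$ agree identically, so the factorization extends to our complex value $w_0 = \tfrac{1}{2} + it$. The main delicate point of the argument is the non-negativity check; it pins down the precise value of $t$, and hence the tight bound, via the combined constraints $w_0^n + w_1^n = 0$ and $|2s-n|\theta \le \pi/2$. Subgraph independence itself is then essentially free from the polynomial identity argument.
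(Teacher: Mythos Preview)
Your proof is correct and essentially identical to the paper's: both construct the same ``complex $2$-coloring'' with weight $z=\tfrac12+\tfrac{i}{2}\tan\tfrac{\pi}{2n}$ (your $w_0$), verify non-negativity and $p_{K_n}=0$ via the polar form $2r^n\cos((2s-n)\theta)$, and establish subgraph independence by the polynomial-identity trick (it holds for real $z\in[0,1]$ where the model is a genuine coloring model, hence for all $z$). The only cosmetic difference is that you check the normalization $(w_0+w_1)^n=1$ and edge density $w_0^2+w_1^2$ directly, whereas the paper folds these into the same polynomial-identity argument.
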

Towards a proof of this lemma, we build a subgraph independent distribution on $n\geq 2$ vertices where the marginal probability of every edge occuring equals $p=\frac12 (1- \tan^2\frac\pi{2n})$, and the probability that a graph sampled from the distribution is connected equals $0$.

Let $\mathcal{K}_n$ be the set of all graphs on vertex set $[n]$ that have at most two connected components and where every connected component is a clique. We will construct a probability distribution on $\mathcal{K}_n$ whose support is all such graphs except $K_n$ and that satisfies the condition of subgraph independence.

Let $z=\frac12 + \frac{\iu}2 \tan \frac\pi{2n}$. Note that $z$ is a complex number. For any graph $G\in \mathcal{K}_n$ we define \begin{equation}\label{eq:defsgindep}p_G:=z^a(1-z)^{n-a}+(1-z)^a z^{n-a},\end{equation}
where $a$ denotes the size of the largest clique in $G$.

One can note that this can be rewritten in terms of trigonometric inequalities by writing $z$ and $1-z$ in polar form. We have  $z=\frac{e^{\pi \iu/2n}}{2\cos \pi/2n} $ and $1-z =\frac{e^{-\pi \iu/2n}}{2\cos \pi/2n}$ hence we have
\begin{equation}\label{eq:sgindeppolar}
p_G = \frac{1}{2^n \cos^n \pi/2n} \left( e^{\pi i (a/n-1/2) } + e^{-\pi i (a/n-1/2)} \right) = \frac{\cos \pi(a/n-1/2) }{2^{n-1} \cos^n \pi/2n},
\end{equation}
from which we immediately see that $p_G$ are non-negative real numbers (since $1/2\leq a/n \leq 1$) such that $p_{K_n}=0$ (as $a=n$ implies $a/n-1/2=1/2$ and $\cos \pi/2=0$).

\begin{proposition}\label{prop:pGs}
The values $p_G$ define a probability distribution on $\mathcal{K}_n$ that has edge density $z^2+(1-z)^2 = \frac12 (1 - \tan^2\frac\pi{2n})$, is subgraph independent and disconnected with probability $1$.
\end{proposition}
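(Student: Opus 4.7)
The plan is to exhibit the weights $p_G$ as arising from a ``complex-valued coloring model.'' Consider, for an arbitrary parameter $z \in \mathbb{C}$, the formal procedure that independently assigns to each vertex a ``color'' in $\{\text{red}, \text{blue}\}$ with weights $z$ and $1-z$, and then outputs the graph consisting of a clique on the red vertices together with a clique on the blue vertices. A graph $G \in \mathcal{K}_n$ whose two clique components have sizes $a \ge n-a$ arises from exactly two such colorings (the $a$-clique can be the red one or the blue one), so the total weight of $G$ under this procedure equals $z^a(1-z)^{n-a} + (1-z)^a z^{n-a} = p_G$. For real $z \in [0,1]$, this is genuinely a coloring model in the sense of \Cref{def:coloring}.

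Granted this picture, the properties that do not concern independence are mechanical. Non-negativity of $p_G$ and the vanishing $p_{K_n}=0$ are already established by~\eqref{eq:sgindeppolar}, since $a/n - 1/2 \in [0,1/2]$ whenever $\lceil n/2\rceil \le a \le n$, with equality to $1/2$ precisely when $a=n$. The total mass is
$$\sum_{G \in \mathcal{K}_n} p_G \;=\; \sum_{S \subseteq [n]} z^{|S|}(1-z)^{n-|S|} \;=\; (z+(1-z))^n \;=\; 1,$$
where the first equality matches each graph with its two ordered red/blue partitions. The edge $\{u,v\}$ is present iff its endpoints receive the same color, so a similar partitioning gives
$$\Pr[X_{\{u,v\}}] \;=\; z^2 + (1-z)^2 \;=\; 1 - 2z(1-z) \;=\; 1 - \tfrac{1}{2}\bigl(1 + \tan^2\tfrac{\pi}{2n}\bigr) \;=\; \tfrac{1}{2}\bigl(1 - \tan^2\tfrac{\pi}{2n}\bigr),$$
using $z(1-z) = \tfrac14(1+\tan^2\tfrac{\pi}{2n})$ for the specific value of $z$.

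The only subtle point is subgraph independence, since ``complex probabilities'' do not a priori obey standard arguments. The trick is a polynomial identity argument. For any disjoint vertex sets $V, W$ and any graphs $H_V, H_W$ on them, both sides of the identity
$$\Pr[G[V] = H_V \text{ and } G[W] = H_W] \;=\; \Pr[G[V] = H_V] \cdot \Pr[G[W] = H_W]$$
are, through the coloring-model formula, polynomials in the formal variable $z$ of bounded degree. For real $z \in [0,1]$ they agree because the underlying process is then a bona fide coloring model, which is subgraph independent by \Cref{lem:col_sub_inclusion}. Two polynomials that agree on an interval must coincide as polynomials, so the identity persists at our complex value $z = \tfrac{1}{2} + \tfrac{\iu}{2}\tan\tfrac{\pi}{2n}$.

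Finally, the only connected graph in $\mathcal{K}_n$ is $K_n$ itself, and $p_{K_n} = 0$ by the second factor of~\eqref{eq:sgindeppolar}, so a sample from the distribution is disconnected almost surely. I expect the main conceptual hurdle to be setting up the polynomial-identity argument cleanly; once one is comfortable with viewing $p_G$ as a polynomial in $z$ evaluated at a complex point, every verification reduces to the binomial theorem and elementary trigonometry.
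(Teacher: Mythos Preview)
Your proof is correct and follows essentially the same approach as the paper: interpret $p_G$ as the weight assigned by a red/blue ``coloring model'' with formal weights $z$ and $1-z$, verify the identities for real $z\in[0,1]$ where this is a genuine coloring model (hence subgraph independent by \Cref{lem:col_sub_inclusion}), and then extend to the complex value of $z$ via a polynomial-identity argument. The only cosmetic difference is that you handle the total mass and edge density by a direct binomial expansion, whereas the paper folds these into the same polynomial-identity trick used for subgraph independence; both routes are equally valid.
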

\begin{proof}
We have already seen that $p_G\geq 0$ for all $G\in \mathcal{K}_n$, and that $p_G$ is $0$ for the only connected graph, i.e., $K_n$. Hence, in order to show that the values define a probability distribution it remains to show that 
$$ \left(\sum_{G\in\mathcal{K}_n} p_G\right)-1  =0.$$

In order to show that $G$ has edge density $z^2+(1-z)^2$, we must show that for all $e\in{\binom{[n]}{2}}$
$$ \left(\sum_{G\ni e} p_G\right)-z^2 - (1-z)^2=0.$$

Moreover, assuming the condition to be a probability distribution is satisfied, subgraph independence is equivalent to saying that for all disjoint vertex sets $U, V \subseteq [n]$ and all graphs $H_1, H_2$ on $U$ and $V$ respectively we have
$$ \left(\sum_{G[U]=H_1\text{ and }G[V]=H_2} p_G\right) - \left(\sum_{G[U]=H_1} p_G\right) \left(\sum_{G[V]=H_2} p_G\right)=0.$$

Since the values $p_G$ are polynomials in $z$, each of these conditions is a polynomial equation in $z$ which must evaluate to $0$. Since we are interested in the case $z=\frac12 + \frac{\iu}2\tan\frac\pi{2n}$, we need to show that they evaluate to $0$ for this value.
In fact, we show that they evaluate to $0$ for all $z$, i.e., that they are all equal to the zero polynomial. In order to prove this, we show that the conditions are satisfied for $z\in [0, 1]$.

Pick any $z\in [0, 1]$. Consider the random graph distribution on $[n]$ attained by coloring each vertex red with probability $z$ and blue with probability $1-z$, where any two vertices are connected by an edge if they have the same color. Now observe that \eqref{eq:defsgindep} defines exactly the probability to attain any graph $G\in\mathcal{K}_n$, so for this value of $z$, the values $p_G$ define a probability distribution with edge density $z^2+(1-z)^2$. Furthermore, since this distribution is a coloring model, it is also subgraph independent. Hence all aforementioned polynomials are zero for this choice of $z$.

As the only polynomial in $z$ with infinitely many zeros is the zero polynomial, these polynomials must evaluate to zero for any complex number $z$, and the statement follows.
\end{proof}

\begin{proof}[Proof of \Cref{lem:subgraph_lower}]
    By \Cref{prop:pGs}, the probabilities defined in \eqref{eq:defsgindep} define a subgraph independent probability distribution on graphs with $n$ vertices with marginal probability of every edge exactly $\frac12(1-\tan^2\frac\pi{2n})$ and zero probability of being connected. Thus, $\rho_{\text{sub}}(n)\geq \frac{1}{2}(1-\tan^2\frac{\pi}{2n})$.
\end{proof}

The matching upper bound on $\rho_{sub}$ can also be found in \cite{day2020oneindependent}, however, we refrain from restating the proof here.

\begin{lemma}
[{\cite[Thm. 16]{day2020oneindependent}}]\label{lem:subgraph_upper}
    For any $n\geq 2$, we have $\rho_{\text{sub}}(n)\leq \frac{1}{2}(1-\tan^2\frac{\pi}{2n})$.
\end{lemma}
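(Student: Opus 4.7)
The plan is to prove that any subgraph-independent distribution $D$ on $[n]$ with all edge marginals strictly greater than $p^* := \tfrac{1}{2}(1-\tan^2\tfrac{\pi}{2n})$ assigns positive probability to some connected graph. The approach uses the complex-coloring construction from \Cref{prop:pGs} as a dual certificate via a polynomial-identity argument.

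First, I would try to reduce to distributions supported only on $\mathcal{K}_n$ (disjoint unions of at most two cliques). Given a subgraph-independent $D$ concentrated on disconnected graphs, replacing each component of each graph by its clique-closure can only increase every edge marginal. The subtle issue is whether this operation preserves subgraph independence; if not, I would instead partition the analysis by the component structure of $G$ and apply subgraph independence inside each component, leveraging that \Cref{prop:pGs}'s construction is already supported on $\mathcal{K}_n$. Note that naive symmetrization via averaging over $S_n$ does not work here, since the paper explicitly notes that convex combinations of subgraph-independent distributions need not be subgraph independent.

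Second, I would parametrize the space of subgraph-independent distributions on $\mathcal{K}_n$. The proof of \Cref{prop:pGs} shows that the weights $p_G(z) = z^{a(G)}(1-z)^{n-a(G)} + (1-z)^{a(G)} z^{n-a(G)}$ satisfy the polynomial identities corresponding to subgraph independence, normalization, and the edge marginal identity $p(z) = z^2 + (1-z)^2$, identically in the complex parameter $z$. The conjecture guiding the proof is that any subgraph-independent distribution on $\mathcal{K}_n$ decomposes, after the $S_n$-transitive reduction, as an integral of such $z$-distributions against some signed measure, and the constraint ``always disconnected'' translates to the measure being supported on the zero set of $p_{K_n}(z) = z^n + (1-z)^n$. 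This zero set hits the critical value $z^* = \tfrac{1}{2}+\tfrac{i}{2}\tan\tfrac{\pi}{2n}$, at which $p(z^*) = p^*$, matching the lower bound of \Cref{lem:subgraph_lower}. A marginal strictly exceeding $p^*$ would then force the supporting measure off this zero set, producing mass on connected graphs.

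The main obstacle is making the integral-decomposition step rigorous: one must show that the polynomial family $\{p_G(z)\}_{z \in \mathbb{C}}$ spans (or almost spans) the relevant space of subgraph-independent distributions on $\mathcal{K}_n$ and characterize which complex measures give rise to real nonnegative distributions. An alternative that sidesteps this complex-analytic subtlety is induction on $n$ via a Chebyshev-type recurrence for $p^*(n)$, which comes from the identity $1 - \tan^2\tfrac{\pi}{2n} = \cos\tfrac{\pi}{n}/\cos^2\tfrac{\pi}{2n}$. In this alternative, one would condition on the connected component of a fixed vertex and relate the probabilities of each component size to $p^*(n-1)$, extracting the Chebyshev recursion directly from the factorization that subgraph independence provides across $(C, \bar{C})$. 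Presumably the proof in \cite{day2020oneindependent} takes this more combinatorial inductive route via direct probabilistic arguments on component sizes, with the exact value $\tan^2(\pi/(2n))$ emerging as the solution of the resulting second-order linear recurrence.
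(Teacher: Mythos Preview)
The paper does not actually prove this lemma: it explicitly states ``we refrain from restating the proof here'' and simply cites \cite{day2020oneindependent}. So there is no proof in the paper to compare against, and your proposal must be assessed on its own merits.

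As a proof, your proposal has genuine gaps that you yourself flag but do not close. First, the reduction to distributions supported on $\mathcal{K}_n$ is not sound as written: a disconnected graph may have more than two components, so ``clique-closure of each component'' does not land in $\mathcal{K}_n$ (which by definition has at most two components); and even when it does, you have not shown that this map preserves subgraph independence. Second, the heart of your first approach --- that every subgraph-independent distribution on $\mathcal{K}_n$ arises as an integral of the one-parameter family $p_G(z)$ against some (signed) measure --- is a strong structural claim that you leave entirely unproven; \Cref{prop:pGs} only shows that each fixed $z$ gives \emph{one} such distribution, not that the family is spanning in any useful sense. Third, your fallback ``Chebyshev-recurrence induction on $n$'' is described at the level of a hope (``one would condition on the connected component of a fixed vertex and relate the probabilities\ldots''), with no indication of which quantity satisfies the recurrence or why the inductive step goes through.

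In short, you have correctly identified plausible mechanisms by which the value $\tfrac{1}{2}(1-\tan^2\tfrac{\pi}{2n})$ could emerge, and your final guess about the nature of the argument in \cite{day2020oneindependent} (an induction on $n$ via conditioning on the component of a vertex, with the threshold appearing as the root of a recurrence) is reasonable. But none of the three routes you sketch is carried to the point of being a proof.
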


\subsection{Coloring Models}
We first consider coloring models with only $2$ colors, since we can find matching lower and upper bounds for that case. Other restrictions of the coloring model could also be interesting to investigate, such as other bounded numbers of colors, or the case where every vertex must pick \emph{uniformly} among its colors.
\subsubsection{Two Colors}
We show that the threshold probability is exactly $1/4$ in this case.

\begin{lemma}
    \label{lem:coloring_two_colors}
    For all $n\geq 3$, we have $\rho_{col,2}(n)=1/4$.
\end{lemma}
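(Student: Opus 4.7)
My plan is to prove the two matching bounds $\rho_{\text{col},2}(n) \geq 1/4$ and $\rho_{\text{col},2}(n) \leq 1/4$ in turn.

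For the lower bound, I would construct, for each $n \geq 3$, an explicit 2-coloring model in $\calG_{\text{col},2}(n,1/4)$ whose random graph is almost surely disconnected; by the infimum definition of $\rho$, this gives $\rho_{\text{col},2}(n) \geq 1/4$. Each vertex $v$ picks $c_v \in \{0,1\}$ uniformly at random, and for each edge $e$ we set $S_e = \{(a_e,b_e)\}$ to be a single color pair, giving marginal exactly $1/4$. The pairs $(a_e,b_e)$ are chosen so that for every deterministic coloring $\vec c \in \{0,1\}^n$, the set of triggered edges $\{e : (c_{u(e)}, c_{v(e)}) = (a_e,b_e)\}$ forms a disconnected subgraph of $K_n$. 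For $n = 3$ the choice $S_{\{1,2\}} = \{(0,0)\}$, $S_{\{1,3\}} = \{(1,1)\}$, $S_{\{2,3\}} = \{(1,0)\}$ works: an inspection of all eight colorings shows that each triggers at most one edge, hence always gives a disconnected graph on three vertices. For general $n \geq 3$, the same combinatorial idea extends by equipping the remaining edges one at a time with pairs chosen to avoid creating a spanning connected substructure in any single coloring.

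For the upper bound, I plan to show that any 2-coloring model with every edge marginal strictly greater than $1/4$ admits a coloring in its support whose induced graph is connected. The strategy is iterative: fix an initial edge $e^* = \{u^*, v^*\}$ and a positive-probability pair $(\alpha^*, \beta^*) \in S_{e^*}$, set $c_{u^*} = \alpha^*$ and $c_{v^*} = \beta^*$, and then grow a connected component $C$ by adding vertices $w \notin C$ one at a time, each time choosing $c_w$ so that some edge from $w$ to a vertex $u \in C$ is triggered. The key lemma is that this greedy step never fails: if no choice of $c_w$ triggered any edge into $C$, then for every $u \in C$ the set $S_{\{u,w\}}$ would be contained in $\{(1 - c_u, \cdot)\}$, which bounds $\mu_{\{u,w\}} \leq p_u^{1 - c_u}(1 - p_u)^{c_u}$. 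Combined with the elementary inequality $p(1-p) \leq 1/4$ and the assumption $\mu > 1/4$ on all such edges, a telescoping product argument (mirroring the $n = 3$ calculation $\mu_A \mu_B \mu_C = p_a(1-p_a)\,p_b(1-p_b)\,p_c(1-p_c) \leq (1/4)^3$) yields a contradiction.

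The main obstacle lies in the upper bound: a naive linearity-of-expectation argument does not suffice, since $\sum_e \mu_e > \binom{n}{2}/4$ does not exceed the maximum $\binom{n-1}{2}$ attainable by a disconnected graph when $n \geq 3$. The argument must therefore exploit the specific product structure of 2-coloring models, and in particular handle the case when the per-vertex distributions $p_v$ are strongly non-uniform so that the mass of some $S_e$ is concentrated in a single cell; the $p(1-p) \leq 1/4$ bound is precisely what rules out this extremal configuration.
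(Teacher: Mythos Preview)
Your upper-bound argument has a genuine gap: the greedy growth step can fail even when all marginals exceed $1/4$. Take $n=3$ with $\Pr[c_a{=}0]=\Pr[c_b{=}0]=0.7$, $\Pr[c_c{=}0]=1/2$, and rules $S_{\{a,b\}}=\{(0,0)\}$, $S_{\{a,c\}}=\{(1,0),(1,1)\}$, $S_{\{b,c\}}=\{(1,0),(1,1)\}$; the three marginals are $0.49,\,0.3,\,0.3>1/4$. Starting from $e^*=\{a,b\}$ with its only triggering pair $(0,0)$, neither choice of $c_c$ connects $c$ to $C=\{a,b\}$, since both edges out of $c$ require the neighbour to have colour $1$. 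Yet the coloring $c_a=c_b=1$ gives the connected path $a\text{--}c\text{--}b$. So the claim ``this greedy step never fails'' is false, and the identity $\mu_A\mu_B\mu_C = p_a(1-p_a)\,p_b(1-p_b)\,p_c(1-p_c)$ on which you base the ``telescoping'' contradiction does not hold (here the two sides are $0.0441$ versus $\approx 0.011$).

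The paper's upper-bound proof supplies the missing structural step. It first peels off any vertex $v$ that \emph{covers} some other vertex $w$ (for both colours of $w$ there is a colour of $v$ triggering $\{v,w\}$); such $v$ can be removed and reattached at the end. After this reduction, no vertex covers any other, which forces every remaining edge to have \emph{exactly one} triggering pair. Now pick the vertex $v'$ maximizing $p:=\max(\Pr[v'\text{ red}],\Pr[v'\text{ blue}])$ and give it the majority colour; for any other remaining $w$, if the unique pair for $\{v',w\}$ used the minority colour of $v'$, its marginal would be at most $(1-p)\cdot p\le 1/4$, contradicting the hypothesis. Hence every remaining vertex attaches directly to $v'$. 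The covering reduction is precisely what your greedy scheme lacks.

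For the lower bound, your $n=3$ construction is fine, but ``equipping the remaining edges one at a time with pairs chosen to avoid creating a spanning connected substructure'' is not a proof for general $n$; you have not argued that such choices exist. The paper handles all $n\ge 3$ at once: partition the vertex set into three nonempty parts $A,B,C$, connect two vertices in the same part iff both are red, and connect $a\in A$ to $b\in B$ (and cyclically $B$--$C$, $C$--$A$) iff the first is red and the second is blue. Every marginal is $1/4$, and one checks that any connected component is contained in the union of two parts, so the graph is never connected.
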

\begin{proof}
    We first prove $\rho_{col,2}(n)\geq 1/4$ by defining the following coloring model which is always disconnected. Decompose the vertex set into three non-empty sets $A,B,C$. Every vertex chooses a color uniformly among the colors red and blue. Two vertices in the same set are connected if they are both red. A vertex $a\in A$ is connected to $b\in B$, if $a$ is red and $b$ is blue. The same goes for $b\in B$ and $c\in C$, as well as $c\in C$ and $a\in A$. Clearly, the marginal edge probability of every edge is $1/4$. We now show that every connected component of a graph sampled from this distribution is a subset of the union of at most two of the sets $A,B,C$. To see this, we pick some edge $\{a,b\}$ for $a\in A$ and $b\in B$ to be present, and try to grow its connected component. Since the edge $\{a,b\}$ is present, $a$ must be red and $b$ blue. Now, $a$ can only be connected to other red vertices in $A$, and to blue vertices in $B$. Similarly, $b$ can only be connected to other red vertices in $A$. No red vertex in $A$ or blue vertex in $B$ can be connected to a vertex in $C$, thus the connected component containing the edge $\{a,b\}$ is contained in $A\cup B$. Symmetrically this holds for any pair of sets, and we conclude that the graph must be disconnected, proving the lower bound.
    
Next, we prove $\rho_{col,2}(n)\leq 1/4$ by showing that if every marginal edge probability is strictly larger than $1/4$, we can always find a coloring that connects the graph. We say that a vertex $v$ \emph{covers} vertex $w$, if for both colors at vertex $w$, there exists a color at $v$, such that the edge $\{v,w\}$ is present under this coloring. We now remove vertices one by one, by repeatedly removing a vertex which covers some remaining vertex, until no more such vertices exist. If the graph on the remaining vertices $V'$ can be connected using some coloring, we can add back the removed vertices in reverse order, and connect them to the vertex they cover, thus connecting the whole graph.
We thus only have to show that the graph on $V'$ can be connected. Since in this graph no vertex covers any other, each edge is only present under exactly one of the four possible color combinations of its endpoints.

We pick a vertex $v'\in V'$ which maximizes $\max(Pr[v\text{ is red}],Pr[v\text{ is blue}])$ among all $v\in V'$. We give this vertex $v'$ the color which is more likely, let $p$ be the probability of this color. One can see that for any other vertex $w\in V'\setminus\{v'\}$, there must be a color such that the edge $\{v',w\}$ is present. Otherwise, the color combination making $\{v',w\}$ present would have probability of at most $p\cdot(1-p)$, which is at most $1/4$. Thus, we can connect $V'$ by simply coloring each vertex with the correct color to connect to $v'$. This concludes the proof of the upper bound, and thus the whole lemma.
\end{proof}

\subsubsection{General Coloring Models}
We now consider the full generality of coloring models, with an unbounded or even infinite (although by \Cref{lem:finitely_many_colors} this can w.l.o.g. be excluded) number of outcomes of the local experiment at each vertex. 

\begin{lemma}
    \label{lem:coloring_lower}
    For all $n\geq 3$, we have $\rho_{\text{col}}(n)\geq (n-2)\frac{3n-4-\sqrt{5n^2-16n+12}}{2(n-1)^2}$.
\end{lemma}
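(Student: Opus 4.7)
The plan is to prove $\rho_{\text{col}}(n) \ge p^*$, where $p^* := (n-2)\frac{3n-4-\sqrt{5n^2-16n+12}}{2(n-1)^2}$, by exhibiting a single explicit coloring model on $n$ vertices that (i) is deterministically disconnected and (ii) has every marginal edge probability at least $p^*$. By the definition of $\rho_{\text{col}}(n)$ as an infimum over models whose sampled graph is connected with positive probability, such a witness immediately gives the lower bound.

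A useful first observation is that $p^*$ is precisely the smaller positive root of the quadratic
$$
(n-1)^2\, p^2 \;-\; (3n-4)(n-2)\,p \;+\; (n-2)^2 \;=\; 0,
$$
which strongly suggests that the construction should produce two distinct families of marginal edge probabilities parameterized by a single variable $\alpha$, and that balancing these two families against one another should reduce algebraically to exactly this quadratic.

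The model I would design is an asymmetric refinement of the cyclic three-part two-color construction used to show $\rho_{\text{col},2}(n)\geq 1/4$ in \Cref{lem:coloring_two_colors}. Since the fully symmetric version of that construction tops out at $1/4$, some symmetry must be broken in order to reach $p^* > 1/4$ for $n \ge 4$: for instance by singling out one distinguished vertex with its own local color distribution, by letting one of the three groups be large while the others are small, or by enlarging the local color alphabet beyond two colors. Each vertex then independently draws a color with a probability parameter $\alpha$ to be optimized, and the per-edge rules are chosen to mirror the cyclic pattern of \Cref{lem:coloring_two_colors}. Deterministic disconnectedness is established in the same spirit as for $n=3$: for every realization of the colors, one traces which color classes and which groups can be in the same connected component, and one observes that the vertex set always admits a non-trivial cut whose crossing edges are all forbidden by the chosen rules.

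Once disconnectedness is secured, the marginal edge probabilities collapse into (at most) two distinct expressions in $\alpha$: one for edges inside the main cloud of vertices and one for edges touching the distinguished vertex or between different groups. Setting these two expressions equal to a common value $p$ and solving for $\alpha$ in terms of $n$ yields the displayed quadratic, whose smaller root is $p^*$; the optimum is attained at the $\alpha$ that equalizes the two marginal types.

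The main obstacle is pinpointing the correct asymmetric construction: one must simultaneously (a) break the symmetry of the three-part cyclic model strongly enough that the minimum marginal exceeds $1/4$, and (b) retain enough combinatorial structure that deterministic disconnectedness still holds for every coloring. Once the right construction is identified, the case-by-case verification of disconnectedness is a direct generalization of the $n=3$ analysis in \Cref{lem:coloring_two_colors}, and the algebraic manipulation that recovers the explicit expression $p^* = (n-2)\frac{3n-4-\sqrt{5n^2-16n+12}}{2(n-1)^2}$ from the balancing quadratic is a short computation using the quadratic formula.
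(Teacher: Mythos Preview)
Your high-level plan is right and your quadratic is the correct one, but the proposal is a description of what a proof would look like rather than a proof: you never actually specify the coloring model. You list several possible ways to break the symmetry of the three-part construction and then say ``the main obstacle is pinpointing the correct asymmetric construction''; that obstacle is the entire content of the lemma. In particular, the most natural attempt along the lines you sketch --- a single distinguished vertex $v$ that connects to every blue vertex while the remaining $n-1$ vertices connect pairwise when both are red --- fails: if every non-$v$ vertex happens to be blue, the resulting graph is a star centred at $v$, hence connected. So the disconnectedness clause cannot be verified ``in the same spirit as for $n=3$'' without an extra idea.

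The missing idea, which is what the paper uses, is to give the distinguished vertex $v$ a richer local experiment: $v$ chooses uniformly at random one vertex $v'\in[n]\setminus\{v\}$ to \emph{exclude}, and then connects to all blue vertices except $v'$. Every other vertex is red with probability $q$ and blue otherwise, and two non-$v$ vertices are joined iff both are red. Now every realization is disconnected: any red vertex lies in a component contained in the red set (which misses $v$), and if all non-$v$ vertices are blue then $v'$ is isolated. The two marginal types are $q^2$ for non-$v$ edges and $(1-q)\cdot\tfrac{n-2}{n-1}$ for edges at $v$; equating them gives exactly your quadratic $(n-1)^2p^2-(3n-4)(n-2)p+(n-2)^2=0$ with smaller root $p^*$. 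Once you insert this explicit model, the remainder of your write-up goes through.
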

Note that for $n=3$, this bound is equal to $1/4$ (which matches the previous subsection, since our construction only uses $2$ colors in this case), and for $n\rightarrow\infty$, it tends to $2-\phi\approx 0.381966$. We remark that Lemma~\ref{lem:coloring_lower} was obtained simultaneously and independently in~\cite[Thm 1.7]{badakhshian2023density} with the exact same construction.

\begin{proof}
We show that for each number of vertices $n\geq 3$, there exists a coloring model with marginal edge probability $p(n)=(n-2)\frac{3n-4-\sqrt{5n^2-16n+12}}{2(n-1)^2}$ which is connected with probability~$0$. 

We define the distribution $CS(n)$ for any $n\geq 3$. A fixed vertex $v$ picks as its local experiment one other vertex $v'\in [n]\setminus\{v\}$ uniformly at random. All other vertices pick the color red with probability $q$ (to be determined later), and blue with probability $1-q$. Then, $v$ connects to all vertices in $V\setminus\{v,v'\}$ that are colored blue. Between the vertices in $V\setminus\{v\}$, an edge is present if both endpoints are colored red. The resulting graph is clearly not connected: every red vertex is only connected to other red vertices, and in the case that all vertices in $V\setminus\{v\}$ are blue, only $n-2$ edges exist.

For every edge $e$ not incident to $v$ we have $Pr[X_e]=q^2$, and for every edge $e'$ incident to $v$ we have $Pr[X_{e'}]=(1-q)\frac{n-2}{n-1}$. We pick $q$ maximizing the minimum of these probabilities. Since $q^2$ is increasing in $q$ for $q\geq 0$, and $(1-q)\frac{n-2}{n-1}$ is decreasing in $q$, the minimum is maximized when $q^2=(1-q)\frac{n-2}{n-1}$. We can thus solve
\[q^2+\frac{n-2}{n-1}q-\frac{n-2}{n-1}=0\]
to get
\begin{align*}
    q&=-\frac{n-2}{2(n-1)}\pm\sqrt{\frac{(n-2)^2}{4(n-1)^2}+\frac{n-2}{n-1}} \\
    q&=\frac{2-n\pm \sqrt{(n-2)^2+4(n-1)(n-2)}}{2(n-1)} \\
    q&=\frac{2-n\pm\sqrt{5n^2-16n+12}}{2(n-1)}.
\end{align*}

Only one of these solutions fulfills $q>0$, namely the one with ``$+$''. Since the marginal edge probability $p(n)$ is equal to $(1-q)\frac{n-2}{n-1}$, we get the claimed bound.
\end{proof}

Let us now consider upper bounds for $\rho_{\text{col}}(n)$. We first state the weaker constant bound holding for all $n$. Note that the proof of Theorem 1.7 in~\cite{badakhshian2023density}, which came out independently and simultaneously, uses essentially the same construction as the one we employ in the lemma below, except that they optimize the calculations to get a bound of $\frac{1}{2} - \frac{1}{4n-6}$.
\begin{lemma}
    \label{lem:coloring_upper_one_half}
    For every $n$, we have $\rho_{\text{col}}(n)\leq \frac{1}{2}$.
\end{lemma}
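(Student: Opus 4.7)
My goal is to show that whenever every marginal edge probability in a coloring model strictly exceeds $\tfrac{1}{2}$, there is a positive-probability outcome $(\omega_v^*)_{v \in V}$ (with $\prod_v \Pr_v[\omega_v^*] > 0$) whose induced graph is connected. My approach is to construct a spanning star centered at a carefully chosen vertex.

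The core step is the following observation. Suppose we can identify a vertex $v^* \in V$ and an outcome $\omega_{v^*}^* \in \Omega_{v^*}$ with $\Pr_{v^*}[\omega_{v^*}^*] \geq \tfrac{1}{2}$. Fix $\omega_{v^*} = \omega_{v^*}^*$ at $v^*$. Then for every other vertex $w \in V \setminus \{v^*\}$, I claim there is some $\omega_w^* \in \Omega_w$ with $\Pr_w[\omega_w^*] > 0$ and $f_{v^*, w}(\omega_{v^*}^*, \omega_w^*) = 1$. Indeed, were this not the case, the edge $\{v^*, w\}$ could only be present when $\omega_{v^*} \neq \omega_{v^*}^*$, giving
\[
\Pr[X_{\{v^*, w\}}] \;\leq\; 1 - \Pr_{v^*}[\omega_{v^*}^*] \;\leq\; \tfrac{1}{2},
\]
contradicting the assumption that $\Pr[X_{\{v^*, w\}}] > \tfrac{1}{2}$. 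Picking such an $\omega_w^*$ for each $w$ simultaneously is allowed by the coloring model structure, since the local experiments at different vertices are independent. The resulting outcome has positive probability and its graph contains a spanning star centered at $v^*$, hence is connected.

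The main obstacle is the case when \emph{no} vertex admits an outcome of probability at least $\tfrac{1}{2}$, so that a single star center cannot be pinned down in this direct way. One way to bypass this difficulty is to invoke the hierarchy established in \Cref{thm:hierarchy}: by \Cref{lem:col_sub_inclusion} we have $\calG_{\text{col}} \subseteq \calG_{\text{sub}}$, and by \Cref{lem:subgraph_upper}, $\rho_{\text{sub}}(n) = \tfrac{1}{2}(1 - \tan^2(\pi/(2n))) < \tfrac{1}{2}$ for every $n \geq 2$, which yields $\rho_{\text{col}}(n) \leq \rho_{\text{sub}}(n) < \tfrac{1}{2}$ immediately. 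Alternatively, one can give a direct construction in the spirit of the 2-color proof of \Cref{lem:coloring_two_colors}: repeatedly remove vertices that ``cover'' another vertex (those for which, regardless of the remaining vertex's outcome, a compatible outcome with positive probability exists), and then analyze the reduced coloring model where no vertex covers another. In that reduced model the edge structure is rigid enough that the above star argument can be adapted by picking the vertex and outcome maximizing $\Pr_v[\omega_v]$ and observing that any edge missing under this assignment would violate the marginal bound. This second route is more constructive and is the one that, with careful bookkeeping, yields the refined bound $\tfrac{1}{2} - \tfrac{1}{4n-6}$ obtained in~\cite{badakhshian2023density}; for our purposes the weaker uniform bound $\tfrac{1}{2}$ suffices.
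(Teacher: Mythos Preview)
Route (a) --- invoking $\calG_{\text{col}} \subseteq \calG_{\text{sub}}$ together with \Cref{lem:subgraph_upper} --- is logically correct and immediately gives $\rho_{\text{col}}(n) \leq \rho_{\text{sub}}(n) < \tfrac12$. But then your entire Case~1 star construction is redundant: the proof collapses to a one-line citation of the (non-trivial) external result of Day--Falgas-Ravry--Hancock, and says nothing intrinsic about coloring models.

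Route (b), on the other hand, has a genuine gap. The covering reduction from \Cref{lem:coloring_two_colors} works in the 2-color case precisely because with only two colors the more likely one always has mass $\geq \tfrac12$, so your Case~1 star argument \emph{always} applies after the reduction. With more colors this fails: even after removing all covering vertices, the maximum single-color mass at every vertex can remain strictly below $\tfrac12$, and then fixing the most likely color $c'$ at $v'$ forces no contradiction --- an edge $\{v',w\}$ that is incompatible with $c'$ at $v'$ can still have marginal probability up to $1 - \Pr_{v'}[c'] > \tfrac12$. So the claim ``any edge missing under this assignment would violate the marginal bound'' is simply not true in general, and the adaptation you describe does not go through.

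The paper's proof is a self-contained argument of a different shape. It fixes an arbitrary vertex $v_0$ and, by averaging over its colors, finds a color $i$ for which the conditional edge probabilities $A_{ij} = \Pr[X_{v_0 v_j}\mid c(v_0)=i]$ sum to at least $(n-1)/2$. A counting argument then shows that the number of $j$ with $A_{ij} > \tfrac12$ is at least the number with $A_{ij} = 0$; each zero-entry vertex is paired with a distinct $>\tfrac12$-entry vertex and reached via that intermediate (both required edges can be realized simultaneously since $A_{ij'} + p > 1$), while all remaining vertices connect directly to $v_0$. This builds a depth-2 spanning tree rather than a star and needs no case split on color masses. It is this averaging-plus-pairing argument, not a covering reduction, that underlies the refined bound $\tfrac12 - \tfrac{1}{4n-6}$ of~\cite{badakhshian2023density}.
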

\begin{proof}
We show that we can find a connected outcome in every coloring model with minimum marginal edge probability $p\geq 0.5$.
    Let the vertices of the graph be $v_0, \dots, v_{n-1}$. We consider all possible  (say $k$ many\footnote{For readability of this proof we assume that the number of outcomes in the random experiment at each vertex is finite, which we can do by \Cref{lem:finitely_many_colors}. The exact same proof strategy also works for (even uncountably) infinite probability spaces.}) colors for $v_0$, and the conditional probabilities of the edges between $v_0$ and each other vertex $v_i$, conditioned on $v_0$'s color. It is convenient to represent these in a table $A$ with $k$ rows and $n-1$ columns, where $A_{ij} = Pr[X_{v_0v_j}| c(v_0) = i]$, where $c(v_0)=i$ means that $v_0$ is colored in color $i$. Note that by the law of total probability, for each column $j$, we have
    $$ \sum_{i=1}^k A_{ij} Pr[c(v_0)=i] \geq p \geq 0.5.$$
    Summing over all rows, this implies
    $$ \sum_{j=1}^{n-1} \sum_{i=1}^k A_{ij} Pr[c(v_0)=i] \geq \frac{n-1}{2} $$
    $$ \sum_{i=1}^k Pr[c(v_0)=i] \sum_{j=1}^{n-1} A_{ij} \geq \frac{n-1}{2},$$
    and since $\sum_{i=1}^k Pr[c(v_0)=i] = 1$, the latter formulation implies that there is at least one row $i$ for which
    $$ \sum_{j=1}^{n-1} A_{ij} \geq \frac{n-1}{2}.$$
    Fix one $i$ for which this holds and let $x$ be the number of entries $j$ on that row with $A_{ij} = 0$, let $y$ be the number of entries $j$ with $0 < A_{ij} \leq \frac{1}{2}$ and let $z$ be the number of entries $j$ with $ A_{ij} > \frac{1}{2}$. Then it must be the case that $z \geq x$, as otherwise we would have
    $$ \sum_{j=1}^{n-1} A_{ij} \leq x \cdot 0 + y \cdot \frac{1}{2} + z \cdot 1 < (x+z)\cdot \frac{1}{2} + y \cdot \frac{1}{2} = \frac{n-1}{2},$$
    contradicting our choice of the row $i$.
    
    To show that the graph is connected with non-zero probability, we exhibit a choice of colors for which every vertex has a path to $v_0$ of length at most $2$. We choose color $i$ for $v_0$. Since we showed above that $z \geq x$, we can pair up each vertex $v_j$ for which $A_{ij} = 0$ with a distinct vertex $v_{j'}$ for which $A_{ij'} > \frac{1}{2}$. We cannot connect $v_j$ to $v_0$ directly given that $v_0$ has color $i$, so we will connect $v_j$ to $v_{j'}$ and $v_{j'}$ to $v_0$. Since $$Pr[X_{v_0v_{j'}}|c(v_0)=i] + Pr[X_{v_{j'}v_j}] > \frac{1}{2}+p \geq 1,$$ there is some choice of colors for $v_{j'}$ and $v_{j}$ such that both edges $v_0v_{j'}$ and $v_{j'}v_j$ are present. We fix these colors for $v_j$ and $v_{j'}$. Repeating this for each vertex with a $0$ entry in our table, we are left only with some vertices $v_h$ with $A_{ih} > 0$ (namely the ones that were not paired up with any vertex with a $0$ entry, and the ones for which the entry in the table is positive but at most $\frac{1}{2}$). For each such $v_h$, there is a choice of color such that the edge $v_0v_h$ is present, so we pick that color. This gives us a spanning tree rooted at $v_0$, where all vertices $v_h$ with $A_{ih}>0$ are neighbors of $v_0$, and all the others are neighbors of neighbors of $v_0$ (see \Cref{fig:upper_bound_half} for an example), completing the proof. \qedhere

    \begin{figure}[tb]
        \centering
        \includegraphics[]{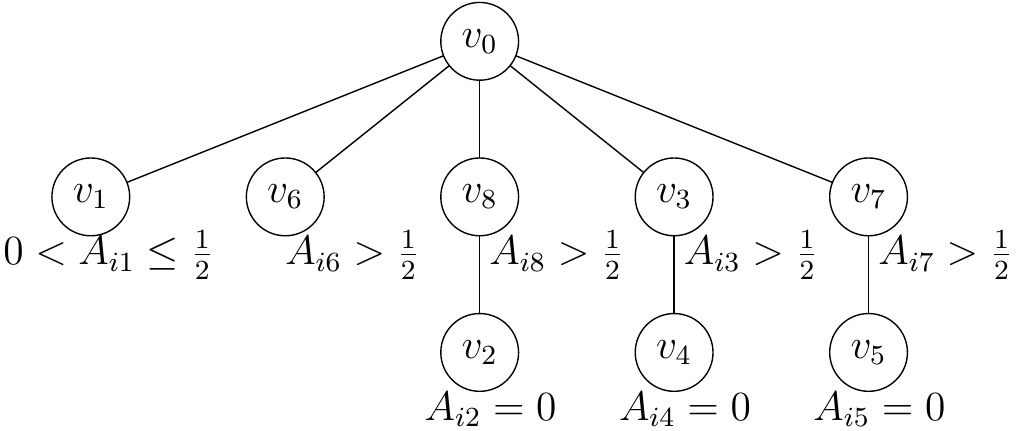}
        \caption{In this example, there are three vertices $v_j$ with $A_{ij}=0$, four vertices $v_j$ with $A_{ij} > \frac{1}{2}$, and one vertex $v_j$ with $0 < A_{ij} \leq \frac{1}{2}$.}
        \label{fig:upper_bound_half}
    \end{figure}
\end{proof}

Next, we present the stronger upper bound holding for large enough $n$. This bound tends to $2-\phi$ for $n\rightarrow\infty$ and thus matches the lower bound for large enough $n$, as conjectured in~\cite{badakhshian2023density}.

\begin{theorem}\label{thm:coloring_upper_n_infinity}
For any $\epsilon>0$, there exists an $n_0(\epsilon)$, such that for any coloring model distribution on graphs on $n\geq n_0(\epsilon)$ vertices with minimum marginal edge probability at least $p' = 2-\phi+\epsilon$, the graph is connected with non-zero probability.
\end{theorem}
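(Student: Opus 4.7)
The plan is to prove the upper bound by a probabilistic construction followed by a deterministic fix-up. We sample a coloring $\omega = (\omega_v)_{v \in V}$ with each $\omega_v$ drawn independently from $Pr_v$, show that the resulting random graph has strong structural properties with high probability, and then re-color a small exceptional set of vertices to guarantee connectivity. The slack $\epsilon$ above $2-\phi = q^2 = 1-q$ (where $q = 1/\phi$) is precisely what we will exploit; it rules out the extremal two-state structure of the matching lower bound in \Cref{lem:coloring_lower}.

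For each vertex $v$ and color $c \in \Omega_v$, define the conditional expected degree
\[ d(v,c) = \sum_{u \neq v} Pr_{\omega_u \sim Pr_u}[f_{v,u}(c, \omega_u) = 1]. \]
The marginal edge bound gives $\E_{c \sim Pr_v}[d(v,c)] \geq p'(n-1)$, so by Markov's inequality applied to $(n-1) - d(v,c)$, a random color is \emph{typical} (satisfies $d(v,c) \geq (p'-\delta)(n-1)$) with probability close to $1$, for a small constant $\delta = \delta(\epsilon) > 0$. Once $\omega_v$ is fixed, the edges incident to $v$ are independent Bernoulli random variables depending only on the independent $\omega_u$'s, so \Cref{thm:azuma} transfers the expected degree $d(v, \omega_v)$ to the actual degree of $v$ up to $O(\sqrt{n\log n})$. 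A union bound then yields that whp every typical vertex has actual degree at least $(p'-2\delta)(n-1)$, while the number of atypical vertices is at most $\alpha(\delta)\cdot n$, with $\alpha(\delta) \to 0$ as $\delta \to 0$.

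A similar concentration argument applied to counts of common neighbours of pairs of typical vertices shows that, with high probability, the sampled typical vertices form a \emph{connected core} $C$ in which any two vertices are linked by a short path. This is where the assumption $p' > 2-\phi$ is crucial: at the extremal density $p'=2-\phi$ itself, obstructive pairs of typical colors (for instance ``red'' and ``blue'' in the lower-bound construction) can admit zero common neighbours, so the argument must use the slack $\epsilon$ to overcome such cross-type obstructions. Finally, each of the at most $\alpha(\delta)n$ atypical vertices $v$ is attached to $C$ by picking a color $c^* \in \Omega_v$ that produces at least one edge from $v$ to $C$; since the probability that no such $c^*$ exists (over the random sampling of $C$'s colors) is at most $(1-p')^{|C|}$, a union bound over all atypical $v$ completes the construction.

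The hard step is the core-connectivity argument: the naïve common-neighbour count can vanish for some pairs of colors exactly at the lower-bound threshold, so one needs a more refined analysis, possibly using longer paths or a stability-style decomposition of the coloring model that leverages the golden-ratio identity $q^2 = 1-q$. Coordinating the parameters $\delta$, $\alpha(\delta)$, the slack $\epsilon$, and the threshold $n_0(\epsilon)$ so that concentration, core connectivity, and re-coloring all succeed simultaneously for large $n$ is what makes \Cref{thm:coloring_upper_n_infinity} the most technically involved result of the paper.
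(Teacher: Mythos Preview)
Your proposal has two genuine gaps, and together they mean the argument as written does not go through.

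First, the Markov step is incorrect. Applying Markov's inequality to $(n-1)-d(v,c)$ only yields
\[
\Pr_{c}\big[d(v,c) < (p'-\delta)(n-1)\big] \;\le\; \frac{1-p'}{1-p'+\delta},
\]
which for small $\delta$ is close to $1$, not close to $0$. So you cannot conclude that a random color is typical with probability close to $1$, and hence the atypical set need not be of size $o(n)$. This is not a technicality: in the extremal construction of \Cref{lem:coloring_lower}, a non-$v$ vertex colored ``blue'' has conditional expected degree $\approx 1$, and ``blue'' occurs with probability $1-q \approx 0.382$. Thus a constant fraction of vertices can be atypical in your sense, even arbitrarily close to the threshold, and the ``large connected core plus small exceptional set'' framework collapses.

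Second, you explicitly acknowledge that the core-connectivity step---the only place where the hypothesis $p' > 2-\phi$ rather than $p' > 0$ is actually used---is not worked out, and suggest it ``possibly'' needs longer paths or a stability decomposition. This is precisely the heart of the theorem; everything else is routine concentration.

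The paper's proof takes a quite different route and avoids both issues. The random coloring is used only to guarantee (via Azuma and a union bound) that for every vertex $v$, the \emph{average over recolorings of $v$} of $|N(v,c)|$ exceeds $(p+\epsilon/2)n$; no typical/atypical split of the vertices is attempted. Instead the proof selects two anchor vertices $r,b$ with maximal neighborhoods $R,B$ (with $|R\cap B|$ small), introduces the sets $OR$, $OB$ of \emph{obligate} vertices---those that cannot be recolored to reach $B$, respectively $R$---and then performs an extensive case analysis on $|OR|$ and $|OB|$. The golden-ratio identities $(1-p)^2 = p$ and $1/p - 2 = 1-p$ enter in bounding guaranteed neighborhood sizes of obligate vertices (via \Cref{lem:markovderivate}) at exactly the right moments to make all the size intersections work out with slack $\Theta(\epsilon n)$. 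Only $O(\log n)$ vertices are ever recolored in a way that matters, which is what allows the argument to close.
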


To prove this theorem, we will need the following notation and setup. We write $p$ for $2-\phi$.

\begin{observation}
\label{lemma:nice-identities}
For $p= 2 - \phi$, we have the nice identities $ \frac{1}{p} - 2 = 1-p$ and $(1-p)^2=p$.
\end{observation}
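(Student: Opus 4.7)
The plan is to verify both identities using only the defining equation of the golden ratio, $\phi^{2}=\phi+1$ (equivalently $\phi^{2}-\phi-1=0$). Setting $p=2-\phi$, we immediately obtain $1-p=\phi-1$, which is the substitution I will use throughout.

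First I would establish the second identity $(1-p)^{2}=p$. Expanding gives
$(1-p)^{2}=(\phi-1)^{2}=\phi^{2}-2\phi+1,$
and substituting $\phi^{2}=\phi+1$ reduces this to $\phi+1-2\phi+1=2-\phi=p$. This is the core computation.

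For the first identity $\tfrac{1}{p}-2=1-p$, the cleanest route is to derive it as an algebraic consequence of the second. Rearranging $(1-p)^{2}=p$ gives $p^{2}-3p+1=0$. Since $p\neq 0$, dividing through by $p$ yields $p-3+\tfrac{1}{p}=0$, i.e., $\tfrac{1}{p}=3-p$, and subtracting $2$ from both sides gives $\tfrac{1}{p}-2=1-p$, as required. Alternatively, one may verify the first identity directly by computing $(2-\phi)(\phi+1)=2\phi+2-\phi^{2}-\phi=\phi+2-(\phi+1)=1$, so $\tfrac{1}{p}=\phi+1$ and hence $\tfrac{1}{p}-2=\phi-1=1-p$.

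There is essentially no obstacle: both identities are routine manipulations once the relation $\phi^{2}=\phi+1$ is invoked, and in fact each identity is equivalent to the quadratic $p^{2}-3p+1=0$ whose root in $(0,1)$ is precisely $p=2-\phi$.
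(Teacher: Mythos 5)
Your verification is correct, and it is exactly the routine algebra the paper leaves implicit by stating this as an observation without proof: both identities follow directly from $\phi^2=\phi+1$ (equivalently from $p=2-\phi$ being the root of $p^2-3p+1=0$ in $(0,1)$). Nothing further is needed.
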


To begin, we color each vertex independently with a random color according to its own distribution.\footnote{We once again assume that we have a finite number of colors. Our proof strategy also works with infinite probability spaces.} For this fixed coloring, we write $N(v,c)$ for the set of neighbors of $v$, if $v$ is recolored to the color $c$.

\begin{lemma}\label{lem:condition}
In a random coloring, with high probability it holds that $\E_{c}[|N(v,c)|]>(p+\epsilon/2)n$ for all $v$ simultaneously.
\end{lemma}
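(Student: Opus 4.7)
The plan is to fix a vertex $v$ and view $Y_v := \E_c\bigl[|N(v,c)|\bigr]$ as a random variable depending only on the independent random colors $(c_w)_{w \neq v}$ of the other vertices (the ``inner'' expectation is over the fresh color $c$ drawn for $v$ from $\Omega_v$). I would first compute $\E[Y_v]$ via linearity, then apply Azuma's inequality (\Cref{thm:azuma}) to establish concentration around this mean, and finally take a union bound over $v$ to obtain the ``simultaneously'' conclusion.

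For the expectation, linearity gives $\E_c[|N(v,c)|] = \sum_{w \neq v}\Pr_c[vw \in E]$ for any fixed coloring of $V\setminus\{v\}$, and averaging again over the random colors of those vertices yields $\E[Y_v] = \sum_{w \neq v}\Pr[vw \in E] \geq (n-1)(p + \epsilon)$ by the marginal edge probability hypothesis of $p' = p+\epsilon$.

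For concentration, the key observation is that changing the color of a single $w \neq v$ can only affect the single potential edge $vw$ incident to $v$, so for every fresh color $c$ of $v$ the count $|N(v,c)|$ changes by at most $1$; taking the expectation over $c$, the value of $Y_v$ also changes by at most $1$. Thus $Y_v$ is $1$-Lipschitz in each of the $n-1$ coordinates, and \Cref{thm:azuma} applied with all $c_i = 1$ yields
\[
\Pr\bigl[Y_v \leq \E[Y_v] - t\bigr] \leq \exp\bigl(-t^2/(2(n-1))\bigr).
\]
Choosing $t = (\epsilon/3)n$, which for $n$ large enough in terms of $\epsilon$ satisfies $\E[Y_v] - t \geq (p+\epsilon/2)n$, bounds the per-vertex failure probability by $\exp(-\Omega(\epsilon^2 n)) = o(1/n)$, so a union bound over the $n$ choices of $v$ gives the claim with probability $1 - o(1)$. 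The only step that is not entirely routine is verifying this bounded-differences property, which relies precisely on the coloring model structure: each potential edge $vw$ is a deterministic function of only the two endpoint colors, so recoloring a single $w$ perturbs only the $vw$ contribution to $|N(v,c)|$, for every $c$.
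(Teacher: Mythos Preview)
Your proof is correct and essentially identical to the paper's: both express $\E_c[|N(v,c)|]$ as a sum over $w \neq v$ of terms depending only on the color of $w$, observe that each such term lies in $[0,1]$ so that the bounded-differences inequality (\Cref{thm:azuma}) applies with all $c_i = 1$, and then take a union bound over $v$. The only cosmetic differences are that the paper uses $t = (\epsilon/4)n$ rather than $(\epsilon/3)n$ and phrases the decomposition in terms of the random variables $Y_w := \Pr_c[vw \in E]$ rather than directly in terms of the Lipschitz property, but these amount to the same argument.
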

\begin{proof}
    Fix a vertex $v$, and for each $w \neq v$, let $Y_w$ be the random variable over the color of $w$ that denotes the probability that the edge $vw$ is present when sampling the color of $v$ from its distribution. Note that $\E[Y_w] \geq p'$. Then 
    \[Y:= \E_{c}[|N(v,c)|] = \sum_{w\neq v} Y_w.\]
    We can then apply Azuma's inequality with $Y_w$ as coordinates, noting that the effect of each $Y_w$ on $Y$ is at most $1$ since $0\leq Y_w \leq 1$. We have
    \[ \E[Y] = \sum_{w \neq v} \E[Y_w] \geq (n-1) p', \]
    so letting $t := \frac{\varepsilon}{4}n$, using \Cref{thm:azuma}, we get the following bounds for large enough $n$:
    $$ Pr[Y \leq (p+\epsilon/2)n] \leq Pr[Y \leq (p'-\varepsilon/2)n] \leq Pr[Y \leq \E[Y] - t] \leq e^{- \frac{t^2}{2(n-1)}}\leq e^{- \varepsilon^2 n / 32}.$$
    
    By a union bound over all vertices $v$, the event that $\E_c[|N(v,c)|]\leq (p+\eps/2)n$ for any vertex $v$ has probability at most $n e^{-\eps^2n/32}$, which tends to $0$ for $n\rightarrow\infty$, proving the lemma.
\end{proof}
Thus, from now on, we condition on the high probability event of \Cref{lem:condition}. This allows us to argue that we can recolor any vertex such that it has a large neighborhood. Furthermore, using \Cref{lem:markovderivate}, we can also argue that for every vertex, relatively large neighborhoods must exist with a somewhat large probability. This allows us to use union bounds to show that large neighborhoods must exist at the same time as some fixed edges.

Our general strategy for showing the graph is connected with positive probability will be to mostly rely on the random coloring of all vertices that we start with, but recolor some vertices as necessary, using some large neighborhoods to connect vertices to.

Since we will recolor some vertices, the true neighborhoods in the final graph may be different (possibly also smaller) than in the random coloring for which we get these bounds. We use the error term $C:=C'(\epsilon)\log_2 n$ for an upper bound on the number of vertices that we recolor and aim to connect to (we may recolor more vertices, but we do not argue that we connect to these vertices, thus recoloring them does not matter). If we can guarantee a desired intersection of neighborhoods (or of a neighborhood with a fixed set) to contain at least $C+1$ vertices in the random coloring, we know that in the actual coloring at the end the intersection is non-empty.

Let $(r,c_r)$ be a vertex-color pair which maximizes $|N(r,c_r)|$. We recolor $r$ to $c_r$. If every vertex outside of $N(r,c_r)$ can be recolored to some color such that it has an edge to a vertex in $N(r,c_r)$, the graph can be connected. Otherwise, we pick a vertex-color pair $(b,c_b)$ which maximizes $|N(b,c_b)|$ among all vertex-color pairs for which $b\not\in N(r,c_r)\cup\{r\}$, and $|N(r,c_r)\cap N(b,c_b)|\leq C$.
We recolor $b$ to $c_b$.

In the following, we use the shorthands
\[R:=N(r,c_r),\; B:=N(b,c_b),\; \alpha:=|R|/n, \text{ and }\beta:=|B|/n.\]
The vertices $r$ and $b$, along with their respective neighborhoods $R$ and $B$, will be central to our argument, and we will find various ways to connect all other vertices to those sets.

\begin{lemma}
    $p<\alpha< 1-p$, and $p<\beta\leq\alpha$.
\end{lemma}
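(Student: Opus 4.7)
The plan is to establish the four inequalities separately by combining two ingredients: (i) the global maximality of $(r, c_r)$ as a vertex-color pair, and (ii) the fact that we are in the ``otherwise'' branch of the construction (i.e., $b$ has actually been defined), which produces a witness vertex $v^\star \notin R \cup \{r\}$ satisfying $N(v^\star, c) \cap R = \emptyset$ for every color $c$ in $v^\star$'s palette.

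For $\alpha > p$, I would apply \Cref{lem:condition} at $v = r$: since $|N(r, c_r)|$ is the maximum of $|N(v,c)|$ over \emph{all} vertex-color pairs, it is in particular at least $\max_c |N(r, c)| \geq \E_c[|N(r, c)|] > (p + \epsilon/2)n$, so $\alpha > p + \epsilon/2 > p$. The bound $\beta \leq \alpha$ is then immediate from the same global maximality, since $(b, c_b)$ is just one particular vertex-color pair and hence $|B| \leq |R|$.

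The two remaining bounds both exploit the witness $v^\star$. For $\alpha < 1-p$, observe that $N(v^\star, c) \subseteq V \setminus (R \cup \{v^\star\})$ for every $c$, so $|N(v^\star, c)| \leq n - |R| - 1$ pointwise; averaging over $c$ and using $\E_c[|N(v^\star, c)|] > (p + \epsilon/2)n$ yields $|R| < (1 - p - \epsilon/2)n - 1$, which is smaller than $(1-p)n$ for sufficiently large $n$. For $\beta > p$, I would pick $c^\star$ maximizing $|N(v^\star, c)|$; then $|N(v^\star, c^\star)| \geq \E_c[|N(v^\star, c)|] > (p + \epsilon/2)n$, and crucially $(v^\star, c^\star)$ satisfies every constraint in the definition of $(b, c_b)$: $v^\star \notin R \cup \{r\}$ is built into the choice of $v^\star$, and $|R \cap N(v^\star, c^\star)| = 0 \leq C$ follows directly from the defining property of $v^\star$. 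Hence $|B| \geq |N(v^\star, c^\star)| > pn$, giving $\beta > p$.

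I do not expect any substantial obstacle here; the real content of the lemma is bookkeeping, namely correctly translating the ``otherwise'' branch into the existence of the witness $v^\star$ and recognizing that this single witness simultaneously rules out a large $\alpha$ and forces a large $\beta$. The one mild subtlety is ensuring, for $n \geq n_0(\epsilon)$, that the additive $-1$ coming from excluding $v^\star$ itself from $V \setminus (R \cup \{v^\star\})$ is absorbed by the slack $\epsilon/2 \cdot n$, so that we really get the clean strict inequality $\alpha < 1 - p$ rather than the weaker $\alpha < 1 - p - \epsilon/2 + 1/n$.
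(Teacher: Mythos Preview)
Your proposal is correct and follows essentially the same approach as the paper: both arguments hinge on the witness vertex $v^\star$ produced by the ``otherwise'' branch, using it together with \Cref{lem:condition} to bound $\alpha$ from above and $\beta$ from below, while the bounds $\alpha>p$ and $\beta\leq\alpha$ come directly from the maximality of $(r,c_r)$. The only cosmetic difference is that for $\alpha<1-p$ the paper argues by contradiction via the \emph{maximum} neighborhood of $v^\star$, whereas you bound the \emph{expected} neighborhood size and carry the extra $-1$ and $\epsilon/2$ slack explicitly; this yields the same conclusion.
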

\begin{proof}
    We prove the four bounds independently:
    \begin{itemize}
        \item[$\alpha>p$:] Since $\E_{c}[|N(v,c)|]>pn$ for all $v$, clearly $\alpha n=|N(r,c_r)|$, the largest of all such neighborhoods, must be larger than $pn$ as well.
        \item[$\alpha<1-p$:] If we had $\alpha\geq 1-p$, a maximum size neighborhood (which has size at least $pn$) of each other vertex would need to intersect $R$. We assumed this is not the case, thus $\alpha<1-p$.
        \item[$\beta>p$:] Consider some vertex $v$ for which $(N(v,c_v)\cup \{v\}) \cap R=\varnothing$ for every color $v$. All vertex-color pairs $(v,c')$ were considered when maximizing to find $(b,c_b)$. Since $\E_c[|N(v,c)|]>pn$, we thus have $\beta n=|N(b,c_b)|>pn$, too.
        \item[$\beta\leq\alpha$:] This follows from the fact that $(b,c_b)$ is picked from a constrained set of vertex-color pairs, while $(r,c_r)$ is picked among all possible pairs. \qedhere
    \end{itemize}
\end{proof}

Note that if we have $R\cap B\not=\varnothing$, we can easily connect the graph: if we recolor each vertex $v\not\in R\cup B\cup\{r,b\}$ to some color $c$ such that $|N(v,c)|>pn$, $N(v,c)$ must intersect $R\cup B$. This follows from $|R\cup B| = |R|+|B|-|R\cap B|\geq \alpha n + \beta n-C\geq 2pn-C$, and $(2pn-C)+pn>n$. We thus assume in the following that $R\cap B=\varnothing$.

\begin{lemma}
    $\beta\leq\min(1-\alpha,1/2)$.
\end{lemma}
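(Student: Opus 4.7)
The plan is to observe that this is an essentially immediate consequence of the disjointness assumption $R \cap B = \varnothing$ (made just before the lemma) together with the previous lemma's bound $\beta \leq \alpha$. There are two inequalities to verify, $\beta \leq 1-\alpha$ and $\beta \leq 1/2$, and I would handle them in that order.

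For the first inequality, since $R$ and $B$ are disjoint subsets of the $n$-vertex set $V$, we have $|R| + |B| = |R \cup B| \leq n$. Dividing by $n$ gives $\alpha + \beta \leq 1$, i.e., $\beta \leq 1-\alpha$. This uses only the assumption $R \cap B = \varnothing$ that was introduced in the paragraph preceding the lemma statement (justified there because if $R \cap B \neq \varnothing$, the graph could already be easily connected).

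For the second inequality, I would combine the previous lemma's bound $\beta \leq \alpha$ with the bound $\alpha + \beta \leq 1$ just established. Adding $\beta \leq \alpha$ to itself in the form $\beta \leq \alpha$ and $\beta \leq 1 - \alpha$ gives $2\beta \leq \alpha + (1-\alpha) = 1$, so $\beta \leq 1/2$.

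There is no real obstacle here: the work has already been done in setting up the case analysis ($R \cap B = \varnothing$) and establishing $\beta \leq \alpha$. The lemma is a purely combinatorial bookkeeping step that packages these two observations into a convenient form for use in the subsequent casework on the sizes of the neighborhoods $R$ and $B$.
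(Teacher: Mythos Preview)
Your proof is correct and follows exactly the same approach as the paper's, which simply notes that the lemma ``follows directly from $R\cap B=\varnothing$ and thus $\alpha+\beta\leq 1$.'' You have just spelled out more explicitly how $\beta\leq 1/2$ is deduced from $\alpha+\beta\leq 1$ together with the previously established bound $\beta\leq\alpha$.
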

\begin{proof}
    This follows directly from $R\cap B=\varnothing$ and thus $\alpha+\beta\leq 1$.
\end{proof}

To connect the graph in the remaining cases, we need to introduce some more definitions.
Intuitively, we say that any vertex $v$ is \emph{obligate red}, if it cannot robustly be connected to the blue set $B$. More formally, we say that a vertex $v\not\in B\cup\{b\}$ is obligate red, if there exists no color $c$, such that $|N(v,c)\cap B|> C$. Similarly, we say that a vertex $v\not\in R\cup\{r\}$ is \emph{obligate blue}, if there exists no color $c$ such that $|N(v,c)\cap R|>C$. Let $OR$ be the set of obligate red vertices, and $OB$ the set of obligate blue vertices.

Note that no vertex can be both obligate blue and obligate red. When a vertex is neither obligate blue nor obligate red, we say it is \emph{non-obligate}.

We start with giving some guarantees on the sizes of neighborhoods of obligate blue and obligate red vertices. 
\begin{lemma}
    \label{lemma:neighborhood_size_guarantees}
    Let $u \in OB, v \in OR$. Then, over the distribution of the color of $u$ $(v)$, the following hold for the neighborhood of $u$ $(v)$.
    \begin{enumerate}
        \item $Pr[|N(u,c)| \geq \frac{p+p\beta -\beta}{p}n + \eps n /2] \geq 1-p$.
        \item $Pr[|N(v,c)| \geq \frac{p+p\alpha -\alpha}{p}n + \eps n /2] \geq 1-p$.
        \item $Pr[|N(u,c)| \geq (1-p)(1-\beta)n + \varepsilon n / 2] \geq 1-\sqrt{p}$.
        \item $Pr[|N(v,c)| \geq (1-p)(1-\alpha)n + \varepsilon n / 2] \geq 1-\sqrt{p}$.
    \end{enumerate}
\end{lemma}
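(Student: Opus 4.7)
The plan is to prove (1) and (3) in detail and observe that (2) and (4) follow by an essentially identical argument after swapping the roles of $(r,c_r)$ and $(b,c_b)$. The proofs of (1) and (3) share a common skeleton: derive a uniform upper bound $|N(u,c)| \leq \beta n$ over all colors $c$ of $u$, combine it with the expectation lower bound coming from the event on which we have conditioned (\Cref{lem:condition}), and feed both into the reverse-Markov estimate of \Cref{lem:markovderivate}. Only the target probability level ($1-p$ versus $1-\sqrt{p}$) differs between the two bounds.

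First I would establish the uniform cap: for every color $c$ of an obligate blue vertex $u$, one has $|N(u,c)| \leq \beta n$. This is the only place where the obligateness hypothesis enters. By definition, $u \notin R \cup \{r\}$ and $|N(u,c) \cap R| \leq C$ for \emph{every} color $c$, so every pair $(u,c)$ is a legitimate candidate in the constrained maximization that defined $(b,c_b)$; hence $|N(u,c)| \leq |N(b,c_b)| = \beta n$. After conditioning on the event of \Cref{lem:condition} we also have $\E_c[|N(u,c)|] \geq (p+\epsilon/2)n$, so \Cref{lem:markovderivate} applied with upper bound $\beta n$ yields
\[Pr\bigl[|N(u,c)| \geq \ell\bigr] \;\geq\; \frac{(p+\epsilon/2)n - \ell}{\beta n - \ell}\]
for every $\ell < \beta n$.

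For (1), I would solve $\frac{(p+\epsilon/2)n - \ell}{\beta n - \ell} \geq 1-p$ for $\ell$, which rearranges to $\ell \leq \frac{p+p\beta-\beta}{p}\,n + \frac{\epsilon}{2p}\,n$. Since $1/p \geq 1$, the stated threshold $\frac{p+p\beta-\beta}{p}n + \epsilon n/2$ is admissible. For (3), I would first invoke \Cref{lemma:nice-identities} in the form $1-p = \sqrt{p}$ (equivalent to $(1-p)^2 = p$). Solving $\frac{(p+\epsilon/2)n - \ell}{\beta n - \ell} \geq 1-\sqrt{p}$ and using $\frac{p}{1-p} = 1-p$ to collapse the $\beta$ terms gives $\ell \leq (1-p)(1-\beta)n + \frac{\epsilon}{2(1-p)}n$; since $1/(1-p) \geq 1$, the stated threshold is once again admissible.

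Bounds (2) and (4) follow by the analogous argument for obligate red $v$. The uniform cap is now the weaker (but sufficient) $|N(v,c)| \leq \alpha n$, which comes for free from the unconstrained maximality of $(r,c_r)$ -- no use of obligateness is even required for this cap -- and one simply replaces $\beta$ by $\alpha$ throughout. The only conceptually nontrivial step in the whole argument is the uniform cap: once one notices that the definition of obligate blue was engineered precisely so that every pair $(u,c)$ is a valid candidate in the selection of the blue vertex, the rest reduces to routine algebra on the reverse-Markov inequality, with the sole ``magic'' being the identity $(1-p)^2 = p$ that makes the cleaner right-hand side of (3) fall out.
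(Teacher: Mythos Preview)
Your proposal is correct and follows essentially the same approach as the paper: bound $|N(u,c)|$ above by $\beta n$ (respectively $\alpha n$), combine with the conditioned lower bound $\E_c[|N(\cdot,c)|]\ge (p+\epsilon/2)n$, and apply \Cref{lem:markovderivate}; the only algebraic simplification in part (3) is indeed the identity $\sqrt{p}=1-p$. If anything, your justification of the cap $|N(u,c)|\le\beta n$ --- observing that an obligate blue vertex satisfies both constraints in the maximization defining $(b,c_b)$ for \emph{every} color --- is more explicit than the paper's appendix, which simply asserts the bound.
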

\begin{proof}[Proof (sketch)]
We can upper bound the size of a neighborhood of any vertex by $\alpha n$. Furthermore, we can upper bound the size of a neighborhood of any obligate blue vertex $u$ by $\beta n$. The lemma follows from applying \Cref{lem:markovderivate} and simplifying. The full calculations are found in \Cref{app:calculations}.
\end{proof}

\begin{corollary}\label{cor:obligateblueConnectsToB}
Let $S$ be a vertex set of size $|S|\leq C$. Let $u\in OB$. Then, 
$$Pr_c[N(u,c)\cap (B\setminus S)\neq \varnothing]\geq1-p.$$
\end{corollary}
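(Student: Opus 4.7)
The plan is to combine item~1 of \Cref{lemma:neighborhood_size_guarantees} with the defining property of being obligate blue. First, I would observe that the expression $\tfrac{p+p\beta-\beta}{p}$ in item~1 simplifies. Using \Cref{lemma:nice-identities} (equivalently $1/p=3-p$), we get $1 - 1/p = p-2 = -\phi$, hence $\tfrac{p+p\beta-\beta}{p} = 1 + \beta(1-1/p) = 1 - \phi\beta$. So item~1 says: with probability at least $1-p$ over the random color $c$ of $u$, we have $|N(u,c)| \geq (1-\phi\beta)n + \tfrac{\eps}{2}n$.

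Next, I would exploit that $u \in OB$ implies $|N(u,c) \cap R| \leq C$ for \emph{every} color $c$. Partitioning the vertex set as $V = R \sqcup B \sqcup O \sqcup \{r,b\}$, where $O := V \setminus (R \cup B \cup \{r,b\})$ satisfies $|O| \leq (1-\alpha-\beta)n$, one obtains the deterministic lower bound
\[
|N(u,c) \cap B| \;\geq\; |N(u,c)| - |N(u,c) \cap R| - |O| - 2 \;\geq\; |N(u,c)| - (1-\alpha-\beta)n - C - 2.
\]
Substituting the bound on $|N(u,c)|$ from the previous step and using $\phi-1 = 1-p$ (readily verifiable from $\phi = (1+\sqrt{5})/2$ and $p = 2-\phi$), this collapses to
\[
|N(u,c) \cap B| \;\geq\; \bigl(\alpha - (1-p)\beta\bigr)n + \tfrac{\eps}{2}n - C - 2.
\]

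Since $p < \beta \leq \alpha$, the coefficient is at least $\alpha - (1-p)\alpha = p\alpha > p^2 > 0$. Thus for $n$ large enough (absorbed into $n_0(\eps)$ in \Cref{thm:coloring_upper_n_infinity}), the right-hand side exceeds $C \geq |S|$, and therefore $|N(u,c) \cap (B\setminus S)| \geq |N(u,c) \cap B| - |S| > 0$. This gives the conclusion with the same probability $1-p$ inherited from item~1.

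The only subtlety is to verify that the coefficient $\alpha - (1-p)\beta$ of the linear-in-$n$ term remains bounded below by an absolute positive constant across the entire regime $(p, 1-p) \ni \alpha$, $(p, 1/2] \ni \beta$, $\beta \leq \alpha$; the inequality $\beta \leq \alpha$ supplied by the earlier analysis is exactly what makes this work, and the extra $\tfrac{\eps}{2}n$ slack from \Cref{lem:condition} comfortably absorbs the $O(\log n)$ loss coming from $C = C'(\eps)\log_2 n$. No other step is delicate; the whole proof is essentially a one-line chain of inequalities once the golden-ratio identities are unpacked.
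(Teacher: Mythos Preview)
Your proof is correct and follows essentially the same approach as the paper: both invoke item~1 of \Cref{lemma:neighborhood_size_guarantees}, use the defining bound $|N(u,c)\cap R|\le C$ for obligate blue vertices, and close with the inequality $\alpha\ge\beta$ together with the golden-ratio identities. The only cosmetic difference is that you directly lower-bound $|N(u,c)\cap B|$ via the cover $V=R\cup B\cup O\cup\{r,b\}$, whereas the paper phrases the same counting as a pigeonhole constraint $|N(u,c)|+|B\setminus S|\le n-|R|+C$; your final constant $p\alpha>p^2$ is in fact a bit sharper than the paper's $\eps/4$, but the argument is the same.
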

\begin{proof}
    By \Cref{lemma:neighborhood_size_guarantees}, part 1, we have
    $$Pr[|N(u,c)| \geq \frac{p+p \beta - \beta}{p}n + \eps n /2] \geq 1-p.$$
    We wish to show that when $|N(u,c)|\geq \frac{p+p\beta-\beta}{p}n+\eps n/2$, we have that $N(u,c)$ and $B\setminus S$ intersect. Since $|N(u,c) \cap R| \leq C$ and $(B\setminus S)\cap R=\varnothing$, we have that $|N(u,c)|+|B\setminus S|$ can be at most $n-|R|+C$ if the sets do not intersect. It thus suffices to show that
    $$ \frac{p+p \beta - \beta}{p}n + \eps n/ 2  + (\beta n - C) \overset{!}{>} n - \alpha n + C, $$
    or equivalently,
    $$ \frac{p+p\beta -\beta}{p} + \eps/2 + \alpha  + \beta  - 2C/n - 1 \overset{!}{>} 0.$$
    First, we use that $\epsilon/4>2C/n$ for $n$ large enough.
    $$ \frac{p+p\beta -\beta}{p} + \eps/2 + \alpha + \beta - 2C/n - 1 > \frac{p + p\beta - \beta}{p} + \eps/4 + \alpha + \beta - 1 $$
    $$ = (2-\frac{1}{p})\beta + \eps/4 + \alpha = (p-1)\beta + \alpha + \eps/4 > (\alpha - \beta) + \eps/4 \geq \eps /4> 0,$$
    since $\alpha \geq \beta$. Note that we used \Cref{lemma:nice-identities} here.
\end{proof}

The symmetric statement for obligate red vertices also follows from \Cref{lemma:neighborhood_size_guarantees}:

\begin{corollary}\label{cor:obligateredConnectsToR}
Let $S$ be a vertex set of size $|S|\leq C$. Let $v\in OR$. Then, 
$$Pr_c[N(v,c)\cap (R\setminus S)\neq \varnothing]\geq 1-p.$$
\end{corollary}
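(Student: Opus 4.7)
The plan is to mirror the proof of Corollary~\ref{cor:obligateblueConnectsToB}, swapping the roles of $R$ and $B$ (and of $\alpha$ and $\beta$), and invoking part~2 of Lemma~\ref{lemma:neighborhood_size_guarantees} in place of part~1.

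First I would note that by the defining property of $OR$, any $v \in OR$ satisfies $|N(v,c) \cap B| \leq C$ for every color $c$. To force $N(v,c) \cap (R \setminus S) \neq \varnothing$, it thus suffices that $|N(v,c)| + |R\setminus S|$ exceeds $n - |B| + C$, since $R$ and $B$ are disjoint and only $C$ vertices of $N(v,c)$ may lie in $B$. Using $|R\setminus S| \geq \alpha n - C$ and applying part~2 of Lemma~\ref{lemma:neighborhood_size_guarantees}, with probability at least $1-p$ we have $|N(v,c)| \geq \frac{p+p\alpha-\alpha}{p}n + \varepsilon n/2$, and the required inequality reduces, after dividing by $n$ and absorbing the $2C/n$ term into $\varepsilon/4$ for $n$ large, to
$$ \frac{p+p\alpha-\alpha}{p} + \varepsilon/4 + \alpha + \beta - 1 > 0. $$

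Using the identity $\tfrac{1}{p} - 2 = 1-p$ from Observation~\ref{lemma:nice-identities}, this left-hand side simplifies to $(p-1)\alpha + \beta + \varepsilon/4$. The main obstacle is that, unlike in the obligate blue case where one cleanly uses $\alpha \geq \beta$, here $\beta \leq \alpha$ points the wrong way. My plan to overcome this is to exploit the \emph{upper} bound $\alpha < 1-p$ instead: since $p-1 < 0$, this gives $(p-1)\alpha > (p-1)(1-p) = -(1-p)^2 = -p$, where the last equality is again Observation~\ref{lemma:nice-identities}. Combining this with the bound $\beta > p$ yields $(p-1)\alpha + \beta > -p + p = 0$, so the whole expression is at least $\varepsilon/4 > 0$, completing the argument.
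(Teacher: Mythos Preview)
Your proposal is correct and follows essentially the same approach as the paper: both invoke part~2 of Lemma~\ref{lemma:neighborhood_size_guarantees} and reduce the desired inequality to $(p-1)\alpha+\beta+\varepsilon/4>0$, which is then established via the bounds $\alpha<1-p$ and $\beta>p$ together with $(1-p)^2=p$. The paper phrases this last step as ``$\beta\geq\alpha(1-p)$ since $\beta\geq p$ and $\alpha\leq 1-p$,'' but that is precisely your argument organized differently.
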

\begin{proof}[Proof (sketch)]
    The proof works the same way as the proof of \Cref{cor:obligateblueConnectsToB}, except that we use part 2 of \Cref{lemma:neighborhood_size_guarantees}, and we use that $\beta \geq \alpha(1-p)$ since $\beta \geq p$ and $\alpha \leq 1-p$.
\end{proof}

We will now prove \Cref{thm:coloring_upper_n_infinity} by case distinction on the number of obligate red and obligate blue vertices.

\begin{lemma}\label{lem:noobligates}
    If there are either no obligate red or no obligate blue vertices, the graph can be connected.
\end{lemma}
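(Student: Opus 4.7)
The plan is to handle the case $OR = \varnothing$ explicitly; the case $OB = \varnothing$ will then follow by interchanging the roles of $r, R$ and $b, B$. The guiding idea is that when $OR = \varnothing$, every vertex $v \notin B \cup \{b\}$ is by definition not obligate red, so there exists a color $c_v^*$ with $|N(v, c_v^*) \cap B| > C$. This turns $B$ into a natural ``hub'' to which every other vertex can be attached.

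I would construct the final coloring $\chi$ as follows: set $\chi(b) = c_b$; keep $\chi(v) = \chi_0(v)$ for every $v \in B$ (where $\chi_0$ denotes the fixed random coloring on which we have conditioned); and for every $v \notin B \cup \{b\}$, set $\chi(v) = c_v^*$. I would then verify that every $v \neq b$ lies within distance two of $b$ in the resulting graph. For $v \in B$, the edge $\{v,b\}$ is present in the final graph by the definition of $B = N(b,c_b)$, because $\chi(b) = c_b$ and $\chi(v) = \chi_0(v)$ are precisely the colors under which $v \in B$ was declared a neighbor of $b$. For $v \notin B \cup \{b\}$, I would pick a witness $w \in N(v,c_v^*) \cap B$, which exists since $|N(v,c_v^*) \cap B| > C \geq 1$, and this gives a length-two path $v \to w \to b$.

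The only subtle point concerns the standing warning that recolored vertices inside a target set can invalidate the random-coloring neighborhood computations. Here, however, the set $S$ of recolored vertices equals $\{b\} \cup (V \setminus (B \cup \{b\})) = V \setminus B$, so $B \cap S = \varnothing$. Hence the witness $w \in B$ still carries its random color $\chi_0(w)$, and the edge $\{v,w\}$ in the final graph is governed by $(\chi(v), \chi_0(w)) = (c_v^*, \chi_0(w))$, which is exactly the condition defining $w \in N(v, c_v^*)$. Thus, even though $|S|$ is linear in $n$ and far larger than the usual recoloring budget $C$, no harm is done because the crucial intersection is with $B$, which is disjoint from $S$. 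I expect no genuine obstacle here: the lemma is essentially a structural observation that $OR = \varnothing$ lets us dispense with $r$ and $R$ entirely and reduce the problem to a one-hub connectivity argument that sidesteps the recolored-set-size constraint used elsewhere in the paper.
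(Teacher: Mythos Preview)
Your proposal is correct and follows essentially the same approach as the paper's proof: recolor every vertex outside $B\cup\{b\}$ so that its neighborhood meets $B$, and observe that this connects everything to $b$ in at most two steps. The paper's argument is a two-line version of yours; your additional discussion of why recoloring vertices outside $B$ does not disturb the witnesses in $B$ (since $B$ retains the random coloring) makes explicit a point the paper leaves implicit, but the underlying strategy is identical.
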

\begin{proof}
    If there are no obligate red vertices, we recolor every vertex $v\not\in B\cup\{b\}$ to a color $c$ such that $N(v,c)$ intersects $B$. Every vertex is thus connected to $b$ either directly or through a vertex in $B$, proving that the graph is connected.
    Symmetrically, if there are no obligate blue vertices, all vertices can be connected to $r $ or $R$.
\end{proof}

\begin{lemma}\label{lem:manyobligates}
    If $|OR|>C$  or $|OB|>C$, the graph can be connected.
\end{lemma}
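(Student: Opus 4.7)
The cases $|OR|>C$ and $|OB|>C$ are symmetric under swapping the roles of $(r,c_r)$ and $(b,c_b)$ (together with $R$ and $B$), so I focus on $|OR|>C$.

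The plan is to use the abundance of obligate red vertices as a source of candidate bridges between $R$ and $B$. By \Cref{cor:obligateredConnectsToR}, each obligate red vertex $v$ admits, with probability at least $1-p$ over its initial random color, a neighbor in $R\setminus S$ for any small forbidden set $S$. Crucially, the initial colors of distinct vertices are independent, so once I condition on the colors outside $OR$ (which fixes the sets $R$, $B$, $OR$, $OB$ and the non-obligate vertices), the initial colors of the $|OR|>C=C'(\epsilon)\log_2 n$ obligate red vertices remain mutually independent samples. A Chernoff-type bound then shows that, with probability tending to $1$, a positive fraction of them already have a neighbor in $R$ under the initial coloring alone. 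In particular, the event that \emph{no} obligate red vertex is adjacent to $R$ via its initial color has probability at most $p^{|OR|}\le p^{C}=n^{-\Omega(1)}=o(1)$.

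Given this concentrated event, I build the connected subgraph in three stages. First, every obligate red vertex not already attached to $R$ via its initial color is recolored individually to attach to $R$, which is feasible because it is not obligate blue. Symmetrically, every obligate blue vertex is recolored to attach to $B$, and every non-obligate vertex is recolored to attach to $B$. These recolorings produce two candidate components: a \emph{red} one containing $R\cup\{r\}\cup OR$ and a \emph{blue} one containing $B\cup\{b\}\cup OB$ together with the non-obligate vertices. Second, I establish a single bridging edge between these two components. Here I appeal once more to $|OR|>C$: among the $\Omega(\log n)$ independent initial colors of obligate red vertices, a probabilistic or pigeonhole argument -- using that the union of non-obligate and obligate blue vertices has size $\Theta(n)$, together with the expected-neighborhood lower bound from \Cref{lem:condition} -- forces the existence of at least one obligate red vertex whose initial color places it adjacent to some vertex in the blue component. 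Third, at most $O(|OR|)$ deterministic vertex recolorings have taken place, and the total recolorings whose connections we actually rely on can be accommodated within the budget $C$ used to define the ``obligate'' sets, so the entire construction is consistent with the probabilistic setup and yields a connected graph with positive probability.

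The main obstacle is establishing the bridge in the second stage. Obligate red vertices are, by definition, poorly connected to $B$ itself, so the bridge must run through the non-obligate or $OB$ sets, and one needs a clean probabilistic/counting argument that produces such an edge while respecting the $C=\Theta(\log n)$ budget. I expect this argument to mirror the proof of \Cref{cor:obligateblueConnectsToB}: apply \Cref{lem:markovderivate} to a suitable neighborhood size bound extracted from \Cref{lemma:neighborhood_size_guarantees}, then invoke $|OR|>C$ to pigeonhole over the independent initial colors, and finally verify the needed inequality using $\beta\le\min(1-\alpha,1/2)$, $\alpha\le 1-p$ and the identities $\frac{1}{p}-2=1-p$, $(1-p)^2=p$ from \Cref{lemma:nice-identities}.
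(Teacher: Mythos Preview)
Your plan has a real gap at exactly the point you flag as ``the main obstacle,'' and the difficulty is structural, not just a matter of filling in calculations. You build a red component $R\cup\{r\}\cup OR$ and a blue component $B\cup\{b\}\cup OB\cup(\text{non-obligate})$, and then ask for a single obligate red vertex whose color produces an edge into the blue component. But obligate red vertices are \emph{defined} by the property that no color gives them more than $C$ neighbors in $B$; there is no lower bound at all, so every $v\in OR$ may well have $N(v,c)\cap B=\varnothing$ for every $c$. The remaining blue-component vertices (non-obligate and $OB$) have been \emph{recolored} to reach $B$, and nothing controls whether an $OR$ vertex under its initial color is adjacent to them under their new colors. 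So neither your Chernoff argument on initial colors nor \Cref{lemma:neighborhood_size_guarantees} yields the bridge: those bounds concern $|N(v,c)|$, not intersection with a set whose members carry arbitrary recolored states. Your appeal to independence of initial colors in $OR$ is also off: the initial coloring is already fixed (we conditioned on the event of \Cref{lem:condition}); re-randomizing it is really just an existence argument, and existence of a single bridge color for one $v\in OR$ is precisely what fails.

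The paper's argument avoids this by not trying to connect the two components through a single edge. It takes a subset $OB'$ of $C=\Theta(\log n)$ vertices from the \emph{larger} obligate set and uses each of them as a bridge in the opposite direction: for $u\in OB'$ and $v\in OR$, the events ``$u$ has a neighbor in $B\setminus OB'$'' (probability $\ge 1-p$ by \Cref{cor:obligateblueConnectsToB}) and ``$uv\in E(G)$'' (probability $\ge p+\eps$) overlap with probability at least $\eps$, so by linearity of expectation there is a color for $u$ (and colors for an $\eps$-fraction of $OR$) realizing both simultaneously. One bridge therefore absorbs an $\eps$-fraction of $OR$ into the $B$-side; iterating $O(\log n)\le C$ times exhausts $OR$. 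The missing idea in your plan is this iterative $\eps$-fraction covering: it replaces the impossible ``one bridge across'' by logarithmically many bridges, each of which only needs to reach its \emph{own} side robustly and the other side with constant probability per target vertex.
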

\begin{proof}
   
    \begin{figure}[tb]
        \centering
        \includegraphics{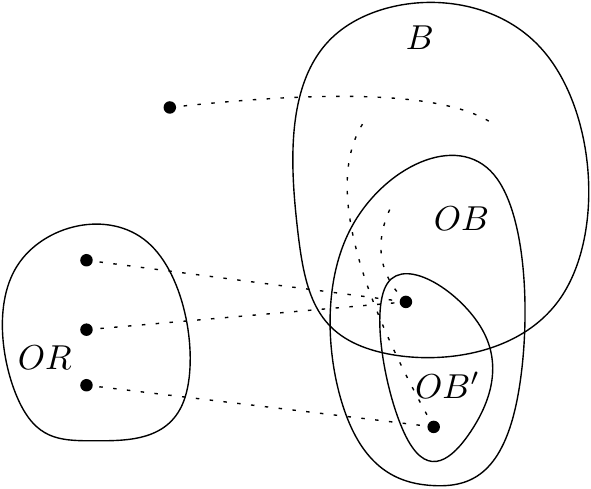}
        \caption{How to connect the graph when $|OB|>C$.}
        \label{fig:more_than_log}
    \end{figure}

    Suppose without loss of generality that $|OB|>|OR|$ (the other case is symmetric). Let $OB'\subset OB\setminus \{b\}$ be a subset of $C$ obligate blue vertices. We will recolor all vertices in $OB'$ as well as all non-obligate vertices such that they connect into $B\setminus OB'$. We will prove that each vertex in $OB'$ has a color that allows it to connect to $B\setminus OB'$ as well as a constant proportion of the remaining vertices in $OR$. Since $C$ is logarithmic in $n$, we are able to cover all vertices in $OR$ in this way, and thus connect the graph. A sketch of this can be seen in \Cref{fig:more_than_log}. 
    
    For any $u \in OB'$ and $v \in OR$, if we let each of them pick a color independently at random from their respective distributions, we have that $Pr[uv \in E(G)] \geq p' = p + \eps$. Furthermore, by \Cref{cor:obligateblueConnectsToB}, $Pr[N(u,c)\cap(B\setminus OB')\neq \varnothing]\geq (1-p)$.
    Thus, letting $Y_{u,v}$ be the indicator random variable that is $1$ if and only if $uv \in E(G)$ and $N(u,c) \cap (B\setminus OB') \neq \varnothing$, we have
    $$ Pr[Y_{u,v}=1] \geq p' + (1 - p) - 1 \geq \eps.$$
    Set $Y_u = \sum_{v \in OR}Y_{u,v}$ and note that if $N(u,c)$ and $B
    \setminus OB'$ do not intersect, then $Y_u=0$, and otherwise $Y_u$ counts the neighbors of $u$ in $OR$ (according to the new coloring of $OR$, not the one we started with). Choosing the colors of $u$ and all $v \in OR$ independently at random from their respective distributions, by linearity of expectation we get
    $$ \E[Y_u] = \sum_{v\in OR} \E[Y_{u,v}] \geq \eps |OR|. $$
    This implies that there is a color $c$ for $u$ and a choice of colors for the vertices in $OR$ such that $Y_u \geq \eps |OR|$, that is, such that $N(u,c) \cap (B\setminus OB') \neq \varnothing$ and $u$ has at least $\eps |OR|$ neighbors in $OR$ according to the new coloring of $OR$. We color $u$ in $c$ and fix the colors assigned to these $\eps|OR|$ neighbors of $u$ by the new coloring. Thus, all these at least $\eps |OR|$ vertices in $OR$ become neighbors of $u$, and $u$ has an edge to some vertex in $B\setminus OB'$.
    
    By using one vertex from $OB'$, we thus connected an $\eps$-fraction of $OR$. We repeat this process until no more vertices remain in $OR$. Note that we can always match at least one vertex, which we must take into account as soon as $\eps |OR|$ becomes smaller than $1$. We thus use at most
    $$1+ \eps^{-1} + \log_{\frac{1}{1-\eps}}(|OR|)\leq 1+\eps^{-1} + \frac{\log_2(n)}{\log_2(\frac{1}{1-\eps})}\leq C$$
    vertices from $OB'$, picking $C'(\eps)$ large enough with respect to $\eps$.
    
    The case of $|OR| > |OB|$ works in a symmetrical way, using \Cref{cor:obligateredConnectsToR}.
\end{proof}

We thus assume in the following that the numbers of obligate red and obligate blue vertices are more than~$0$ and at most $C$.

\begin{lemma}\label{lem:matching}
    If there is an injective mapping from $OR$ to $OB\setminus\{b\}$, or an injective mapping from $OB$ to $OR\setminus\{r\}$, the graph can be connected.
\end{lemma}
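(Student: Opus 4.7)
The plan is to explicitly construct a connected coloring by combining three ingredients. For the injection $\phi\colon OR \to OB \setminus \{b\}$ (the symmetric case is analogous), I would first recolor each non-obligate vertex $w$ with a color yielding $|N(w,c_w)\cap R|>C$ or $|N(w,c_w)\cap B|>C$, which exists by the non-obligate definition. Next, for each $v\in OR$ not selected as a bridge endpoint I would apply \Cref{cor:obligateredConnectsToR} with the size-$O(C)$ set $S$ of bridge and already-recolored vertices to obtain a color making $v$ connect into $R\setminus S$; symmetrically for $u\in OB$ via \Cref{cor:obligateblueConnectsToB}. After this step, the ``red side'' $\{r\}\cup R\cup OR$ and ``blue side'' $\{b\}\cup B\cup OB$ are each internally connected, and every other vertex attaches to one of them.

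The remaining crux is to establish a \emph{bridge}: a pair $(v^\ast,\phi(v^\ast))$ together with colors such that $v^\ast$ has a neighbor in $R$, $\phi(v^\ast)$ has a neighbor in $B$, and the edge $v^\ast\phi(v^\ast)$ is present. For a single pair $(v,u)=(v,\phi(v))$ with independently sampled colors, let $A$, $B'$, $C$ denote these three events respectively. \Cref{cor:obligateredConnectsToR} and \Cref{cor:obligateblueConnectsToB} give $\Pr[A], \Pr[B']\geq 1-p$; independence of $c_v,c_u$ yields $\Pr[A\cap B']\geq (1-p)^2=p$ via \Cref{lemma:nice-identities}; and the hypothesis gives $\Pr[C]\geq p+\epsilon$. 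A naive inclusion-exclusion only yields $\Pr[A\cap B'\cap C]\geq 2p-1+\epsilon$, which is negative for small $\epsilon$; this is the main technical obstacle.

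To bypass this, I would exploit the $|OR|$ pairs guaranteed by the injection, whose triple events are mutually independent since they involve disjoint vertex sets. A one-sided estimate of the form $\Pr[A\cap C]\geq \Pr[C]-\Pr[\neg A]\geq(p+\epsilon)-p=\epsilon$ (and its symmetric analogue for $B'\cap C$) supplies a per-pair positive probability for partial bridge configurations, which I would combine with an averaging argument across the $|OR|$ pairs to conclude that at least one pair realizes the full triple event. The spare obligate-blue vertex outside the image of $\phi$ (guaranteed by $|OR|\leq|OB|-1$) provides the slack needed to absorb the bridge endpoints into the sets $S$ in the corollaries. The hardest step is converting these per-pair partial bounds into a positive lower bound on $\Pr[A\cap B'\cap C]$ for at least one pair, likely through a conditional argument exploiting the constraints that the obligate-vertex definitions impose on the coloring-model adversary.
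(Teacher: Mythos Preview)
Your architecture maintains two connected components (red and blue sides) and then seeks a \emph{bridge} pair $(v^\ast,\phi(v^\ast))$ realizing three events simultaneously: $v^\ast$ hits $R$, $\phi(v^\ast)$ hits $B$, and the edge $v^\ast\phi(v^\ast)$ is present. You correctly observe that the union bound only gives $\Pr[A\cap B'\cap C]\geq 2p-1+\epsilon<0$, and your proposed rescue --- averaging over the $|OR|$ pairs using the partial estimates $\Pr[A\cap C]\geq\epsilon$ and $\Pr[B'\cap C]\geq\epsilon$ --- does not close the gap. Independence across pairs is of no help if each individual triple probability may be zero, and nothing you have written rules that out: since $A$ depends only on $c_{v^\ast}$, $B'$ only on $c_{\phi(v^\ast)}$, and the edge event is an arbitrary measurable subset of $\Omega_{v^\ast}\times\Omega_{\phi(v^\ast)}$, the constraints $\Pr[A],\Pr[B']\geq 1-p$ and $\Pr[C]\geq p+\epsilon$ are compatible with $C$ living entirely in the complement of $A\times B'$, which has measure $1-(1-p)^2=1-p>p+\epsilon$ for small $\epsilon$. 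The vague appeal to ``constraints that the obligate-vertex definitions impose'' is not a proof, and I do not see how to turn it into one.

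The paper sidesteps the three-event obstacle by \emph{not} trying to keep the red side alive. Given the injection $f\colon OR\to OB\setminus\{b\}$, it connects \emph{everything} to $b$ through $B\setminus OB$: non-obligate vertices recolor to hit $B$ robustly (and since $|OB|\leq C$, they hit $B\setminus OB$); each obligate red $v$ is routed through its partner $f(v)$. For the pair $(v,f(v))$ one only needs \emph{two} events --- the edge $vf(v)$ is present, and $f(v)$ has a neighbour in $B\setminus OB$ --- with probabilities at least $p'$ and $1-p$ respectively by \Cref{cor:obligateblueConnectsToB}, so the union bound gives probability at least $p'+(1-p)-1=\epsilon>0$. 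This is exactly the calculation already used in \Cref{lem:manyobligates}. The red neighbourhood $R$ plays no role in the final spanning tree, which is why your third event is unnecessary.
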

\begin{proof}
    We first assume an injective mapping $f$ from $OR$ to $OB\setminus\{b\}$ exists. The opposite case works symmetrically.
    In this case we aim to connect every vertex to $b$, by connecting each obligate red vertex $v$ to the vertex $f(v)$. These obligate blue vertices, as well as the non-obligate vertices outside $B\setminus\{b\}$ are then connected to some vertex in $B\setminus OB$. Since these vertices are connected to $b$, the whole graph is connected. This is shown in \Cref{fig:matchingred}. Note that we recolor the vertices in $OB$ and outside $B$, but not those in $B\setminus OB$.

    \begin{figure}[tb]
        \centering
        \includegraphics{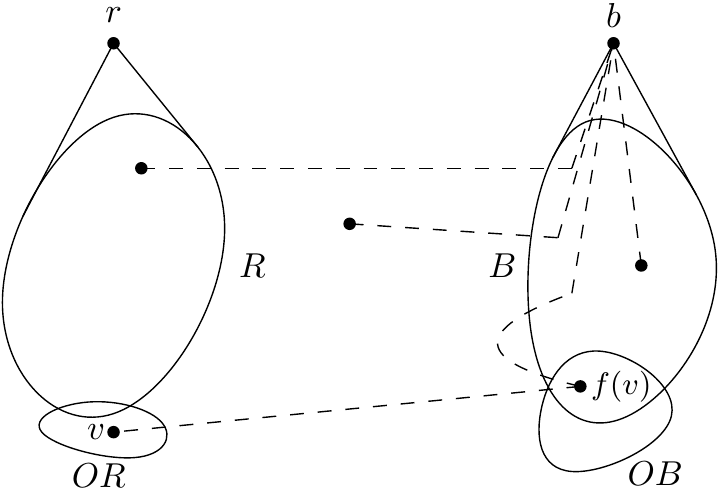}
        \caption{How to connect the graph given an injective mapping $f$ from $OR$ to $OB\setminus\{b\}$.}
        \label{fig:matchingred}
    \end{figure}

    As in the proof of \Cref{lem:manyobligates}, we let $v\in OR$ and $f(v)\in OB$ pick random colors independently. Then, by \Cref{cor:obligateblueConnectsToB} and since $Pr[vf(v)\in E(G)]\geq p'$, there is again a non-zero probability of both $v$ connecting to $f(v)$ and $f(v)$ connecting to $B\setminus OB$. Thus, we can pick such colors for each pair of vertices $(v,f(v))$, for $v\in OR$.
\end{proof}

We can enhance the previous lemma by considering non-obligate vertices which also have a large probability (at least $1-p$) to robustly connect to either of the two sets $R$ and $B$ (as obligate vertices are guaranteed to, by \Cref{cor:obligateblueConnectsToB} and \Cref{cor:obligateredConnectsToR}, respectively):

\begin{lemma}\label{lem:vertexActingAsObligate}
    If there exists a non-obligate vertex $v\not\in\{r,b\}$ such that $Pr_c[|N(v,c)\cap S|>C]\geq 1-p$ for $S=R$ or $S=B$, the graph can be connected.
\end{lemma}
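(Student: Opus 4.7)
Without loss of generality take $S = R$; the case $S = B$ is symmetric. Since the previous lemmas already handle the other cases, I would assume that both $OR$ and $OB$ are nonempty of size at most $C$, and that neither an injection $OR \to OB \setminus \{b\}$ nor $OB \to OR \setminus \{r\}$ exists. Adding the two failing strict inequalities $|OR| > |OB \setminus \{b\}|$ and $|OB| > |OR \setminus \{r\}|$ and case-splitting on whether $r \in OR$ and whether $b \in OB$ forces $r \in OR$, $b \in OB$, and $|OR| = |OB|$.

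The plan is to extend the matching argument of Lemma~\ref{lem:matching} by treating $v$ as a \emph{pseudo-obligate red} vertex. Concretely, set $T := (OR \cup \{v\}) \setminus \{r\}$; since $v$ is non-obligate we have $v \notin OR$, so $|T| = |OR| = |OB|$ and an injection $f: OB \to T$ exists. I would then follow the symmetric direction of the proof of Lemma~\ref{lem:matching} to build a spanning tree rooted at $r$: every non-obligate vertex outside $R \cup \{r, v\}$ is recolored so as to attach directly to $R$; each $u \in OB$ is recolored so that it is adjacent to its partner $f(u)$; and for each $f(u) \in OR \setminus \{r\}$ we invoke Corollary~\ref{cor:obligateredConnectsToR} to recolor $f(u)$ so that it retains a neighbor in $R$ outside of the reserved/recolored set.

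The only new situation is when $f(u) = v$ for some $u \in OB$. Sampling the colors of $u$ and $v$ independently from their marginals, we have $Pr[uv \in E(G)] \geq p' = p + \epsilon$ by the edge-density hypothesis, and $Pr_c[|N(v,c) \cap R| > C] \geq 1 - p$ by assumption. An inclusion--exclusion lower bound gives a joint probability of at least $p' + (1 - p) - 1 = \epsilon > 0$, so there exist colors for $u$ and $v$ under which $u$ is adjacent to $v$ and $v$ has strictly more than $C$ neighbors inside $R$. Since at most $C$ vertices are recolored during the whole procedure, the strict inequality $> C$ guarantees that $v$ still has a surviving neighbor in the unrecolored part of $R$, completing the path $u$--$v$--$R$--$r$.

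The main obstacle is the bookkeeping needed to couple all of these local color choices into a single consistent coloring of the graph; this is handled exactly as in Lemmas~\ref{lem:manyobligates} and~\ref{lem:matching} through the $C$-vertex buffer, which ensures that the $O(\log n)$ reserved or recolored vertices can never exhaust the robustly-large neighborhoods we rely on. The conceptual point is that a robust-connection probability of at least $1 - p$ is exactly the ingredient needed for $v$ to substitute for an obligate red vertex in the tree-rooted-at-$r$ strategy, even though $v$ itself is non-obligate.
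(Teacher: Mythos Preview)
Your proposal is correct and takes essentially the same approach as the paper. The only cosmetic difference is that you make $S=R$ the primary case and treat $v$ as a pseudo-obligate-red vertex (mapping $OB$ into $(OR\cup\{v\})\setminus\{r\}$), whereas the paper makes $S=B$ primary and treats $v$ as pseudo-obligate-blue (mapping $OR$ into $(OB\cup\{v\})\setminus\{b\}$); both note the other case is symmetric, and your derivation that the remaining situation forces $r\in OR$, $b\in OB$, $|OR|=|OB|$ matches the paper's.
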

\begin{proof}
    If the conditions of \Cref{lem:manyobligates} or \Cref{lem:matching} are fulfilled, we can connect the graph. Otherwise, $|OR|=|OB|\leq C$ and $r\in OR$ and $b\in OB$. Assume there is a vertex $v$ such that $Pr_c[|N(v,c)\cap B|>C]\geq 1-p$ (the case $S=R$ works symmetrically). We can make a bijective mapping from $OR$ to $(OB\cup\{v\})\setminus\{b\}$. Since we have that $Pr_c[N(v,c)\cap (B\setminus (OB\cup \{v\}\setminus \{b\}))\neq \varnothing]\geq 1-p$, the proof of \Cref{lem:matching} also works for this mapping.
\end{proof}

Furthermore, if there exists a vertex that can be used to join the red and blue sets, we can also connect the graph:

\begin{lemma}\label{lem:purplevertex}
    If there exists a vertex $v\not\in\{r,b\}$ and a color $c$, such that $N(v,c)\cap R\not=\varnothing$ and $N(v,c)\cap B\not=\varnothing$, we can connect the graph.
\end{lemma}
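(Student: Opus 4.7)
The plan is to recolor $v$ to $c$ and use it as a bridge between the two ``stars'' centered at $r$ (spanning $R\cup\{r\}$ via color $c_r$) and at $b$ (spanning $B\cup\{b\}$ via color $c_b$), and then to independently recolor every remaining vertex outside $R\cup B\cup\{r,b,v\}$ so that it attaches directly to one of these two stars.

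To begin, I would fix $r$'s color as $c_r$, $b$'s as $c_b$, $v$'s as $c$, and leave every vertex in $R\cup B$ with its color from the original random coloring. By definition of $R$ and $B$, the subgraphs on $R\cup\{r\}$ and $B\cup\{b\}$ are then stars and hence connected. By the hypothesis of the lemma there exist $r^*\in N(v,c)\cap R$ and $b^*\in N(v,c)\cap B$; since the graph has no loops, $r^*,b^*\neq v$, so the edges $vr^*$ and $vb^*$ are actually present and join the two stars. This shows connectivity of the induced subgraph on $R\cup B\cup\{r,b,v\}$ even in the corner case that $v$ itself lies in $R$ or $B$.

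It remains to choose a color for every other vertex $w\notin R\cup B\cup\{r,b,v\}$ so that $w$ is adjacent to some vertex of $R\cup B$. Taking $S=\varnothing$, \Cref{cor:obligateredConnectsToR} supplies, for each $w\in OR$, a color $c_w$ with $N(w,c_w)\cap R\neq\varnothing$; symmetrically, \Cref{cor:obligateblueConnectsToB} handles $w\in OB$ via $B$. For non-obligate $w$, the negation of the obligate-red condition gives a color $c_w$ with $|N(w,c_w)\cap B|>C\geq 1$, so $w$ has a neighbor in $B$. These choices are made independently across the different $w$'s, hence the resulting joint coloring occurs with positive probability, and by construction it makes the entire graph connected.

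The only subtlety to check—which I view as the main potential obstacle—is ensuring that recoloring $v$ (in the event that $v$ originally lay in $R$ or $B$) does not destroy connectivity of the star $R\cup\{r\}$ or $B\cup\{b\}$. This is handled because we never need the edge $rv$ or $bv$: the vertex $v$ joins the clusters through the distinct witnesses $r^*$ and $b^*$, while the edges from $r$ to $R\setminus\{v\}$ and from $b$ to $B\setminus\{v\}$ are unaffected, as none of those vertices get recolored.
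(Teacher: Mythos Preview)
Your argument is essentially correct but takes a more elaborate route than the paper, and it has one small slip. The paper's proof is a two-line pigeonhole: after recoloring $v$ to $c$, every remaining $w\notin R\cup B\cup\{r,b,v\}$ is recolored to any color with $|N(w,c_w)|>pn$ (guaranteed by \Cref{lem:condition}); since $\alpha,\beta>p$ gives $|R\cup B|=(\alpha+\beta)n>(1-p)n$, any such neighborhood must meet $R\cup B$. No obligate/non-obligate case split is made and \Cref{cor:obligateblueConnectsToB,cor:obligateredConnectsToR} are not invoked at all. Your approach works too, but it imports heavier machinery for a step the paper dispatches by counting.

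The slip: when you apply the corollaries with $S=\varnothing$ you only obtain $N(w,c_w)\cap R\neq\varnothing$ (respectively $B$). If $v$ lies in $R$ and happens to be the \emph{unique} vertex in that intersection, then recoloring $v$ to $c$ may destroy the edge $wv$, stranding $w$. Your closing paragraph addresses only the possible loss of the edges $rv$ and $bv$ inside the two stars, not the potential loss of edges $wv$ from the outside vertices. The repair is immediate---take $S=\{v\}$ (of size $1\le C$) in the corollaries---so the proof stands once patched.
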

\begin{proof}
    We give the vertex $v$ the color $c$. Every vertex $w\not\in R\cup B\cup\{r,b,v\}$ is recolored to some color $c_w$ such that $|N(w,c_w)|>pn$. Since $p>1-\alpha-\beta$, $w$ is then connected to some vertex in $R$ or $B$. After doing this for all $w$, the graph is connected.
\end{proof}

Finally, we will consider the cases which were not covered by the previous lemmata, for which we need the following lower bounds regarding non-obligate vertices.

\begin{lemma}\label{lem:bprimeu}
    For any non-obligate vertex $v\not\in\{r,b\}$, we define
    \begin{align*}
        p^r_v&:=\max_{c:|N(v,c)\cap B|\leq C}|N(v,c)|, \\
        p^b_v&:=\max_{c:|N(v,c)\cap R|\leq C}|N(v,c)|.
    \end{align*}
    Assuming there exists no vertex fulfilling the conditions of either \Cref{lem:vertexActingAsObligate} or \Cref{lem:purplevertex}, we have $p^r_v > \frac{\beta p - \beta + p}{p}n$, and $p^b_v > \frac{\alpha p - \alpha + p}{p}n$.
\end{lemma}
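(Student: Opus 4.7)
The plan is to bound $p^r_v$ by splitting the expectation $\E_c[|N(v,c)|] > (p+\eps/2)n$ from \Cref{lem:condition} according to whether the sampled color $c$ lies in the set $\mathcal{A}_R := \{c : |N(v,c)\cap B|\leq C\}$ over which $p^r_v$ is the maximum. The bound on $p^b_v$ will follow by an analogous argument with one crucial asymmetry. Throughout I assume $v \notin R\cup B \cup\{r,b\}$, which is the generic case.

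The key structural observation is the following: for any $c \notin \mathcal{A}_R$ we have $|N(v,c)\cap B| > C \geq 1$, hence $N(v,c)\cap B \neq \varnothing$. Since by assumption the hypothesis of \Cref{lem:purplevertex} fails for $v$, this forces $N(v,c)\cap R = \varnothing$; in particular $|N(v,c)\cap R|=0 \leq C$, so $(v,c)$ is a valid candidate in the definition of $(b,c_b)$, and the maximality of $(b,c_b)$ yields $|N(v,c)| \leq |N(b,c_b)| = \beta n$. Combining this with the trivial bound $|N(v,c)| \leq p^r_v$ on $\mathcal{A}_R$, and writing $s := Pr_c[c \notin \mathcal{A}_R]$, \Cref{lem:condition} gives
\[ (p+\eps/2)\,n < \E_c[|N(v,c)|] \leq p^r_v(1-s) + \beta n \cdot s. \]
Since the hypothesis of \Cref{lem:vertexActingAsObligate} also fails for $v$ with $S=B$, we have $s < 1-p$, so $1-s > p > 0$, and the above inequality rearranges to $p^r_v > g(s)\,n$, where $g(s) := (p+\eps/2 - \beta s)/(1-s)$.

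It only remains to check that $g(s) > \frac{\beta p - \beta + p}{p}$ for every $s \in [0,1-p)$. A short algebraic manipulation gives
\[ g(s) - \frac{\beta p - \beta + p}{p} = \frac{(\beta-p)(1-p-s) + p\eps/2}{p(1-s)}, \]
whose numerator is strictly positive since $\beta > p$ (by the earlier lemma), $s < 1-p$, and $\eps > 0$. Hence $p^r_v > \frac{\beta p - \beta + p}{p}\,n$, with a slack of order $\eps n$. The bound on $p^b_v$ is obtained by the symmetric argument with $\mathcal{A}_B := \{c : |N(v,c)\cap R|\leq C\}$ and $t := Pr_c[c \notin \mathcal{A}_B] < 1-p$, except that on $\mathcal{A}_B^c$ the negated \Cref{lem:purplevertex} only gives $N(v,c)\cap B = \varnothing$, which does \emph{not} make $(v,c)$ a candidate for $(b,c_b)$. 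The strongest available substitute is then the global bound $|N(v,c)| \leq \alpha n$ from the maximality of $(r,c_r)$; the same monotonicity computation, now using $\alpha > p$ in place of $\beta > p$, produces $p^b_v > \frac{\alpha p - \alpha + p}{p}\,n$.

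The main obstacle is precisely establishing the $\beta n$ bound on $\mathcal{A}_R^c$ rather than just the global $\alpha n$ bound. Without chaining the failure of \Cref{lem:purplevertex} into the maximality clause defining $(b,c_b)$, the same computation would yield only the weaker $p^b_v$-type bound $\frac{\alpha p - \alpha + p}{p}\,n$ for $p^r_v$, which, since $\alpha \geq \beta$, falls short of the target and would be insufficient for the subsequent case analysis in the proof of \Cref{thm:coloring_upper_n_infinity}.
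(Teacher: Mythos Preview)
Your proof is correct and follows essentially the same approach as the paper: split $\E_c[|N(v,c)|]$ according to whether $|N(v,c)\cap B|>C$ (resp.\ $|N(v,c)\cap R|>C$), bound the large-intersection part by $\beta n$ via the failure of \Cref{lem:purplevertex} feeding into the maximality of $(b,c_b)$ (resp.\ by the global $\alpha n$), and use the failure of \Cref{lem:vertexActingAsObligate} to cap that part's probability below $1-p$. Your $s,t$ are the paper's $z_v,q_v$; your explicit monotonicity check and the highlighted asymmetry between the two bounds are slightly more detailed than the paper's one-line rearrangement, and your caveat $v\notin R$ makes explicit an assumption the paper's proof uses implicitly when invoking the candidacy of $(v,c)$ for $(b,c_b)$.
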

\begin{proof}
    Let $v$ be an arbitrary non-obligate vertex, and let $q_v:=Pr_{c_v}[|N(v,c_v)\cap R|>C]$ and $z_v:=Pr_{c_v}[|N(v,c_v)\cap B|>C]$. Since $v$ does not fulfill the conditions of \Cref{lem:vertexActingAsObligate}, we have $q_v,z_v< 1-p$. We now look at the expected size of $N(v,c_v)$ when picking a random color $c_v$. First, we have $pn<\E_{c_v}[|N(v,c_v)|]$ by \Cref{lem:condition}.
    To obtain the bound on $p^b_v$, we can use that we also have 
    $$\E_{c_v}[|N(v,c_v)|]\leq q_v\alpha n + (1-q_v)p^b_v.$$
    This holds since every neighborhood has size at most $\alpha n$, and by definition of $p^b_v$, any neighborhood of $v$ intersecting $R$ in at most $C$ vertices has size at most $p^b_v$. Combining these bounds, we get $pn<q_v\alpha n+(1-q_v)p^b_v$, which implies
    \begin{gather*}
        p^b_v > \frac{p-q_v\alpha}{1-q_v}n=\left(\alpha-\frac{\alpha-p}{1-q_v}\right)n\geq \left(\alpha - \frac{\alpha-p}{p}\right)n=\frac{\alpha p -\alpha + p}{p}n,
    \end{gather*}
    as desired.

    To obtain the bound on $p^r_v$, we can use the bound $\E_{c_v}[|N(v,c_v)|]\leq z_v\beta n + (1-z_v)p^r_v$. Here, the part $z_v\beta$ follows from the fact that any neighborhood intersecting $B$ cannot intersect $R$ (since we assumed no vertex as in \Cref{lem:purplevertex}), and was thus considered when picking the pair $(b,c_b)$. Thus, we have $pn<z_v\beta n+ (1-z_v)p^r_v$, implying
    \begin{gather*}
        p^r_v > \frac{p - z_v\beta}{1-z_v}n=\left(\beta - \frac{\beta - p}{1-z_v}\right)n\geq \left(\beta - \frac{\beta - p}{p}\right)n=\frac{\beta p - \beta + p}{p}n,
    \end{gather*}
    finishing the proof.
\end{proof}

Finally, we cover the only remaining case.

\begin{lemma}\label{lem:everythingelse}
    If $|OB|=|OR|$, and there exists no vertex as in \Cref{lem:vertexActingAsObligate} or \Cref{lem:purplevertex}, we can connect the graph.
\end{lemma}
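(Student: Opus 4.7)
I plan to extend the matching argument from \Cref{lem:matching} by pairing up $OR\setminus\{r\}$ with $OB\setminus\{b\}$ and then adding a two-vertex bridge to connect $r$ to $b$. Write $k := |OR| = |OB| \geq 1$; since $r \in OR$ and $b \in OB$, both $OR\setminus\{r\}$ and $OB\setminus\{b\}$ have size $k-1$, so I fix any bijection $f\colon OR\setminus\{r\}\to OB\setminus\{b\}$. For each pair $(v,f(v))$, I apply the argument of \Cref{lem:matching} verbatim (a union bound combining \Cref{cor:obligateblueConnectsToB} with $Pr[vf(v)\in E]\ge p'$) to obtain colors so that $vf(v)\in E$ and $f(v)$ has a neighbor in $B$ disjoint from all previously used vertices. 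After processing the $k-1$ pairs, every vertex of $OR\cup OB$ other than $r$ lies in the component of $b$, while $r$'s component contains $R$.

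To merge these two components I aim to exhibit non-obligate $w_1,w_2\notin\{r,b\}$ and colors $c_1,c_2$ with (a) $N(w_1,c_1)\cap R\neq\varnothing$, (b) $N(w_2,c_2)\cap B\neq\varnothing$, and (c) $w_1 w_2 \in E$ under $(c_1,c_2)$, yielding the path $r{-}u_1{-}w_1{-}w_2{-}u_2{-}b$ with $u_1\in R$, $u_2\in B$. For any non-obligate $w$, the color $c^r_w$ that witnesses $p^r_w$ in \Cref{lem:bprimeu} has $|N(w,c^r_w)\cap B|\le C$, so after subtracting the middle region $V\setminus(R\cup B)$ of size $(1-\alpha-\beta)n$ and the slack $C$, one obtains $|N(w,c^r_w)\cap R| > (\alpha-\beta/\phi)n - C = \Omega(n)$; this uses $(1-p)/p=\phi$ from \Cref{lemma:nice-identities} together with $\alpha\ge\beta$. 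Symmetrically, $|N(w,c^b_w)\cap B|=\Omega(n)$. So conditions (a) and (b) hold automatically for any non-obligate choice equipped with its distinguished colors.

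The main obstacle is condition (c), since $c^r_{w_1}$ and $c^b_{w_2}$ are specific colors while the edge bound $Pr[w_1w_2\in E]\ge p'$ is merely marginal. I plan to handle this by an averaging argument over ordered pairs of non-obligate vertices. The relevant colors lie in the support of each vertex's color distribution and hence occur with strictly positive probability, and the no-purple constraint from \Cref{lem:purplevertex} plus the $p^r_w,p^b_w$ bounds rules out degenerate configurations in which an edge under the distinguished colors would simultaneously force large intersection with both $R$ and $B$. Summing $Pr[c(w_1)=c^r_{w_1}]\cdot Pr[c(w_2)=c^b_{w_2}]\cdot\mathbf{1}[w_1w_2\in E]$ over non-obligate pairs should then be comparable to the global marginal edge contribution and force at least one surviving pair.

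Finally, every non-obligate vertex $w$ not used in the bridge is recolored via \Cref{lem:condition} to a color with $|N(w,c)|>pn$; since $\alpha+\beta>2p>1-p$ (equivalently $3p>1$, which holds for $p=2-\phi$), any such neighborhood intersects $R\cup B$ and $w$ joins one of the two components. Combined with the bridge this produces a positive-probability connected outcome.
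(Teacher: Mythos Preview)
Your plan has a genuine gap at condition (c). The averaging quantity you propose,
\[
\sum_{w_1,w_2}\Pr[c(w_1)=c^r_{w_1}]\cdot \Pr[c(w_2)=c^b_{w_2}]\cdot \mathbf{1}[w_1w_2\in E\text{ under }(c^r_{w_1},c^b_{w_2})],
\]
cannot be bounded below by anything comparable to the global edge density, because the distinguished colors $c^r_{w_1}$ and $c^b_{w_2}$ are single points in the support and may carry arbitrarily small mass. Nothing you have assumed prevents an adversary from making the edge indicator vanish for \emph{every} pair under these specific colors while keeping all marginals at $p'$; the no-purple constraint only says that a single color cannot hit both $R$ and $B$, it does not force an edge between a ``red-facing'' color of one vertex and a ``blue-facing'' color of another. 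So the sum can be zero and no bridge exists. A secondary issue: your ``finally'' step recolors all non-obligate vertices, including those inside $R$ and $B$, which destroys the adjacencies $r$--$R$ and $b$--$B$ you are relying on; this is fixable by restricting the recoloring to $V\setminus(R\cup B)$, but it needs to be said.

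The paper avoids the bridge problem entirely. It takes a perfect matching between all of $OR$ and $OB$ (not just $OR\setminus\{r\}$ and $OB\setminus\{b\}$) and \emph{orients} each matched pair $r'b'$ according to which of $p_{r'\to b'}$, $p_{b'\to r'}$ is larger; since their product is at least $p'$, the larger one is at least $\sqrt{p'}$. This $\sqrt{p'}$ bound is exactly what is needed to union-bound against the $1-\sqrt{p}$ neighborhood guarantees of \Cref{lemma:neighborhood_size_guarantees}, parts 3 and 4 (note $\sqrt{p'}+(1-\sqrt{p})>1$). A case split on whether some matching edge points towards $OR$ or all point towards $OB$ then lets one obligate vertex play the role of a hub whose large neighborhood every non-obligate vertex can reach via \Cref{lem:bprimeu}; there is no need for a two-vertex bridge between $R$ and $B$.
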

\begin{proof}
    We consider the potential edges between $OR$ and $OB$. Each such edge $r'b'$ can be oriented in some direction, by considering the following two probabilities: 
    \begin{gather*}
    p_{r' \rightarrow b'}:=Pr_{c_{r'}}[\exists c_{b'} \text{ such that $r'b'\in E(G)$ in coloring $c(r')=c_{r'},c(b')=c_{b'}$}] \\
    p_{b' \rightarrow r'}:=Pr_{c_{b'}}[\exists c_{r'} \text{ such that $r'b'\in E(G)$ in coloring $c(r')=c_{r'},c(b')=c_{b'}$}]
    \end{gather*}
    Note that $p_{r' \rightarrow b'}\cdot p_{b' \rightarrow r'}\geq Pr[r'b'\in E(G)]\geq p'$, and thus $\max (p_{r' \rightarrow b'},p_{b' \rightarrow r'})\geq \sqrt{p'}$. We now direct the potential edge $r'b'$ from $r'$ to $b'$ if $p_{r'\rightarrow b'}>p_{b' \rightarrow r'}$, and from $b'$ to $r'$ otherwise.
    We pick an arbitrary perfect matching $M$ among the potential edges between $OR$ and $OB$, using $|OR|=|OB|$. 

    \begin{figure}[tb]
        \begin{subfigure}{0.49\textwidth}
        \includegraphics[keepaspectratio,width=\textwidth]{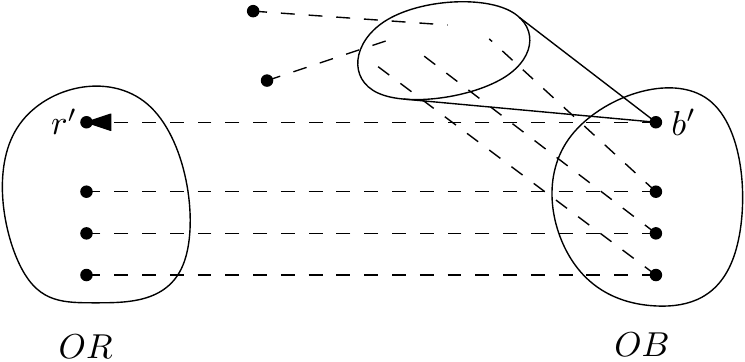}
        \caption{Case 1: At least one edge is oriented towards~$OR$.}
        \label{subfig:case1}
        \end{subfigure}
        \begin{subfigure}{0.49\textwidth}
        \includegraphics[keepaspectratio,width=\textwidth]{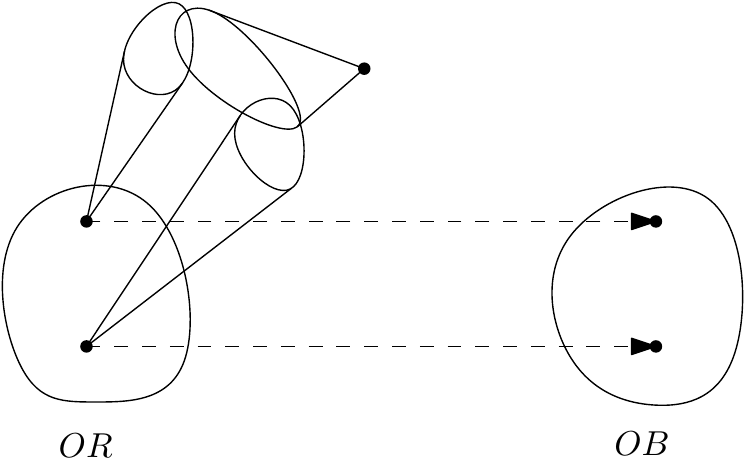}
        \caption{Case 2: All edges are oriented towards $OB$.}
        \label{subfig:case2}
        \end{subfigure}
        \caption{Connecting the graph in \Cref{lem:everythingelse}.}
        \label{fig:cases}
    \end{figure}
    
    \textbf{Case 1:} There exists an arc $(b',r')\in M$ directed towards $r'\in OR$.
    In this case we will connect the graph as shown in \Cref{subfig:case1}. Every obligate red vertex is connected to its matching obligate blue vertex. The vertex $b'$ must have a large neighborhood size that can be guaranteed with probability $\geq 1-\sqrt{p}$, so that it can simultaneously be connected to $r'$. Every obligate blue and every non-obligate vertex connects to this neighborhood.

    By \Cref{lemma:neighborhood_size_guarantees}, part 3, we have
    $$Pr[|N(b',c)|\geq (1-p)(1-\beta)n+\epsilon n/2]\geq 1-\sqrt{p}.$$
    Thus, with positive probability over the choice of color for $b'$, the size of the neighborhood of $b'$ is at least $(1-p)(1-\beta)n + \eps n/2$ and there is a choice of color for $r'$ such that $r'b'$ is an edge, since $1- \sqrt{p} + \sqrt{p'}>0$.

    Similarly, for all other obligate blue vertices $u$, by \Cref{lemma:neighborhood_size_guarantees}, part 1, we have
    $$Pr[|N(u,c)|\geq \frac{p+p\beta -\beta}{p}n+\epsilon n/2]\geq 1-p,$$
    so with positive probability there is an edge between $u$ and its matching vertex in $OR$ and $u$'s neighborhood has size at least $\frac{p+p\beta -\beta}{p}n+\epsilon n/2$.

    We need to prove that these guaranteed sizes of neighborhoods must intersect outside of $OB\cup OR$. Let $S:=(1-p)(1-\beta)n+\epsilon n/2$ and $T:=\frac{p+p\beta - \beta}{p}n+\epsilon n/2$. Since both neighborhoods can intersect $R$ in at most $C$ vertices, we need to prove that $(S-3C) + (T-3C) > n - |R|$, i.e.,
    \begin{equation}\label{eqn:compare1}
        S + T + \alpha n - n - 6C \overset{!}{>} 0. 
    \end{equation}

    Additionally, to guarantee that each non-obligate vertex $u$ outside of the neighborhood of $b'$ can connect to that neighborhood, we need to guarantee that $u$ has a neighborhood which intersects that of $b'$ outside $OB\cup OR$. For this, we need to check that $(S-3C) + (p^b_u-3C) > n - |R|$, i.e.,
    \begin{equation}\label{eqn:compare2}
    S + p^b_u + \alpha n- n - 6C \overset{!}{>} 0.
    \end{equation}

    Since our lower bound on $p^b_u$ is smaller than $T$ (as $\alpha\geq \beta$), (\ref{eqn:compare2}) implies (\ref{eqn:compare1}), so it suffices to show (\ref{eqn:compare2}).
    $$ S+p^b_u +\alpha n - n - 6C \geq (1-\beta)(1-p)n + \frac{p + \alpha p - \alpha }{p}n +\alpha n - 6C - n + \eps n / 2$$
    $$ = \left(-\beta - p + \beta p  + 1 + 2\alpha - \frac{\alpha}{p} - 6\frac{C}{n} + \eps /2 \right) n$$
    $$ = \left(1 - p - \alpha(1/p - 2) - \beta(1-p) - 6\frac{C}{n} + \eps / 2\right) n$$
    $$ = \left(1 - p - (\alpha + \beta)(1-p) - 6\frac{C}{n} + \eps / 2 \right)n$$
    $$ \geq \eps n / 2 - 6C,$$
    which for large enough $n$ is larger than $0$. Note that here we used \Cref{lemma:nice-identities}.
    
    \textbf{Case 2:} All edges in $M$ are oriented towards $OB$.
    In this case we will connect the graph as shown in \Cref{subfig:case2}. We give each obligate red vertex a large neighborhood such that we can still guarantee to be able to connect the obligate blue vertices to their matching partner. Then, we give each non-obligate vertex which is not yet connected directly to any of the obligate vertices a large neighborhood not intersecting $B$ (as guaranteed by \Cref{lem:bprimeu}). These neighborhoods must intersect all neighborhoods of the obligate red vertices due to their sizes, thus the graph is connected.
    
    Let $Y:=(1-p)(1-\alpha)n + \eps n/2$. For any obligate red vertex $v$, by \Cref{lemma:neighborhood_size_guarantees}, part 4, we have $Pr[|N(v,c_v)|\geq Y]\geq 1-\sqrt{p}$, so with probability $1-\sqrt{p} + \sqrt{p'}>0$ there is a color for $v$'s obligate blue partner vertex so that they are connected by an edge and also $|N(v,c_v)|\geq Y$.
    
    We now check that every non-obligate vertex $u$ has a neighborhood which intersects with every neighborhood of size at least $Y$ of the obligate red vertices. Recall that by \Cref{lem:bprimeu}, every $u$ has a neighborhood not robustly intersecting $B$ of size $p^r_u$. Note that we need to show that these neighborhoods intersect outside of $OR\cup OB$ since these vertices may change their colors. To this end, it suffices to show that $(p^r_u-3C) + (Y-3C) > n - |B|$, i.e.,
    $$p^r_u + Y + \beta n - n - 6C \overset{!}{>} 0.$$
    We compute
    $$p^r_u + Y + \beta n - n - 6C \geq \left(\frac{\beta p - \beta + p}{p} + (1-\alpha)(1-p) + \beta - 1 -6\frac{C}{n} + \eps/2\right)n$$
    $$= \left(\beta - \frac{\beta}{p} + 1 - \alpha - p + \alpha p + \beta - 6\frac{C}{n} + \eps/2\right)n$$
    $$ = \left( 1 - p + \beta ( 2 - 1/p) - \alpha (1 - p) - 6\frac{C}{n} + \eps /2\right)n$$
    $$ = \left( 1 - p - (\alpha + \beta)(1-p) - 6\frac{C}{n}+ \eps /2\right)n$$
    $$ \geq \eps n/2 - 6C,$$
    which for large enough $n$ is larger than $0$. \qedhere
\end{proof}

We have covered all possible cases and can thus now prove \Cref{thm:coloring_upper_n_infinity}.

\begin{proof}[Proof of \Cref{thm:coloring_upper_n_infinity}]
    \Cref{lem:noobligates,lem:manyobligates,lem:matching,lem:vertexActingAsObligate,lem:purplevertex,lem:everythingelse} cover all possible cases. Thus, we can always connect the graph, proving the theorem.
\end{proof}

\begin{proof}[Proof of \Cref{thm:main}]
    \Cref{thm:main} follows from \Cref{lem:pairwise_lower,lem:pairwise_upper,lem:edge-subgraph_upper,lem:subgraph_lower,lem:subgraph_upper,lem:coloring_two_colors,lem:coloring_lower,lem:coloring_upper_one_half}, \Cref{cor:matching_bounds,cor:edge-subgraph_lower}, and \Cref{thm:coloring_upper_n_infinity}.
\end{proof}

\clearpage
\bibliographystyle{plainurl}
\bibliography{literature}

\clearpage
\appendix
\section{\texorpdfstring{$G_{n,1/2}$ with Side Constraints}{Gn1/2 with Side Constraints}}\label{app:GnpSideConstraints}

\begin{definition}
    For an integer $k$, let $G^k_{n,1/2}$ be the probability distribution on graphs on $n$ vertices obtained by conditioning $G_{n,1/2}$ on the event that the number of edges in the resulting graph is divisible by $k$.
\end{definition}
Note that, since $G_{n,1/2}$ is the uniform distribution over all graphs on $n$ vertices, $G^k_{n,1/2}$ is the uniform distribution over all graphs on $n$ vertices with number of edges divisible by $k$. We will show that $G^2_{n,1/2}$ is a coloring model, whereas $G^3_{n,1/2}$ is not even pairwise independent.

\begin{lemma}
For any $n \in \mathbb{N}$, the distribution $G^2_{n,1/2}$ is a coloring model.    
\end{lemma}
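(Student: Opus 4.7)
The plan is to exhibit an explicit coloring model realizing $G^2_{n,1/2}$. For each vertex $v$, let $\Omega_v := \{c \in \mathbb{F}_2^{V \setminus \{v\}} : \sum_u c^u \equiv 0 \pmod 2\}$ (the even-weight bit-vectors indexed by the other vertices), with the uniform probability measure, and for each edge $\{u,v\}$ define $f_{u,v}(c_u,c_v) := c_u^v \oplus c_v^u$. Since each $f_{u,v}$ is $\mathbb{F}_2$-linear in the sampled bits, the whole map $\Phi : \prod_v \Omega_v \to \mathbb{F}_2^{\binom{n}{2}}$, $(c_v)_v \mapsto (X_e)_e$, is linear. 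The plan is to show that $\mathrm{Im}(\Phi)$ equals the even-parity hyperplane $H := \{X \in \mathbb{F}_2^{\binom{n}{2}} : \bigoplus_e X_e = 0\}$; linearity then forces the pushforward of the uniform distribution on $\prod_v \Omega_v$ to be uniform on $H$, which is by definition $G^2_{n,1/2}$.

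The inclusion $\mathrm{Im}(\Phi) \subseteq H$ is a quick direct computation: $\bigoplus_{\{u,v\}}(c_u^v \oplus c_v^u) = \bigoplus_v \bigoplus_{u \neq v} c_v^u = \bigoplus_v 0 = 0$, using the even-weight constraint at each vertex. The harder step is the reverse inclusion. Given any target $X^* \in H$, I would pick the ``upper-triangular'' bits $a_{uv} := c_u^v$ for $u < v$ freely, and set $c_v^u := X^*_{uv} \oplus a_{uv}$ for $u < v$; by construction this choice automatically realizes $X^*$ on every edge, so the only remaining condition is that each resulting $c_v$ has even weight. A short calculation rewrites these per-vertex conditions as a linear system $A\,a = r(X^*)$, where $A \in \mathbb{F}_2^{n \times \binom{n}{2}}$ is the vertex-edge incidence matrix of $K_n$ over $\mathbb{F}_2$ and $r(X^*) \in \mathbb{F}_2^n$ has $v$-th coordinate $\bigoplus_{u < v} X^*_{uv}$.

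The main obstacle is then verifying solvability of this system for every $X^* \in H$. The key input is the standard fact that the $\mathbb{F}_2$-incidence matrix of a connected graph has rank $|V| - 1$, with image exactly the even-sum hyperplane $\{r \in \mathbb{F}_2^n : \sum_v r_v \equiv 0 \pmod 2\}$; since $K_n$ is connected, solvability therefore reduces to checking $\sum_v r_v(X^*) \equiv 0 \pmod 2$. A one-line calculation identifies this sum with $\bigoplus_e X^*_e$, which vanishes because $X^* \in H$, so the system is solvable and $\Phi$ is surjective onto $H$. By linearity of $\Phi$, every $X \in H$ then has the same number $|\ker(\Phi)| = 2^{\binom{n-1}{2}}$ of preimages, so the uniform distribution on $\prod_v \Omega_v$ pushes forward to the uniform distribution on $H$, i.e.\ $G^2_{n,1/2}$, completing the proof.
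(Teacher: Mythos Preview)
Your proof is correct and uses exactly the same coloring model as the paper: each vertex samples a uniform even-weight bit-vector indexed by the other vertices, and the edge $\{u,v\}$ is present iff the two cross-bits XOR to $1$. The only difference is in how you establish surjectivity and equiprobability: the paper does this by a direct greedy count (fixing colors vertex by vertex and counting $2^{(n-1)(n-2)/2}$ preimages per even-edge graph), whereas you recast everything linear-algebraically, reducing surjectivity to solvability of $Aa = r(X^*)$ with $A$ the $\mathbb{F}_2$-incidence matrix of $K_n$ and then invoking linearity of $\Phi$ to get equal fiber sizes. Both arguments are short; yours is slightly more conceptual and makes the role of the parity constraint $\bigoplus_e X^*_e = 0$ explicit via the standard $\operatorname{rank}(A)=n-1$ fact, while the paper's is a touch more elementary.
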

\begin{proof}
    We prove this statement by providing a coloring model and showing that it is equivalent to $G^2_{n,1/2}$.
    For every vertex $v$, sample a string $s_v \in \{0,1\}^{n-1}$ uniformly at random among all strings with evenly many $1$ bits. The string $s_v$ is the color of $v$. Each bit of $s_v$ corresponds to a distinct other vertex. For vertex $u\neq v$, we will denote by $s_v(u)$ the bit of $s_v$ that corresponds to vertex $u$. The edge $\{u,v\}$ is then present in our graph if and only if $s_v(u) + s_u(v) = 1$. 

    We first show that the graph sampled in this way always has an even number of edges. Note that $\{u,v\}$ is present if and only if $s_v(u) + s_u(v) \equiv 1 \mod 2$. Thus, the parity of the total number of edges is
    $$ \sum_{u \neq v} \big(s_v(u) + s_u(v)\big) \mod 2 = \sum_{v} \sum_{u \neq v} s_v(u) \mod 2 = \sum_u 0 \mod 2 = 0 \mod 2,$$
    as required.

    To see that each graph on evenly many edges is equiprobable, notice that every valid coloring of the vertices is equiprobable, and that each graph on evenly many edges can arise from precisely $ 2^{(n-2)(n-1)/2}$ many different colorings of the vertices. For the latter, consider some graph $G$ on evenly many edges, and let its vertices be $v_1, \dots, v_n$. Recall that $X_{\{u,v\}}$ is the indicator variable that $\{u,v\} \in E(G)$. Notice that in general for any two vertices $u \neq v$, both choices $0$ and $1$ for $s_u(v)$ are possible, but each such choice determines $s_v(u)$ by $s_v(u) = X_{\{u,v\}} - s_u(v) \mod 2$. Thus, there are $2^{n-2}$ valid choices for $s_{v_1}$, each of which is consistent with $G$, because there are $2^{n-2}$ many strings on $n-1$ bits with an even number of $1$ bits. Fixing a color $s_{v_1}$ for $v_1$, this determines uniquely $s_{v_i}(v_1)$ for every $i \in \{2,\dots,n\}$, as noted above. Thus, there are $2^{n-3}$ valid choices for $s_{v_2}$, each of which fixes $s_{v_i}(v_2)$ for each $i \geq 3$. Continuing in the same way, there are $2^{n-i-1}$ valid choices for $s_{v_i}$ for each $i \in [n-1]$ and only one valid choice for $s_{v_n}$. All in all, the number of colorings that produce $G$ is
    $$ \prod_{i=1}^{n-2} 2^i = 2^{\sum_{i=1}^{n-2} i} = 2^{(n-2)(n-1)/2},$$
    which does not depend on $G$, as required.
\end{proof}

\begin{lemma}
    Let $n \geq 4$. Then the distribution $G^3_{n,1/2}$ is not pairwise independent.
\end{lemma}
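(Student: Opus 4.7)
The plan is to exhibit a pair of vertex-disjoint edges $e,f$ (which exist since $n \geq 4$) and compute $Pr[X_e]$ and $Pr[X_e \wedge X_f]$ in closed form via a roots-of-unity filter, then verify directly that the product rule fails. Setting $m := \binom{n}{2}$, since $G^3_{n,1/2}$ is uniform on graphs with edge count divisible by $3$, both probabilities reduce to ratios of sums of the form $T_r^{(M)} := \sum_{k \equiv r \pmod{3}} \binom{M}{k}$: counting the graphs containing $e$ (respectively both $e$ and $f$) by their total edge count gives $Pr[X_e] = T_2^{(m-1)}/T_0^{(m)}$ and $Pr[X_e \wedge X_f] = T_1^{(m-2)}/T_0^{(m)}$ (the shifts in $r$ come from reindexing $k \mapsto k-t$ for $t=1,2$).

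Using $\omega = e^{2\pi i/3}$ together with $1+\omega = e^{i\pi/3}$ and $1+\omega^2 = e^{-i\pi/3}$, the standard roots-of-unity identity yields the closed form
\[T_r^{(M)} = \tfrac{1}{3}\!\left[\,2^M + 2\cos\!\left(\tfrac{\pi(M-2r)}{3}\right)\right].\]
Pairwise independence is equivalent to $Pr[X_e \wedge X_f] = Pr[X_e]^2$, which, after clearing denominators, amounts to the vanishing of $\Delta(m) := T_0^{(m)} T_1^{(m-2)} - (T_2^{(m-1)})^2$. Each factor has the form $\tfrac{1}{3}(2^M + \alpha)$ with $\alpha \in \{-2,-1,1,2\}$ depending only on $M \bmod 6$, so expanding cancels the leading $2^{2m-2}$ term, and $9\Delta(m)$ collapses to an expression of shape $\beta_1 \, 2^m + \beta_2 \, 2^{m-2} + \beta_3$ whose coefficients depend only on $m \bmod 6$.

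To finish, I would run a short case analysis over the six residues of $m$ modulo $6$. Tabulating the values of $2\cos(\pi k/3) \in \{-2,-1,1,2\}$ (which is $6$-periodic), in every case the constant term works out to $-3$ and the exponential part collapses to $\pm 3 \cdot 2^{m-1}$ or $\pm 3 \cdot 2^{m-2}$, giving the uniform bound $|9\Delta(m)| \geq 3 \cdot 2^{m-2} - 3$. This is strictly positive for all $m \geq 3$, and in particular for $m = \binom{n}{2} \geq 6$, so $\Delta(m) \neq 0$ and pairwise independence fails. The main obstacle is simply the bookkeeping of the six subcases; once $2\cos(\pi k/3)$ is tabulated, each becomes a one-line substitution.
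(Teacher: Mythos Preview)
Your argument is correct. The roots-of-unity computation checks out: writing $a=2\cos(\pi m/3)$, $b=2\cos(\pi(m-4)/3)$, $c=2\cos(\pi(m-5)/3)$, one gets $9\Delta(m)=(b-c)2^m+a\cdot 2^{m-2}+(ab-c^2)$, and tabulating over $m\bmod 6$ indeed gives constant term $-3$ and exponential part $\pm 3\cdot 2^{m-1}$ or $\pm 3\cdot 2^{m-2}$ in every case, so $|9\Delta(m)|\ge 3\cdot 2^{m-2}-3>0$ for $m\ge 3$.

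The paper takes a different and somewhat slicker route. Instead of comparing $Pr[X_e\wedge X_f]$ with $Pr[X_e]^2$, it compares the two conditional probabilities $Pr[X_e\mid X_f]$ and $Pr[X_e\mid \overline{X_f}]$; this has the pleasant effect that the resulting identity involves only the counts $|S^r_{m-2}|$ at a \emph{single} level $m-2$, and reduces to $|S^1_{m-2}|\,|S^0_{m-2}|\neq |S^2_{m-2}|^2$. The paper then avoids any closed-form evaluation: the bit-flip map $s\mapsto \bar s$ gives a bijection identifying two of the three counts (which two depends only on $m-2\bmod 3$), and since $|S^0_{m-2}|+|S^1_{m-2}|+|S^2_{m-2}|=2^{m-2}$ is not divisible by $3$, the three cannot all be equal, so the remaining inequality is immediate. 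Your approach trades that bijective trick for an explicit exact value of $\Delta(m)$; it is more computational and requires a six-case check, but it is perfectly valid and has the advantage of producing a quantitative gap rather than just nonvanishing.
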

\begin{proof}
    Consider two vertex-disjoint edges $e$ and $f$. It suffices to show that 
    $$ Pr[X_e|X_f] \neq Pr[X_e|\overline{X_f}],$$
    as this would imply that
    $$ Pr[X_e] = Pr[X_e|X_f] Pr[X_f] + Pr[X_e|\overline{X_f}] Pr[\overline{X_f}] \neq Pr[X_e|X_f].$$
    Let $G \sim G^3_{n,1/2}$ and let $s \in \{0,1\}^{\binom{n}{2}}$ describe the edges in $G$. In particular, choose some order $e_1, \dots, e_m$ on all $m = \binom{n}{2}$ potential edges of $G$ such that $e_1 = f$ and $e_2=e$, and let $s_i = 1$ iff $e_i \in E(G)$. Denote by $w(s)$ the number of $1$ bits in the string $s$, and for any $r \in \{0,1,2\}$ and $k$, let $$S^r_k = \{s \in \{0,1\}^k | w(s) \equiv r \mod 3\}.$$ Then, sampling $G \sim G^3_{n,1/2}$ is equivalent to sampling $s$ uniformly at random from $S^0_m$. We have
    $$ Pr[X_e | X_f] = \frac{Pr[X_e \textrm{ and } X_f]}{ Pr[X_f]} = \frac{\lvert \{s \in S^0_m | s_1 = 1 \textrm{ and } s_2 = 1 \} \rvert }{\lvert \{s \in S^0_m | s_1 = 1 \} \rvert} $$ $$=\frac{\lvert \{s \in S^0_m | s_1 = 1 \textrm{ and } s_2 = 1 \} \rvert }{\lvert \{s \in S^0_m | s_1 = 1 \textrm{ and } s_2 = 1\} \rvert + \lvert \{s \in S^0_m | s_1 = 1 \textrm{ and } s_2 = 0\} \rvert} = \frac{|S^1_{m-2}|}{|S^1_{m-2}| + |S^2_{m-2}|} $$
    and 
    $$ Pr[X_e | \overline{X_f}] = \frac{Pr[X_e \textrm{ and } \overline{X_f}]}{Pr[\overline{X_f}]} = \frac{\lvert \{s \in S^0_m | s_1 = 0 \textrm{ and } s_2 = 1 \} \rvert }{\lvert \{s \in S^0_m | s_1 = 0 \} \rvert} $$
    $$ = \frac{\lvert \{s \in S^0_m | s_1 = 0 \textrm{ and } s_2 = 1 \} \rvert }{\lvert \{s \in S^0_m | s_1 = 0 \textrm{ and } s_2 = 1 \} \rvert + \lvert \{s \in S^0_m | s_1 = 0 \textrm{ and } s_2 = 0 \} \rvert} = \frac{|S^2_{m-2}|}{|S^2_{m-2}| + |S^0_{m-2}|}.$$
    Thus, it suffices to show that 
    $$ \frac{|S^1_{m-2}|}{|S^1_{m-2}| + |S^2_{m-2}|} \overset{!}{\neq} \frac{|S^2_{m-2}|}{|S^2_{m-2}| + |S^0_{m-2}|} $$
    $$ |S^1_{m-2}| |S^2_{m-2}| + |S^1_{m-2}| |S^0_{m-2}| \overset{!}{\neq} |S^2_{m-2}| |S^1_{m-2}| + |S^2_{m-2}| |S^2_{m-2}|  $$
    \begin{align}
    \label{eq:enough_to_show_mod_3}
    |S^1_{m-2}| |S^0_{m-2}| \overset{!}{\neq}|S^2_{m-2}| |S^2_{m-2}|.
    \end{align}
    Note that $|S^0_{m-2}| + |S^1_{m-2}| + |S^2_{m-2}| = 2^{m-2} \neq 0 \mod 3$, implying that it cannot be the case that $|S^0_{m-2}| = |S^1_{m-2}| = |S^2_{m-2}|$.

    If $n$ or $n-1$ is divisible by $3$, then so is $m=\binom{n}{2}$, and so $m-2 \equiv 1 \mod 3$. The remaining case is that $n \equiv 2 \mod 3$, which implies that $m=\binom{n}{2} \equiv 1 \mod 3$, so $m-2 \equiv 2 \mod 3$.

    If $m-2 \equiv 1 \mod 3$, there is a bijection $f:S^0_{m-2} \rightarrow S^1_{m-2}$ obtained by flipping all the bits, that is, for $s \in S^0_{m-2}$, we have $\big(f(s)\big)_i = 1-s_i$ for each $i \in [m-2]$. Thus $|S^0_{m-2}| = |S^1_{m-2}|$ and so \Cref{eq:enough_to_show_mod_3} holds as long as $|S^1_{m-2}| \neq |S^2_{m-2}|$, which we know to be the case since not all $|S^r_{m-2}|$ can be the same.

    Similarly, if $m-2 \equiv 2 \mod 3$, there is a bijection from $S^0_{m-2}$ to $S^2_{m-2}$ obtained by flipping all the bits. So $|S^0_{m-2}| = |S^2_{m-2}|$ and again \Cref{eq:enough_to_show_mod_3} holds as long as $|S^1_{m-2}| \neq |S^2_{m-2}|$, which again has to be the case.
\end{proof}

\section{Proof of Lemma \ref{lem:convexcombinations}}\label{app:convexcombproof}

\begin{proof}[Proof of \Cref{lem:convexcombinations}]
    Let $F:= \alpha \cdot C + (1-\alpha) \cdot D$ be the convex combination of $C$ and $D$ we are considering.
    Note first that for any edge $e$ we have $Pr_F[X_e] = \alpha Pr_C[X_e] + (1-\alpha) Pr_D[X_e] = Pr_C[X_e] = Pr_D[X_e]$.

    Suppose first that $C$ and $D$ are pairwise independent. Consider a pair of non-adjacent edges $e,f$. We have
    $$ Pr_F[X_e \textrm{ and } X_f] = \alpha Pr_C[X_e \textrm{ and }X_f] + (1-\alpha)Pr_D[X_e \textrm{ and } X_f] $$
    $$ = \alpha Pr_C[X_e] Pr_C[X_f] + (1-\alpha) Pr_D[X_e] Pr_D[X_f] = Pr_F[X_e] Pr_F[X_f],$$
    where the second equality follows from the pairwise independence of $C$ and $D$, and the third one follows by the fact that $Pr_F[X_e]=Pr_D[X_e]=Pr_C[X_e]$ for every edge $e$, as noted above. This shows that $F$ is also pairwise independent.

    Now suppose that $C$ and $D$ are both matching independent. To show that $F$ is also matching independent, it suffices to show that for any matching $M$ consisting of vertex-disjoint edges $e_1, \dots, e_k$ and any subset $S \subseteq M$, we have $Pr_F[X_S \textrm{ and } Y_{M \setminus S}] = \prod_{e \in S} Pr_F[X_e] \prod_{e \in M \setminus S} Pr_F[\overline{X_e}]$, where $X_S$ is the event that all the edges in $S$ are present and $Y_{M \setminus S}$ is the event that none of the edges in $M\setminus S$ are present. We have
    $$ Pr_F[X_S \textrm{ and } Y_{M\setminus S}] = \alpha Pr_C[X_S \textrm{ and } Y_{M\setminus S}] + (1-\alpha) Pr_D[X_S \textrm{ and } Y_{M\setminus S}] $$
    $$ = \alpha \prod_{e \in S} Pr_C[X_e] \prod_{e \in M \setminus S} Pr_C[\overline{X_e}] + (1-\alpha) \prod_{e \in S} Pr_D[X_e] \prod_{e \in M \setminus S} Pr_D[\overline{X_e}]$$
    $$ \prod_{e \in S} Pr_F[X_e] \prod_{e \in M \setminus S} Pr_F[\overline{X_e}],$$
    where again we used the matching independence of $C$ and $D$ and the fact that $Pr_F[X_e] = Pr_D[X_e] = Pr_C[X_e]$.

    Finally, suppose $C$ and $D$ are edge-subgraph independent. Let $e=\{u,v\}$ be an edge and $W$ be a vertex set with $u,v \notin W$, and let $H_W$ be a choice for the induced subgraph on $W$. We will show that $F$ is edge-subgraph independent as well, that is,
    $$ Pr_F[G[W] = H_W \textrm{ and } X_e] = Pr_F[G[W] = H_W] Pr_F[X_e].$$
    We have
    $$ Pr_F[G[W] = H_W \textrm{ and } X_e] = \alpha Pr_C[G[W] = H_W \textrm{ and } X_e] + (1-\alpha) Pr_D[G[W] = H_W \textrm{ and } X_e] $$
    $$ = \alpha Pr_C[G[W] = H_W] Pr_C[X_e] + (1-\alpha) Pr_D[G[W] = H_W] Pr_D[X_e] $$
    $$ = \Big(\alpha Pr_C[G[W] = H_W] + (1-\alpha) Pr_D[G[W] = H_W]\Big) Pr_F[X_e] = Pr_F[G[W] = H_W] Pr_F[X_e],$$
    as required, using the edge-subgraph independence of $C$ and $D$.
\end{proof}

\section{\texorpdfstring{Proof of \Cref{claim:CMmodel}}{Proof of Claim \ref{claim:CMmodel}}}\label{app:CModelProof}
\begin{proof}[Proof of \Cref{claim:CMmodel}]
    We fix some potential edge $e=\{u,v\}$. We then have
    $$ p = Pr[X_e] = q \frac{n-2}{n} + (1-q)\frac{1}{n-1},$$
    since in the clique regime, $e$ is not present only if $u$ or $v$ is chosen as the isolated vertex $x$, and in the matching regime, there are $n-1$ choices for a vertex for $u$ to be matched with, each occurring with the same probability. Now consider two potential vertex-disjoint edges $e=\{u,v\}$ and $f = \{w,z\}$. Similarly, we have 
    $$ Pr[X_e \textrm{ and } X_f] = q \frac{n-4}{n} + (1-q) \frac{1}{n-1}\frac{1}{n-3},$$
    where the last term comes from the fact that in the matching regime, conditioned on $u$ and $v$ being matched, the probability of $w$ and $z$ being matched is $\frac{1}{n-3}$.

    In order to have pairwise independence, it suffices to find $q(n)$ such that $Pr[X_e \textrm{ and } X_f] = Pr[X_e] Pr[X_f]$, that is
    \begin{align*}
        \Big( q \frac{n-2}{n} + (1-q)\frac{1}{n-1} \Big)^2 &\overset{!}{=} q \frac{n-4}{n} + (1-q) \frac{1}{n-1}\frac{1}{n-3} \\
        \Bigg( q \Big(\frac{n-2}{n} - \frac{1}{n-1}\Big) + \frac{1}{n-1} \Bigg)^2 &\overset{!}{=} q \Big( \frac{n-4}{n} - \frac{1}{n-1}\frac{1}{n-3} \Big) + \frac{1}{n-1}\frac{1}{n-3}
    \end{align*}
    \begin{multline*}
    q^2 \Big(\frac{n-2}{n} - \frac{1}{n-1}\Big)^2 + 2q\Big( \frac{n-2}{n} - \frac{1}{n-1} \Big)\frac{1}{n-1} + \frac{1}{(n-1)^2} \\
    \overset{!}{=} q \Big( \frac{n-4}{n} - \frac{1}{n-1}\frac{1}{n-3} \Big) + \frac{1}{n-1}\frac{1}{n-3}      
    \end{multline*}

    $$ q^2 \Big(\frac{n^2 - 4n + 2}{n(n-1)}\Big)^2 + q\frac{-n^4 + 11n^3 - 40n^2 + 58n - 24}{n(n-1)^2(n-3)} - \frac{2}{(n-1)^2(n-3)} \overset{!}{=} 0$$
    $$ q^2 (n^5 - 11n^4 +44n^3 - 76n^2 + 52n - 12) + q(-n^5 + 11n^4 - 40n^3 + 58n^2 - 24n) - 2n^2 \overset{!}{=} 0.$$
    Solving the quadratic equation for $q$ with
    \begin{align*}
        a &= n^5 - 11n^4 +44n^3 - 76n^2 + 52n - 12 \\
        b &= -n^5 + 11n^4 - 40n^3 + 58n^2 - 24n \\
        c &= - 2n^2,
    \end{align*}
    we get
    $$ q_{\pm} = \frac{-b \pm \sqrt{b^2 - 4ac}}{2a} $$
    $$ =\frac{n^5 - 11n^4 + 40n^3 - 58n^2 + 24n\pm \sqrt{n^{10} - 22 n^9 + 201 n^8 - 988 n^7 + \Theta(n^6)}}{2n^5 - 22n^4 + 88n^3 - 152n^2 + 104n - 24}. $$
    Consider
    \begin{align*}
        q_+ &=\frac{n^5 - 11n^4 + 40n^3 - 58n^2 + 24n + \sqrt{n^{10} - 22 n^9 + 201 n^8 - 988 n^7 + \Theta(n^6)}}{2n^5 - 22n^4 + 88n^3 - 152n^2 + 104n - 24} \\
        &= \frac{1}{2} - \frac{2}{n^2} + \Theta\Big(\frac{1}{n^3}\Big) + \frac{\sqrt{(n^5 - 11n^4 + 40n^3)^2 -108n^7 + \Theta(n^6)}}{2n^5 - 22n^4 + 88n^3 - 152n^2 + 104n - 24} \\
        &\leq \frac{1}{2} - \frac{2}{n^2} + \Theta\Big(\frac{1}{n^3}\Big) + \frac{1}{2} - \frac{2}{n^2} + \Theta\Big(\frac{1}{n^3}\Big) = 1 - \frac{4}{n^2} + \Theta\Big(\frac{1}{n^3}\Big).
    \end{align*}
    Note also that 
    \begin{align*}
    q_+ &=\frac{n^5 - 11n^4 + 40n^3 - 58n^2 + 24n + \sqrt{n^{10} - 22 n^9 + 201 n^8 - 988 n^7 + \Theta(n^6)}}{2n^5 - 22n^4 + 88n^3 - 152n^2 + 104n - 24}\\
    &= \frac{1}{2} - \frac{2}{n^2} + \Theta\Big(\frac{1}{n^3}\Big) + \frac{\sqrt{(n^5 - 11n^4 + 40n^3-200n^2)^2 + 292n^7 + \Theta(n^6)}}{2n^5 - 22n^4 + 88n^3 - 152n^2 + 104n - 24} \\
    &\geq \frac{1}{2} - \frac{2}{n^2} + \Theta\Big(\frac{1}{n^3}\Big) + \frac{1}{2} - \frac{2}{n^2} + \Theta\Big(\frac{1}{n^3}\Big) = 1 - \frac{4}{n^2} + \Theta\Big(\frac{1}{n^3}\Big).
    \end{align*}
    This shows that for some $q$ with $q = 1 - \frac{4}{n^2} + \Theta\Big(\frac{1}{n^3}\Big)$, the distribution $CM(n,q)$ is well-defined and pairwise independent. Then
    $$ p = Pr[X_e] = q \frac{n-2}{n} + (1-q) \frac{1}{n-1}$$
    $$ = 1 - \Theta \Big( \frac{1}{n^2} \Big) - \frac{2}{n} + \Theta \Big( \frac{1}{n^3} \Big) = 1 - \frac{2}{n} - \Theta \Big( \frac{1}{n^2} \Big),$$
    as required.
    
    Finally, we show that for this $q$, we have that $CM(n,q)$ is not matching independent. For this, consider a matching of three edges $e,f,g$. We have
    $$ Pr[X_e \textrm{ and } X_f \textrm{ and } X_g] = q \frac{n-6}{n} + (1-q)\frac{1}{n-1} \frac{1}{n-3} \frac{1}{n-5} $$
    $$= 1 - \frac{6}{n} - \frac{4}{n^2} + \Theta \Big( \frac{1}{n^3}\Big).$$
    In order to have matching independence, this expression must equal
    $$ Pr[X_e] Pr[X_f] Pr[X_g] = \Bigg( q\frac{n-2}{n} + (1-q)\frac{1}{n-1} \Bigg)^3 $$
    $$ = \Big(1-\frac{2}{n}\Big)^3 \Big(1- \frac{4}{n^2}\Big)^3 + \Theta \Big( \frac{1}{n^3}\Big) = \Big(1-\frac{6}{n} + \frac{12}{n^2}\Big) \Big(1- \frac{12}{n^2}\Big) + \Theta \Big( \frac{1}{n^3}\Big) $$
    $$ = 1 - \frac{6}{n} + \Theta\Big( \frac{1}{n^3} \Big) > 1 - \frac{6}{n} - \frac{4}{n^2} + \Theta \Big( \frac{1}{n^3}\Big) = Pr[X_e \textrm{ and } X_f \textrm{ and } X_g],$$
    thus showing that $CM(n,q)$ for this choice of $q$ is not matching independent. 
\end{proof}

\section{\texorpdfstring{Proof of \Cref{lemma:neighborhood_size_guarantees}}{Proof of Lemma \ref{lemma:neighborhood_size_guarantees}}}\label{app:calculations}
\begin{proof}[Proof of \Cref{lemma:neighborhood_size_guarantees}]
For part 1 of the lemma, we determine the maximum neighborhood size $T$ of each obligate blue vertex $u$ we can guarantee to occur with probability at least $1-p$. The upper bound on the size of the neighborhood is $\beta n$ and the expected size is at least $(p+\eps/2) n$. Thus, by \Cref{lem:markovderivate}, for every $T$ such that
\begin{equation}
\frac{pn + \eps n /2 - T}{\beta n - T}\geq 1-p \label{eqn:Tneeded1}
\end{equation}
we get 
\[Pr[|N(u,c_{u})|>T]\geq 1-p.\]
We solve \Cref{eqn:Tneeded1} for $T$:
   \begin{align*}
        T&\leq \frac{pn + \eps n/2-\beta n(1-p)}{p} \\
        T&\leq \frac{p-\beta + \beta p}{p}n + \frac{\eps n/2 }{p}
    \end{align*}
In particular we can thus guarantee a neighborhood of size $\frac{p-\beta + \beta p}{p}n + \eps n/2 < \frac{p-\beta + \beta p}{p}n + \frac{\eps n}{2p}$ with probability at least $1-p$.
We can show part 2 of the lemma analogously to part 1, replacing $\beta$ with~$\alpha$.

For part 3 of the lemma, we once again use \Cref{lem:markovderivate} to determine the maximum neighborhood size $S$ of $u$ we can guarantee to occur with probability at least $1-\sqrt{p}$. We can use the upper bound of $\beta n$ since $u$ is obligate blue, and the expectation is at least $(p+\eps/2)n$. Thus again, we get
\[Pr[|N(u,c_u)|>S]\geq 1-\sqrt{p} \]
for every $S$ such that
\[\frac{pn + \eps n/2 - S}{\beta n - S}\geq 1-\sqrt{p}, \]
or equivalently
\[S\leq \frac{p n + \eps n /2 -\beta n(1-\sqrt{p})}{\sqrt{p}}\]
\[S\leq \frac{p(1-\beta)n + \eps n / 2}{1-p} = (1-p)(1-\beta)n + \frac{\eps n/2}{1-p},\]
where we used that $\sqrt{p} = 1-p$ by \Cref{lemma:nice-identities}.
In particular we can thus guarantee a neighborhood of size $(1-p)(1-\beta)n+\eps n/2<(1-p)(1-\beta)n+\frac{\eps n/2}{1-p}$ with probability at least $1-\sqrt{p}$. Finally, part 4 is shown analogously to part 3, replacing $\beta$ with $\alpha$.    
\end{proof}

\section{Finitely Many Colors Suffice for the Coloring Model}\label{app:finitely_many_colors}
\begin{proof}[Proof of \Cref{lem:finitely_many_colors}]
Let $\calD$ be a random graph distribution given as a coloring model. We will reduce the number of colors for each vertex one by one, preserving the probability distribution~$\calD$. By a slight abuse of notation, here we refer to any outcome of the experiment at some vertex as its \emph{color}, even if there are infinitely many of them. Suppose we’re considering vertex $v$ and for each graph $H$ on $n$ vertices and color $c$ of $v$, let $p_{H,c}$ be the probability that $H$ is sampled from $\calD$ conditioned on vertex $v$ having color $c$, and let $p_H$ be the unconditional probability of $H$ being sampled from $\calD$. Now for each color $c$ of $v$, consider the vector $\vec{p}_{*,c}$ in $2^{\binom{n}{2}}$-dimensional space that consists of all the $p_{H,c}$ for all possible graphs $H$ on $n$ vertices in some canonical order. Consider also the vector $\vec{p}_*$ of the same dimension with the $p_H$ for all possible graphs $H$ on $n$ vertices as entries in the same canonical order. By the law of total probability, $\vec{p}_*$ is contained in the convex hull of all the $\vec{p}_{*,c}$ vectors (of which there may be infinitely many). It follows by Carathéodory’s theorem~\cite{caratheodory1907variabilitatsbereich} that $\vec{p}_*$ is a convex combination of some $2^{\binom{n}{2}}+1$ many vectors among $\{\vec{p}_{*,c}\;|\;c\text{ is a color of vertex }v\}$. Thus we can pick the corresponding $2^{\binom{n}{2}} +1$ colors for $v$, use the coefficients given by that convex combination as probabilities for these colors in the experiment at vertex $v$, and we end up with the same distribution $\calD$. We repeat this process for each vertex, completing the proof.
\end{proof}

\end{document}